\renewcommand{\objectstyle}{\displaystyle}
\newcommand{\defaultDimension}{n}
\newcommand{\defaultAmbientSpace}{X}
\newcommand{\defaultlcIndex}{\sigma}
\newcommand{\defaultcohDegree}{q}
\newcommand{\defaultlclocus}{D}
\newcommand{\defaultvphi}{\vphi_F}
\newcommand{\setDefaultvphi}[1]{\renewcommand{\defaultvphi}{#1}}
\newcommand{\defaultpsi}{\psi_D}
\newcommand{\defaultMetric}{\omega}
\newcommand{\alert}[2][RoyalBlue]{{\color{#1}#2}}
\NewDocumentCommand{\logKX}{
  t{M} %% #1 include M in the tensor product if present
  o    %% #2 replace F \otimes M by the argument when provided
}{K_X \otimes D \otimes \IfNoValueTF{#2}{F \IfBooleanT{#1}{\otimes M}}{#2}}
\NewDocumentCommand{\Ltwo}{ %% the space of L2 sections 
  D//{\bullet,\bullet}      %% #1 the order of forms
  D<>{\defaultAmbientSpace} %% #2 base space
  s                         %% #3 base space is hidden if * is present
  m                         %% #4 coefficient
}{L^{#1}_{(2)}\paren{\IfBooleanF{#3}{#2;} #4}}
\NewDocumentCommand{\Harm}{ %% the space of harmonic forms
  t{'}                      %% #1 no preassigned hol degree if present
  D//{\defaultcohDegree}    %% #2 anti-holomorphic degree
  D<>{\defaultAmbientSpace} %% #3 the base space
  g                         %% #4 the coefficient; will be hidden
                            %%    together with the base space if not provided
  t{,}                      %% #5 separator
  G{\defaultvphi}           %% #6 potential on line bundle
  e{_}                      %% #7 metric on the base space or other subscripts
}{\mathcal{H}^{\IfBooleanF{#1}{\defaultDimension,}#2}\IfNoValueF{#4}{\paren{#3;#4}}_{#6 \IfNoValueF{#7}{,#7}}}
\NewDocumentCommand{\lcIndex}{ %% for displaying the lc index,
                               %% intended to be used internally
  m  %% #1 the basic lc index (\sigma)
  m  %% #2 amount added to the index
  m  %% #3 amount substracted from the index
}{#1\IfNoValueF{#2}{+#2}\IfNoValueF{#3}{-#3}}
\NewDocumentCommand{\lcData}{ %% for displaying lc data in the format
                              %% like "(\vphi_L ; m_k . \psi)"
  G{\defaultvphi}  %% #1 potential or q-psh function
  O{\defaultpsi}   %% #2 lc locus psi function
  e{.}             %% #3 jumping number
}{\paren{#1; \IfNoValueF{#3}{#3 \cdot} #2}}
\NewDocumentCommand{\lcdata}{ %% for displaying lc data in the
                              %% format like "(X,\vphi_L,\psi,m_k)"
  s                %% #1 no parentheses if starred 
  d<>              %% #2 base space
  G{\defaultvphi}  %% #3 potential or q-psh function
  O{\defaultpsi}   %% #4 lc locus psi function
  e{.,}            %% #5 jumping number
                   %% #6 extra components
}{\newcommand{\datalist}{\IfNoValueF{#2}{#2,}#3,#4\IfNoValueF{#5}{,#5}\IfNoValueF{#6}{,#6}}
\IfBooleanTF{#1}{\datalist}{\paren{\datalist}}}
\newcommand{\spHbase}{\mathbb{H}}
\NewDocumentCommand{\spH}{ %% cohomology group with coefficients 
                           %% K_X +F +D \otimes the given sheaf
  D//{\defaultcohDegree}  %% #1 degree of anti-hol form
  t{M}                    %% #2 with 'M' to display M in the %% coefficient
  m                       %% #3 
}{\spHbase^{#1}\paren{\IfBooleanT{#2}{M\otimes}#3}}
\DeclareMathOperator{\lc}{lc} %% lc centre
\NewDocumentCommand{\lcc}{ %% union of lc centres
                           %% of codimension \sigma
                           %% of (X,D) %%
  D||{\defaultlcIndex}       %% #1 lc index \sigma
  e{+-}                      %% #2,#3
  D<>{\defaultAmbientSpace}  %% #4 base space
  t{'}                       %% #5 '-ed to show lc locus instead of
                             %%    lc data pair
  D(){\defaultlclocus}       %% #6 lc locus 
}{\lc_{#4}^{\lcIndex{#1}{#2}{#3}}\IfBooleanTF{#5}{\paren{#6}}{\lcData}}
\NewDocumentCommand{\lcS}{  %% a local lc centre
  s                       %% #1 symbol with \rs when starred
  D(){\defaultlclocus}    %% #2 symbol for the subvariety
  D||{\defaultlcIndex}    %% #3 codimension
  e{+-}                   %% #4,#5
  d<>                     %% #6 open set where the lc centre lives
  O{p}                    %% #7 index among the \sigma-lc centres
}{\mathtt{\IfBooleanT{#1}{\rs} #2}^{\lcIndex{#3}{#4}{#5}}_{\IfNoValueF{#6}{#6,}#7}}
\NewDocumentCommand{\PRes}{ %% Poincare Residue map
  O{}      %% subvariety
  d()      %% forms from the domain
}{\mathcal R_{#1}\IfNoValueF{#2}{\paren{#2}}}
\NewDocumentCommand{\HRes}{ %% Harmonic residue
  d()   %% #1 harmonic form
}{\mathfrak{R}\IfNoValueF{#1}{\paren{#1}}}
\newcommand{\defidlof}[1]{\mathcal{I}_{#1}}  %% defining ideal of (a set)
\NewDocumentCommand{\mtidlof}{   %% multiplier ideal of (a potential)
  O{}      %% #1 base space (for compatibility)
  D<>{#1}  %% #2 base space
  m        %% #3 potential / psh function
}{\multidl_{#2}\paren{#3}} 
\NewDocumentCommand{\residlof}{  %% multiplier ideal sheaf on the
                                 %% union of \sigma-lc centres
  D||{\defaultlcIndex}   %% #1 codim of lc centres or supporting lc
                         %%    locus
  e{+-}                  %% #2,#3
  d<>                    %% #4 base space
  s                      %% #5 display the symbol without arguments when starred
  %%% input to \lcData
  % G{\defaultvphi}      %% #6 potential or q-psh function
  % O{\defaultpsi}       %% #7 lc locus psi function
  % e{.}                 %% #8 jumping number  
}{\sheaf R_{\IfNoValueTF{#4}{}{#4,} \lcIndex{#1}{#2}{#3}}\IfBooleanF{#5}{\lcData}}
\NewDocumentCommand{\Adjidlof}{
  D||{\defaultlcIndex}       %% #1 codim of lc centres under concern
  D<>{\defaultAmbientSpace}  %% #2 base space
  D(){\defaultlclocus}       %% #3 lc locus
  m                          %% #4 potential or ideal
}{\operatorname{\mathit{Adj}}^{#1}_{\paren{#2,#3}}\paren{#4}}
\NewDocumentCommand{\aidlof}{
  D||{\defaultlcIndex}   %% #1 codim of lc centres under concern
  e{+-}                  %% #2,#3
  d<>                    %% #4 base space
  s                      %% #5 display the symbol without arguments when starred
  %%% input to \lcData
  % G{\defaultvphi}        %% #6 potential or ideal
  % O{\defaultpsi}         %% #7 defining function of the lc locus
  % e{.}                   %% #8 jumping number
}{\sheaf{J}_{\!\IfNoValueTF{#4}{}{#4,} \lcIndex{#1}{#2}{#3}}\IfBooleanF{#5}{\lcData}}
\NewDocumentCommand{\faidlof}{
  D||{\defaultlcIndex}   %% #1 codim of lc centres in numerator
  e{+-}                  %% #2,#3
  t{/}                   %% #4 a separator for arguments
  D||{\defaultlcIndex}   %% #5 codim of lc centres in denominator
  e{+-}                  %% #6,#7
  % d<>                    %% #8 base space
  % s                      %% #9 display the symbol without arguments when starred
  %%% input to \lcData
  % G{\defaultvphi}        %% #10 potential or ideal
  % O{\defaultpsi}         %% #11 defining function of the lc locus
  % e{.}                   %% #12 jumping number
}{\fracAidlof{\lcIndex{#1}{#2}{#3}}{\lcIndex{#5}{#6}{#7}}}
\NewDocumentCommand{\fracAidlof}{
  m                  %% #1 lcIndex in numerator
  m                  %% #2 lcIndex in denominator
  d<>                %% #3 base space
  s                  %% #4 display the symbol without arguments when starred
  G{\defaultvphi}    %% #5 potential or ideal
  O{\defaultpsi}     %% #6 defining function of the lc locus
  e{.}               %% #7 jumping number
}{\frac{
    \aidlof|#1|<#3>*\IfBooleanF{#4}{\lcData{#5}[#6].{#7}}
  }{
    \aidlof|#2|<#3>*\IfBooleanF{#4}{\lcData{#5}[#6].{#7}}
  }}
\NewDocumentCommand{\lcV}{ %% measure on lc centres
  D||{\defaultlcIndex}    %% #1 codim of supporting lc centres
  D//{\defaultvphi}       %% #2 potential for bundle valued section
  d()                     %% #3 metric on the ambient space
  e{^}                    %% #4 jumping number
  O{\defaultpsi}          %% #5 defining function (potential) of subvariety 
}{\:d\operatorname{lcv}^{#1\IfNoValueF{#4}{,\paren{#4}}}_{\IfNoValueF{#3}{#3,}#2}\left[#5\right]}
\NewDocumentCommand{\Ohvol}{ %% Ohsawa measure %%
  D//{\defaultvphi} %% #1 potential for bundle valued section
  d()               %% #2 metric on the ambient space
  O{\defaultpsi}    %% #3 defining function of subvariety
}{\dvol_{\IfNoValueF{#2}{#2,}#1}\left[#3\right]}
\newcommand{\dvol}{\:d\vol}
\NewDocumentCommand{\lcDataNormSubscript}{
  %% for displaying lc data in the format
  %% like "X, \vphi_L , m_k.\psi, \sigma", which is mainly used for
  %% subscript in a norm
  d<>                   %% #1 base space
  s                     %% #2 no potential and psi function when starred
  G{\defaultvphi}       %% #3 potential or q-psh function
  O{\defaultpsi}        %% #4 lc locus psi function
  e{.}                  %% #5 jumping number
  D||{\defaultlcIndex}  %% #6 lc Index
  e{+-}                 %% #7,#8
}{\IfNoValueF{#1}{#1,}
  \IfBooleanF{#2}{#3, \IfNoValueF{#5}{#5 \cdot} #4,}
  \lcIndex{#6}{#7}{#8}}
\newcommand{\RTFsym}{\mathfrak{F}} 
\NewDocumentCommand{\RTF}{ %% residue transform function
  s          %% #1 adding \smash[t] when starred
  G{\RTFsym} %% #2 symbol body
  o          %% #3 general superscript
  >{\SplitArgument{1}{,}} d<> %% #4 superscript in inner product
  d||        %% #5 superscript in \abs{}^2
  D(){\eps}  %% #6 for adding variable (\eps)
  t{,}       %% #7 separator
}{%
  \begingroup%
    \newif\ifsmasht%
    \IfBooleanTF{#1}{\smashttrue}{\smashtfalse}%
    \newif\ifboolup%
    \booluptrue%
    \IfNoValueT{#3}{\IfNoValueT{#4}{\IfNoValueT{#5}{\boolupfalse}}}%
    \newcommand{\supsrptstr}{\IfNoValueF{#3}{#3}\IfNoValueF{#4}{\inner#4}\IfNoValueF{#5}{\abs{#5}^2}}
    \newcommand{\RTFvar}{#6}
    #2\RTFprocess
}
\NewDocumentCommand{\RTFprocess}{
  o                     %% #1 overwrite subscript if given
  d<>                   %% #2 base space
  t{,}                  %% #3 with potential and psi function when ,-ed
  G{\defaultvphi}       %% #4 potential or q-psh function
  O{\defaultpsi}        %% #5 lc locus psi function
  e{.}                  %% #6 jumping number
  D||{\defaultlcIndex}  %% #7 lc Index
  e{+-}                 %% #8,#9
}{\newcommand{\subsrptstr}{%
    \IfNoValueTF{#1}{
    \IfNoValueF{#2}{#2,}
    \IfBooleanT{#3}{#4,#5,\IfNoValueF{#6}{#6,}}
    \lcIndex{#7}{#8}{#9}}{#1}}%
  \newcommand{\srptstr}{\cramped{{}^{\supsrptstr}%
      \ifboolup _
      \fi{\ifboolup\displaystyle\fi\paren{\RTFvar}%
          \ifboolup {\scriptstyle \subsrptstr} \else _{\subsrptstr} \fi%
        }}}%
  \ifboolup%
    \ifsmasht%
      \smash[t]{
        \raisebox{\depthof{$\srptstr$} * \real{0.3}}{$\srptstr$}%
      }%
    \else%
      \raisebox{\depthof{$\srptstr$} * \real{0.3}}{$\srptstr$}%
    \fi%
  \else%
    \srptstr%
  \fi%
  \endgroup%
}
\NewDocumentCommand{\mtlog}{O{e} d() D||{\defaultpsi}}{\log\!#1^{\paren{#2}}\abs{#3}}
\NewDocumentCommand{\slog}{O{e} D||{\defaultpsi}}{\log\abs{#1 #2}}
\NewDocumentCommand{\dlog}{O{e} D||{\defaultpsi}}{\mtlog[#1](2)|#2|}
\NewDocumentCommand{\logpole}{ %% log-pole in the residue transform
                               %% function
  D||{\defaultpsi}       %% #1 log singularity defining function
  D//{\defaultlcIndex}   %% #2 codim of lc centres in question
  E{.^}{{e}{1+\eps}}     %% #3 multiplicative constant in logarithm 
                         %% #4 exponent in the log-psi term
  s                      %% #5 no parentheses and exponent on log|\psi| when starred
}{\abs{#1}^{#2} \IfBooleanTF{#5}{\slog[#3]|#1|}{\paren{\slog[#3]|#1|}^{#4}}}
\DeclareFontFamily{OMX}{MnSymbolE}{}
\DeclareSymbolFont{MnLargeSymbols}{OMX}{MnSymbolE}{m}{n}
\DeclareFontShape{OMX}{MnSymbolE}{m}{n}{
    <-6>  MnSymbolE5
   <6-7>  MnSymbolE6
   <7-8>  MnSymbolE7
   <8-9>  MnSymbolE8
   <9-10> MnSymbolE9
  <10-12> MnSymbolE10
  <12->   MnSymbolE12
}{}
\DeclareFontShape{OMX}{MnSymbolE}{b}{n}{
    <-6>  MnSymbolE-Bold5
   <6-7>  MnSymbolE-Bold6
   <7-8>  MnSymbolE-Bold7
   <8-9>  MnSymbolE-Bold8
   <9-10> MnSymbolE-Bold9
  <10-12> MnSymbolE-Bold10
  <12->   MnSymbolE-Bold12
}{}
\DeclareMathDelimiter{\llangle}{\mathopen}%
{MnLargeSymbols}{'164}{MnLargeSymbols}{'164}
\DeclareMathDelimiter{\rrangle}{\mathclose}%
{MnLargeSymbols}{'171}{MnLargeSymbols}{'171}
\newcommand{\iinner}[2]{\left\llangle#1,#2\right\rrangle}
\newcommand{\eqcls}[1]{\left[#1\right]}
\NewDocumentCommand{\idxup}{ %% operator for raising indices via a
                             %% hermitian metric on X
  m                  %% #1 the differential form whose indices to be raised
  O{\defaultMetric}  %% #2 the hermitian metric on X
  t{,}               %% #3 separator
  o                  %% #4 extra superscripts
  s                  %% #5 smash the vertical spacing on the top of the metric if present
  t{.}               %% #6 with contraction operator \ctrt if '.'-ed
}{\paren{#1}^{
    % \mathrlap{
    \!\IfBooleanTF{#5}{\smash[t]{#2}}{#2}\IfNoValueF{#4}{, #4}
    % }
    % \makebox[\maxof{\widthof{$#2$}-\widthof{$\!\omega$}}{0pt}]{}
  }\IfBooleanT{#6}{\!\!\ctrt}}
\newcommand{\dbadj}{\dbar^{\smash{\mathrlap{*}\;\:}}}
\NewDocumentCommand{\dep}{t{;} d<> O{\nu} m}{#4\IfBooleanTF{#1}{_}{^}{\IfNoValueF{#2}{#2\:}(#3)}}
\NewDocumentCommand{\sm}{s m}{{#2}\IfBooleanTF{#1}{_}{^}\text{sm}}
\NewDocumentCommand{\idx}{ %% multi-indices
  O{i} %% #1 symbol of the indices
  m    %% #2 starting subscript
  o    %% #3 additional stuff to add before \dotsm
  t{.} %% #4 display "\dotsm" if '.'-ed
  t{,} %% #5 display ",\dots," if ','-ed
  o    %% #6 additional stuff to add after \dotsm
  m    %% #7 ending subscript
}{{#1}_{#2} \IfNoValueF{#3}{#3}
  \IfBooleanT{#4}{\dotsm} \IfBooleanT{#5}{,\dots,}
  \IfNoValueF{#6}{#6} {#1}_{#7}}
\newcommand{\charfct}{\mathbbm 1}
\newcommand{\cvr}[1]{\mathfrak{#1}} %% set of covering subsets
\NewDocumentCommand{\rs}{ %% putting ~ on objects on the
                          %% log-resolution %%
  s  %% when * is given, \smash[t] is applied
  m  %% the main object 
}{\IfBooleanTF{#1}{\smash[t]{\widetilde{#2}}}{\widetilde{#2}}}
\DeclareMathOperator{\mlc}{mlc} %% minimal lc centre
\newcommand{\Diff}{\operatorname{Diff}^*} %% general different (adjunction formula)
\newcommand{\sect}[1][s]{\mathtt{#1}} %% canonical section
\NewDocumentCommand{\cbn}{  %% group of combinations
  D//{\defaultlcIndex_V}
  D||{\defaultlcIndex}
}{\mathfrak{C}^{#1}_{#2}} 
\NewDocumentCommand{\Iset}{  %% index set for lc centres on log-resolution
  D||{\defaultlcIndex}    %% #1
  e{+-}                   %% #2,#3
  O{\defaultlclocus}      %% #4
  d()                     %% #5 open set on which the index set is valid
}{I^{\lcIndex{#1}{#2}{#3}}_{#4}\IfNoValueF{#5}{\paren{#5}}} 
  \newtheorem{THMprop}{Proposition}[subsection]
  \newtheorem{THMlemma}[THMprop]{Lemma}
  \newtheorem{THMthm}[THMprop]{Theorem}
  \newtheorem{THMcor}[THMprop]{Corollary}
  \newtheorem{THMconjecture}[THMprop]{Conjecture}
  \newtheorem*{THMclaim}{Claim}
  \def\makeparenother{\catcode`\(=12 \catcode`\)=12 }
  \def\makeparenactive{\catcode`\(=\active\catcode`\)=\active}
  \NewDocumentEnvironment{textupparenenvir}{}{
    %%%%% This code may cause error when parentheses appear in places
    %%%%% where macro is not accepted, like \ref{...} or optional
    %%%%% arguments of enumerate. 
    % \catcode1=12
    % \catcode2=12
    % \mathcode1=\the\mathcode`\(
    % \delcode1=\the\delcode`\(
    % \mathcode2=\the\mathcode`\)
    % \delcode2=\the\delcode`\)

    % \begingroup
    % \lccode`\~=`\^^A
    % \lowercase{\endgroup
    % \everymath\expandafter{\the\everymath\let(^^28\let)^^29}
    % \everydisplay\expandafter{\the\everydisplay\let(^^28\let)^^29}
    % }

    \everymath\expandafter{\makeparenother}
    \everydisplay\expandafter{\makeparenother}

    \def({\textup{\char`\(}}
    \def){\textup{\char`\)}}

    \makeparenactive
  }{\makeparenother}
  \NewDocumentEnvironment{prop}{ +o }{
    \IfNoValueTF{#1}{\begin{THMprop}}{\begin{THMprop}[{#1}]}
      \begin{textupparenenvir}
  }{
      \end{textupparenenvir}
    \end{THMprop}
  }
  \NewDocumentEnvironment{lemma}{ +o }{
    \IfNoValueTF{#1}{\begin{THMlemma}}{\begin{THMlemma}[{#1}]}
      \begin{textupparenenvir}
  }{
      \end{textupparenenvir}
    \end{THMlemma}
  }
  \NewDocumentEnvironment{thm}{ +o }{
    \IfNoValueTF{#1}{\begin{THMthm}}{\begin{THMthm}[{#1}]}
      \begin{textupparenenvir}
  }{
      \end{textupparenenvir}
    \end{THMthm}
  }
  \NewDocumentEnvironment{cor}{ +o }{
    \IfNoValueTF{#1}{\begin{THMcor}}{\begin{THMcor}[{#1}]}
      \begin{textupparenenvir}
  }{
      \end{textupparenenvir}
    \end{THMcor}
  }
  \NewDocumentEnvironment{conjecture}{ +o }{
    \IfNoValueTF{#1}{\begin{THMconjecture}}{\begin{THMconjecture}[{#1}]}
      \begin{textupparenenvir}
  }{
      \end{textupparenenvir}
    \end{THMconjecture}
  }
  \NewDocumentEnvironment{claim}{ +o }{
    \IfNoValueTF{#1}{\begin{THMclaim}}{\begin{THMclaim}[{#1}]}
      \begin{textupparenenvir}
  }{
      \end{textupparenenvir}
    \end{THMclaim}
  }
  \theoremstyle{remark}
  \newtheorem{remark}[THMprop]{Remark}
  \theoremstyle{definition}
  \numberwithin{equation}{subsection}
  \renewcommand{\ibar}{{\raisebox{-0.9ex}{$\mathchar'26$}\mkern-6.7mu i}}
\begin{document}

\citealias{Amb03}{Ambro_quasi-log-var}
\citealias{Amb14}{Ambro_injectivity}
\citealias{Eno90}{Enoki}
\citealias{EV92}{Esnault&Viehweg_book}
\citealias{Fuj11}{Fujino_log-MMP}
\citealias{Fuj12b}{Fujino_vanishing-thms}
\citealias{Fuj13a}{Fujino_injectivity-II}
\citealias{Fuj13b}{Fujino_injectivity-hodge-theoretic}
\citealias{Fuj15b}{Fujino_survey}

%%%%%
%%%%% File name  : titleinfo.tex
%%%%% Author     : Mario Chan
%%%%% Date       : 25th July, 202 (original: 13th December, 2021 (original: 04th November, 2020))
%%%%% Description: This file contains the info needed for maketitle
%%%%%              for the project "Injectivity-Fujino".
%%%%%
%%
%%%
\newcommand{\titlestr}{%
  % (Provisional)
  % A solution to the Fujino conjecture: injectivity theorem for
  % log-canonical pairs \\ on compact K\"ahler manifolds%
  An injectivity theorem on snc compact K\"ahler spaces: \\
  an application of the theory of
  harmonic integrals on log-canonical centers via adjoint ideal
  sheaves%
}

\newcommand{\shorttitlestr}{%
  An injectivity theorem on snc spaces%
}

\newcommand{\MCname}{Tsz On Mario Chan}
\newcommand{\MCnameshort}{Mario Chan}
\newcommand{\MCemail}{mariochan@pusan.ac.kr}

\newcommand{\YJname}{Young-Jun Choi}
\newcommand{\YJnameshort}{Young-Jun Choi}
\newcommand{\YJemail}{youngjun.choi@pusan.ac.kr}

\newcommand{\PNUAddressstr}{%
  Dept.~of Mathematics and Inst.~of Mathematical Science, Pusan National
  University, Busan 46241, South Korea%
}

\newcommand{\ShMname}{Shin-ichi Matsumura}
\newcommand{\ShMnameshort}{Shin-ichi Matsumura}
\newcommand{\ShMemail}{mshinichi0@gmail.com, mshinichi-math@tohoku.ac.jp}

\newcommand{\TohokuAddressstr}{%
  Mathematical Institute, Tohoku University, 6-3, Aramaki Aza-Aoba,
  Aoba-ku, Sendai 980-8578, Japan%
}

\newcommand{\subjclassstr}[1][,]{%
  32J25 (primary)#1  %% Transcendental methods of algebraic geometry (complex-analytic aspects) 
  32Q15#1   %% 	Kähler manifolds
  14B05 (secondary)%   %% Singularities in algebraic geometry
  % 14E30 (secondary)%   %% Minimal model program (Mori theory, extremal rays)
}

\newcommand{\keywordstr}[1][,]{%
  $L^2$ injectivity#1
  adjoint ideal sheaf#1
  multiplier ideal sheaf#1
  log-canonical center%
}

\newcommand{\dedicatorystr}{%
}

\newcommand{\thankstr}{%
}

%%% Local Variables:
%%% mode: latex
%%% TeX-master: "Injectivity-Fujino"
%%% coding: utf-8
%%% End:

\title[\shorttitlestr]{\titlestr}
 
\author[\MCnameshort]{\MCname}
\email{\MCemail}
% \address{\addressstr}
% \curraddr{}

\author{\YJname}
\email{\YJemail}
\address{\PNUAddressstr}

\author{\ShMname}
\email{\ShMemail}
\address{\TohokuAddressstr}

% \thanks{\thankstr}
 
\subjclass[2020]{\subjclassstr}

\keywords{\keywordstr}

% \dedicatory{\dedicatorystr}

% \begin{abstract}
%   \input{abstract}
% \end{abstract} 

%%%SM's notation: I will change them to Mario's notation later 
% \newcommand{\Ker}[0]{\operatorname{Ker}}
\let\Ker\ker
\newtheorem{step}{Step}
%%%

%%choiyj's macros
\def\del{\partial}
\def\we{\wedge}
\def\ov{\overline}
\newcommand{\pd}[2]{\frac{\partial#1}{\partial#2}}

\date{\today} 

\maketitle

%%%%% End of Top matter %%%%%%%%%%

\section{Introduction}\label{sec:intro}

{
  \let\thesubsection\thesection
  
  % This paper studies an analytic aspect of higher cohomology groups of adjoint bundles for lc $($log canonical$)$ pairs
  % aiming to solve Fujino's conjecture on the injectivity theorem as a benchmark. 
  This paper studies an analytic aspect of higher cohomology groups of adjoint bundles
  for log-canonical (lc) pairs aiming to solve Fujino's conjecture, 
  the injectivity theorem for lc pairs on compact K\"ahler manifolds, 
  following the line of Enoki's proof. 
  This is achieved by developing the theory of harmonic integrals
  on lc centers using the analytic adjoint ideal sheaves and the
  associated residue techniques.

  The injectivity theorem, a generalization of the Kodaira vanishing theorem to semi-positive line bundles, 
  plays an important role in higher dimensional algebraic geometry. 
  After the original Koll\'ar's injectivity theorem \cite{Kollar_injectivity} had been proved 
  for semi-ample line bundles on smooth projective varieties, 
  Enoki \cite{Eno90} generalized Koll\'ar's injectivity theorem 
  to semi-positive line bundles on compact K\"ahler manifolds. 
  Koll\'ar's proof is based on theory of Hodge structures, whereas
  Enoki's proof is based on the theory of harmonic integrals, a more
  well-suited and flexible technique in the complex analytic situation. 

  Ambro and Fujino generalized Koll\'ar's theory to varieties with lc
  singularities via the theory of mixed Hodge structures,  
  motivated by applications to birational geometry (see \cite{Amb03, Amb14, EV92, Fuj11, Fuj12b, Fuj13b}). 
  % \mariocomment{To SM: please
  % check if these references links to the correct papers. Change the
  % $\backslash$\texttt{citealias} commands if you wish.}% 
  % The works of Ambro and Fujino can be expected to
  It is expected that their works can also be generalized in the same line as Enoki's
  by developing an analytic treatment to lc singularities. 
  Motivated by this expectation, Fujino posed the conjecture below. 
  (Set $\ibar := \ibardefn$ \ibarfootnote\ and let $D$ be a reduced divisor for the
  rest of this article.)

  \begin{conjecture}[{Fujino's conjecture, \cite[Conjecture
      2.21]{Fuj15b}, cf.~\cite[Problem 1.8]{Fuj13a}}] 
    \label{conj:fujino}

    Let $X$ be a compact K\"ahler manifold and
    $D=\sum_{i=1}^{N}D_{i}$ be a simple-normal-crossing
    (snc) divisor on $X$.  Let $F$ be a semi-positive line bundle on
    $X$ (i.e.~it admits a smooth Hermitian metric $h_{F}$ with
    $\ibar\Theta_{h_F}(F) \geq 0$).  Consider a section
    $s \in H^{0}(X, F^{\otimes m})$ whose zero locus $s^{-1}(0)$
    contains no lc centers of the pair $(X,D)$ (i.e.~connected
    components of non-empty intersection
    $D_{i_{1}}\cap \cdots \cap D_{i_{k}}$ of the irreducible
    components $\{D_{i}\}_{i=1}^{N}$).  Then, the multiplication map
    induced by the tensor product with $s$
    \begin{equation*}
      H^{q}\paren{X, K_{X} \otimes D \otimes F}
      \xrightarrow{\otimes s} 
      H^{q}(X, K_{X} \otimes D \otimes F^{\otimes (m+1)} )
    \end{equation*}
    is injective for every $q$.
  \end{conjecture}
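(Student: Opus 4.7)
The plan is to prove the injectivity of multiplication by $s$ on $H^{q}(X, \logKX)$ by adapting Enoki's harmonic-integral proof of the smooth injectivity theorem to the lc pair $(X,D)$, using the theory of harmonic integrals on the lc centers via the analytic adjoint ideal sheaves developed in the preceding sections. Enoki's key insight for the smooth case $D = 0$ is that, for a harmonic representative $u$ of a class in $H^{q}(X, K_{X} \otimes F)$ and a holomorphic section $s$ of a semi-positive line bundle, if $s u$ is $\bar\partial$-exact then in fact $s u$ is itself harmonic; combined with $s u = \bar\partial v$ this gives $\|s u\|^{2} = \langle \bar\partial v, s u\rangle = \langle v, \bar\partial^{*}(s u)\rangle = 0$, so $s u = 0$ pointwise, and since $s$ is not identically zero, $u$ vanishes identically.

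First, for the harmonic representation step, I would endow $X \setminus D_{\mathrm{sing}}$ with a complete K\"ahler metric of Poincar\'e type along $D$ and consider the $L^{2}$ Dolbeault complex of $F$-valued $(n,q)$-forms with weight $\varphi_{F}$ induced by $h_{F}$, adjusted by the logarithmic weight $\psi_{D}$ along $D$ so as to compute $H^{q}(X, \logKX)$. Using the identification between this sheaf cohomology and the associated $L^{2}$ cohomology --- provided by the adjoint ideal filtration and the harmonic theory on lc centers built in the preceding sections --- every class admits a unique $L^{2}$ harmonic representative $u$, and the residues of $u$ along each lc center $Z$ of $(X,D)$ are themselves harmonic representatives in the analogously defined cohomology of $Z$ with the induced lc data.

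Second, I would carry out the Enoki harmonicity step for $u$: a cut-off / exhaustion argument on the Poincar\'e-type complete K\"ahler manifold, combined with the Bochner--Kodaira--Nakano identity and the semi-positivity $\ibar\Theta_{h_{F}}(F) \ge 0$ (which propagates to $F^{\otimes(m+1)}$), yields $\bar\partial^{*}(s u) = 0$ in the $L^{2}$ sense whenever $s u$ is $\bar\partial$-exact. Pairing with $v$ one then obtains $\|s u\|^{2} = 0$, so $s u$ vanishes as an $L^{2}$ form. The hypothesis on $s^{-1}(0)$ implies that $s$ is generically nonvanishing on $X$ and, more importantly, that the restriction $s|_{Z}$ is not identically zero on any lc center $Z$; applying the residue map, $s|_{Z} \cdot \operatorname{Res}_{Z}(u) = 0$ forces $\operatorname{Res}_{Z}(u) = 0$ for every lc center $Z$. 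Descending induction on $\dim Z$ via the residue short exact sequences, combined with the vanishing of $u$ on the open stratum, then forces $u$ to represent the zero cohomology class.

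The main obstacle is the first step --- the harmonic representation itself. Enoki's harmonicity argument is relatively robust once the functional-analytic framework is in place, but the $L^{2}$ theory for $\logKX$-valued forms with the logarithmic weight $\psi_{D}$ involves non-compact Laplace operators with delicate behaviour near the lc centers. The essential technical package is: (a) existence and uniqueness of $L^{2}$ harmonic representatives for $H^{q}(X, \logKX)$; (b) compatibility of the sheaf-theoretic residue maps with the harmonic projections onto the lc centers; and (c) harmonicity of the residues on each lc center $Z$ with respect to the induced metric and weights. This package, developed in the bulk of the paper through the analytic adjoint ideal sheaves, is precisely what makes the inductive argument close.
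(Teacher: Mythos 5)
Your high-level picture---reduce to lc centers via residues, exploit harmonicity of residues, run an Enoki-type argument---matches the paper's strategy in spirit, but your Step 1 has a genuine gap which the paper is at pains to avoid. You propose to realize classes of $\cohgp q[X]{\logKX}$ as $L^2$ harmonic forms for a Poincar\'e-type complete metric on $X\setminus D$ with the logarithmic weight $e^{-\phi_D-\vphi_F}$ (where $\phi_D=\log\abs{\sect_D}^2$). But the $L^2$ Dolbeault isomorphism with such a weight yields $\cohgp q[X]{\logKX \otimes \mtidlof<X>{\phi_D+\vphi_F}}$, and for a reduced snc divisor (an lc, not klt, singularity) one has $\mtidlof<X>{\phi_D}=\defidlof{D}$, so this computes $\cohgp q[X]{K_X\otimes F}$---the \emph{wrong} cohomology group. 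The adjoint ideal sheaves $\aidlof*$ are built precisely to bridge this klt-vs-lc gap, but neither $\cohgp q[X]{\logKX}$ nor $\cohgp q[D]{K_D\otimes F}$ comes with a known $L^2$ Dolbeault/harmonic theory; the paper says so explicitly in Step \ref{step:harmonic-rep} (``there is not yet a well established theory of Dolbeault isomorphism and harmonic theory for cohomology groups on the singular space $D$''). Your package (a)--(c) is exactly what is \emph{not} available in the paper, and constructing it is the open problem, not an input.

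What the paper does instead: it first reduces $\cohgp q[X]{\logKX}$ to $\cohgp q[D]{K_D\otimes F}$ via $0\to K_X\to K_X\otimes D\to K_D\to 0$ together with the klt injectivity of \cite{Matsumura_injectivity-lc}*{Thm.~1.6} applied to the $\otimes\sect_D$ column, and then proves Theorem~\ref{thm:main} for $\cohgp q[D]{K_D\otimes F}$ working entirely in \v Cech cohomology, never producing a harmonic representative on $D$. The residue exact sequences of $\aidlof*$ push the problem inductively onto the compact K\"ahler $\sigma$-lc centers $\lcS$, where Hodge theory is classical; the role of your ``unique harmonic representative'' is played by a tuple $(u_p)_{p}$ of harmonic forms on the $\lcS$'s chosen \emph{orthogonal to} $\ker\tau_\sigma$; and the obstruction to $u_p=0$ is a harmonic $(q-1)$-form $w_b$ on the $(\sigma+1)$-lc centers built from $\HRes$ (Theorem~\ref{thm:residue-harmonic}), which is killed by the residue identity of Proposition~\ref{prop:res-formula-dbar-exact-dot-harmonic}. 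Your description of the Enoki step captures the flavor but not this mechanism. A route closer to your proposal---an honest singular-metric harmonic theory on $X$ adapted to $D$---is essentially what Cao--P\u aun do in \cite{Cao&Paun_LC-inj}, which the authors deliberately do not follow.
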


  The analytic theory corresponding to Koll\'ar's theory has been established for klt singularities 
  (see \cite{Cao&Demailly&Matsumura, Fujino&Matsumura, Gongyo&Matsumura,
    Matsumura_injectivity-survey, Matsumura_injectivity}).
  % but not for lc singularities. 
  Therefore, it remains to develop an analytic treatment to handle the
  lc singularities.
  % the analytic theory corresponding to the works of Ambro and Fujino
  % is interesting in terms of  studying the techniques of analytically treating lc singularities or mixed Hodge theory than just generalizing it. 

  The cases of $\dim X=2$ and plt pairs of arbitrary dimension have been
  solved in \cite{Matsumura_injectivity-lc,
    Matsumura_rel-vanishing-w-nd} (see also \cite{Chan&Choi_injectivity-I}). 
  A full solution to Fujino's conjecture is given recently by
  Junyan Cao and Mihai P\u{a}un \cite{Cao&Paun_LC-inj}.
  In this paper, independent of the results in \cite{Cao&Paun_LC-inj},
  we prove a {\textit{generalized version}} of Fujino's conjecture  
  (Theorem \ref{thm:main}) 
  % by developing the theory of harmonic integrals on simple normal corssing divisors. 
  by applying the theory of harmonic integrals on lc centers of the
  given lc pair.
  Fujino's conjecture is then a direct consequence of Theorem \ref{thm:main}.

  \begin{thm}[Main Result]\label{thm:main}
    Let $X$ be a compact K\"ahler manifold  and 
    $D=\sum_{i=1}^{N}D_{i}$ be an snc divisor on $X$. 
    % such that each component $D_{i}$ is compact. 
    Let $F$ (resp.~$M$) be a line bundle on $X$ 
    with a smooth Hermitian metric $h_{F}$  (resp.~$h_{M}$) 
    such that 
    \begin{equation*}
      \ibar\Theta_{h_F}(F)\geq 0 \quad  \text{ and } \quad
      % \sqrt{-1}(\Theta_{h_F}(F)-t \Theta 
      % _{h_M}(M))\geq 0
      % -C\omega \leq
      \ibar\Theta_{h_M}(M) \leq C \ibar\Theta_{h_F}(F)
      \quad \text{ for some } C>0 \; . 
    \end{equation*}
    % (that is, $D_{i} \cap D_{j} = \emptyset$ for $i \not = j$ 
    % for the irreducible decomposition $D = \sum_{i\in I}D_{i}$). 
    Let $s$ be a  section of $M$  
    such that the zero locus $s^{-1}(0)$ 
    contains no lc centers of the lc pair $(X,D)$.
    Then, the multiplication map induced by the tensor product with $s$
    \begin{equation*}
      H^q(D, K_D \otimes F)
      \xrightarrow{\otimes s } 
      H^q(D, K_D \otimes F\otimes M)
    \end{equation*} 
    is injective for every $q$. 
  \end{thm}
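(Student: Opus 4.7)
The plan is to follow Enoki's harmonic-integrals strategy, adapted to the snc/lc setting through the theory of harmonic forms on lc centers that the earlier sections of the paper develop via adjoint ideal sheaves and residue morphisms. The argument splits into three steps: realize $H^q(D, K_D \otimes F)$ as a space of $L^2$-harmonic forms on the lc stratification; prove a generalized Enoki lemma showing that multiplication by the holomorphic section $s$ preserves harmonicity; and conclude $u=0$ by a standard orthogonality computation together with the hypothesis on $s^{-1}(0)$.

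The first step relies on a Dolbeault-type isomorphism
\[
  H^q(D, K_D \otimes F) \;\simeq\; \mathcal{H}^{n-1,q}(D; F)_{\varphi_F,\, \psi_D},
\]
where the right-hand side denotes $L^2$-harmonic forms on (the stratification of) lc centers of $(X,D)$ with respect to the smooth weight $\varphi_F = -\log h_F$ and a singular weight $\psi_D$ adapted to a defining section of $D$. The identification comes from the residue sequence
\[
  0 \to K_X\otimes F \to K_X\otimes D\otimes F \to K_D\otimes F \to 0
\]
combined with the weighted $L^2$-theory on $X$ with logarithmic poles along $D$, iterated along the lc stratification via the adjoint ideal sheaves and associated residue sheaves. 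A Hodge-type decomposition then singles out a unique harmonic representative of every class.

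For the second step, fix such a harmonic representative $u$ of a class $[u]\in H^q(D, K_D \otimes F)$, so that $\bar\partial u = 0$ and $\bar\partial^{*}_{F,h_F} u = 0$ in the appropriate $L^2$-sense. Holomorphicity of $s$ gives $\bar\partial(su) = 0$ automatically; the content of the generalized Enoki lemma is that $\bar\partial^{*}_{F\otimes M,\, h_F \otimes h_M}(su) = 0$ as well. Here is where the hypotheses combine: $i\Theta_{h_F}(F)\geq 0$, together with the Bochner–Kodaira–Nakano identity applied to the harmonic $u$, forces the pointwise vanishing of the curvature action $[i\Theta_{h_F}(F),\Lambda]u = 0$; the domination $i\Theta_{h_M}(M)\leq C\,i\Theta_{h_F}(F)$ then propagates this vanishing to the $M$-curvature term that appears in the commutator $[\bar\partial^{*},s]$, extending the classical Enoki lemma and its smooth generalizations (Gongyo–Matsumura, Fujino–Matsumura) to the present setting. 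The subtlety is that this computation must be carried out on each lc center of $(X,D)$ and checked to be compatible with the residue morphisms along the stratification; this is where the harmonic theory on lc centers enters in an essential way. Once $su$ is harmonic, the third step is a standard orthogonality argument: if $[su]=0$ in $H^q(D, K_D\otimes F\otimes M)$, then $su=\bar\partial v$, and
\[
  \|su\|^2 \;=\; \langle su,\bar\partial v\rangle \;=\; \langle \bar\partial^{*}(su), v\rangle \;=\; 0,
\]
so $su\equiv 0$. Since $s^{-1}(0)$ contains no lc center, $s$ is generically nonzero on every lc center, and the uniqueness of harmonic representatives in terms of their values/residues on lc centers forces $u=0$.

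The main obstacle is the second step, the generalized Enoki lemma in the singular lc setting. On a smooth Kähler manifold, the Enoki identity is an immediate consequence of the Kähler identities plus semi-positivity; on the lc scheme $D$, however, one must (a) interpret harmonic forms through the delicate $L^2$-theory weighted by the adjoint ideal sheaves, (b) verify that the Enoki identity intertwines with each residue map along the stratification of lc centers, and (c) justify an approximation-and-limit argument for the singular weight $\psi_D$ (e.g.\ by regularizing $\psi_D$ to $\psi_D^{(\varepsilon)}$) while preserving the key identities and estimates in the limit. With the framework from the earlier sections of the paper in hand, these three points can be handled uniformly, reducing the theorem to an application of the machinery to $su$.
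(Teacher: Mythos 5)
Your plan runs aground at the very first step, and this breaks the later steps. You assert a Dolbeault-type isomorphism $H^q(D, K_D \otimes F) \simeq \mathcal{H}^{n-1,q}(D;F)_{\varphi_F,\psi_D}$ with a Hodge decomposition that ``singles out a unique harmonic representative'' on $D$. The paper explicitly observes that no such theory is available on the singular space $D$: this is precisely the difficulty the whole argument is designed to circumvent. What is actually available is the harmonic theory on each smooth lc center $\text{lc}^\sigma_p$. The paper therefore lifts the class $\alpha$ through the residue exact sequence $0 \to \bigoplus_p K_{D_p}\otimes F \to K_D \otimes F \to K_{D_1\cap D_2}\otimes F \to 0$ (and its higher analogues coming from the quotients of adjoint ideal sheaves $\mathcal{J}_\sigma$), uses injectivity of $\otimes s$ on the lower strata to realize $\alpha = \tau(u_1,u_2)$ for harmonic $u_p$ on $D_p$, and — crucially — notes that this representative $(u_p)_p$ is \emph{not} unique because $\tau$ has a kernel. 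The ``optimal'' representative is then chosen to be orthogonal to $\ker\tau$, not harmonic ``on $D$'' in any intrinsic sense.

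Because of this, your third step collapses: the integration by parts $\langle su, \bar\partial v\rangle = \langle\bar\partial^*(su), v\rangle = 0$ cannot be carried out naively. The cochain realizing the trivialization of $[su]$ only lives in $K_X \otimes D \otimes F \otimes M \otimes \mathcal{J}_{\sigma_{\text{mlc}}}$, not in the smaller sheaf $\mathcal{J}_\sigma$ where the residues onto $\text{lc}^\sigma$ were taken; as a result there is a nonzero boundary contribution concentrated on the deeper strata. Concretely, what the paper's residue formula (Proposition~\ref{prop:res-formula-dbar-exact-dot-harmonic}) gives is $\|su\|^2_{\lcc'} = \sigma\sum_b \langle v_{b;(\infty)}, s\, w_b\rangle$ for an explicit $(0,q-1)$-form $w_b$ on each $(\sigma+1)$-lc center, built out of $\PRes[\lcS+1[b]|\lcS](\idxup{\partial\psi_{(p)}}.u_p)$. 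One must then show $w_b$ is harmonic (Theorem~\ref{thm:residue-harmonic}), realize $\delta w \in \ker\tau$, and use the orthogonality $(u_p)_p \perp \ker\tau$ (a \emph{choice}, not a given property of harmonicity) to conclude $\|w\|^2=0$ and hence $u_p=0$. Your proposal has neither the obstruction form $w$, nor the orthogonality-to-$\ker\tau$ mechanism that kills it, nor the inductive descent $\ker\nu_\sigma = \ker\tau_\sigma$ needed for general snc $D$. These are the essential content of the proof, not technical details of a sketch you have already captured.
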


  It can be seen from the proof that the compactness of $X$ in Theorem
  \ref{thm:main} is not necessary as soon as $D$ consists of only finitely many
  irreducible components which are compact.

  \begin{cor}[Solution to Fujino's conjecture]\label{cor:main}
    Conjecture \ref{conj:fujino} is true. 
  \end{cor}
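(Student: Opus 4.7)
Plan. The deduction of Corollary \ref{cor:main} from Theorem \ref{thm:main} is cohomological. Set $M := F^{\otimes m}$ with $h_M := h_F^{\otimes m}$; then $\ibar\Theta_{h_M}(M) = m\,\ibar\Theta_{h_F}(F)$ satisfies the curvature hypothesis of Theorem \ref{thm:main} with $C = m$, and the section $s \in H^0(X, F^{\otimes m})$ of Conjecture \ref{conj:fujino} is exactly a section of this $M$ with zero locus avoiding the lc centers of $(X,D)$. Theorem \ref{thm:main} consequently yields, for every $q$, the injectivity
\[
  \otimes s \colon H^q(D, K_D \otimes F) \longrightarrow H^q(D, K_D \otimes F^{\otimes (m+1)}),
\]
where $K_D := (K_X + D)|_D$ is the dualizing sheaf of the snc divisor $D$.

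The strategy is to combine this with the Poincar\'e residue short exact sequence
\[
  0 \longrightarrow K_X \otimes F \longrightarrow K_X \otimes \mathcal{O}_X(D) \otimes F \xrightarrow{\,\mathrm{Res}\,} \iota_\ast\bigl(K_D \otimes F\bigr) \longrightarrow 0,
\]
where $\iota\colon D \hookrightarrow X$ is the inclusion, together with its analogue obtained by tensoring with $F^{\otimes m}$, to form a commutative ladder of long exact sequences whose vertical arrows are multiplication by $s$. The second ingredient in the ladder is Enoki's classical injectivity theorem on the compact K\"ahler manifold $X$: since $F$ is semi-positive and $s \not\equiv 0$, the map $\otimes s \colon H^q(X, K_X \otimes F) \to H^q(X, K_X \otimes F^{\otimes (m+1)})$ is injective for every $q$. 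Given $\alpha$ in the kernel of the middle vertical, I first apply the residue to obtain $\mathrm{Res}(\alpha) \in H^q(D, K_D \otimes F)$ killed by $\otimes s$; by Theorem \ref{thm:main} it vanishes, so $\alpha$ lifts to a class $\beta \in H^q(X, K_X \otimes F)$, and pursuing the chase on $s\beta$ should force $\alpha = 0$.

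The main obstacle is precisely the last step: a naive five-lemma closure would demand \emph{surjectivity} of $\otimes s$ on $H^{q-1}(D, K_D \otimes F)$, whereas Theorem \ref{thm:main} supplies only injectivity. I anticipate handling this by induction on the number $N$ of irreducible components of $D$. Writing $D = D' + D_N$, the residue sequence
\[
  0 \longrightarrow K_X \otimes D' \otimes F \longrightarrow K_X \otimes D \otimes F \longrightarrow K_{D_N} \otimes D'|_{D_N} \otimes F|_{D_N} \longrightarrow 0
\]
(noting that $D'|_{D_N}$ is an snc divisor on the compact K\"ahler manifold $D_N$ with only $N-1$ components, and that the non-vanishing of $s$ on lc centers passes to this pair) reduces the claim to $(X, D')$ and $(D_N, D'|_{D_N})$, both having fewer boundary components. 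The base case $N = 0$ is Enoki's theorem itself. Matching the chase at each inductive step, possibly by tracking the full ladder in all degrees simultaneously or by descending to harmonic representatives consistent with the analytic proof of Theorem \ref{thm:main}, should close the argument; Fujino's conjecture then follows immediately.
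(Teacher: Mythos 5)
Your opening steps match the paper's: set $M := F^{\otimes m}$, verify the curvature bound, use the long exact sequence associated to $0 \to K_X\otimes F \to K_X\otimes D\otimes F \to K_D\otimes F \to 0$, conclude from Theorem~\ref{thm:main} that the image of $\alpha$ in $H^q(D,K_D\otimes F)$ vanishes, and lift $\alpha = \sect_D\beta$ with $\beta \in H^q(X,K_X\otimes F)$. But ``pursuing the chase on $s\beta$'' does not force $\alpha = 0$: Enoki gives injectivity of $\otimes s$ on $H^q(X,K_X\otimes F)$, whereas what you know is only that $\sect_D\cdot s\beta = 0$, which, since $\otimes\sect_D$ has kernel $\operatorname{im}\delta$, says merely that $s\beta = \delta(\gamma)$ for some $\gamma \in H^{q-1}(D, K_D\otimes F\otimes M)$. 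You correctly identify this as a surjectivity obstruction, but the induction on the number $N$ of components does not remove it: the same obstruction recurs verbatim at each inductive step, and already the first non-trivial case $N=1$ (smooth $D$) is blocked. That case is precisely the plt injectivity theorem, which is \emph{not} a cohomological consequence of Enoki's theorem; it is a separate harmonic-theoretic result (the $\sigma = 0$ instance of Theorem~\ref{thm:ker-nu=ker-tau}, cf.\ Remark~\ref{rem:general-commut-diagram}). Your base case $N=0$ is fine, but the chase never actually reduces to it.

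The paper does not chase at all after the lift: once $\alpha = \sect_D\beta$ is obtained it invokes \cite{Matsumura_injectivity-lc}*{Thm.~1.6} (equivalently \cite{Chan&Choi_injectivity-I}*{Thm.~1.2.1}), which asserts exactly that $\otimes s$ is injective on the subspace $\operatorname{im}(\otimes\sect_D) \subset H^q(X, K_X\otimes D\otimes F)$. That analytic ingredient is what your proposal is missing. Your closing phrase about ``descending to harmonic representatives consistent with the analytic proof of Theorem~\ref{thm:main}'' is the right instinct, but as written it is a placeholder: making it precise would amount to re-proving the plt injectivity theorem, which the paper instead takes as a known prior result.
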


  % Our paper differs from \cite{Cao&Paun_LC-inj} in the following points: 
  % The method of \cite{Cao&Paun_LC-inj} is based on the $L^{2}$-theory of $\dbar$-equations, 
  % whereas our method is based on the theory of harmonic integrals in the same line as in Enoki's work; 
  % specifically, we extend a technique of harmonic differential forms on smooth varieties to simple normal corssing divisors.

  Our proof differs from the one in \cite{Cao&Paun_LC-inj} in the following way.
  While both works make use of (some variant of) the Hodge
  decomposition for $L^2$ forms, Cao and P\u aun prove in
  \cite{Cao&Paun_LC-inj} a Hodge decomposition for $L^2$ forms with
  respect to a K\"ahler metric with conic singularities, which induces
  a Hodge decomposition on currents (which is called the Kodaira--de
  Rham decomposition in \cite{Cao&Paun_LC-inj}) in which the Green
  kernel has controllable singularities.

  For the sake of explanation, let $u$ be an $D\otimes F$-valued
  $(n,q)$-form representing a class in $\cohgp q[X]{\logKX}$
  % ($X$ being compact here)
  such that the class of $s u$ is $0$ in $\cohgp
  q[X]{\logKX M}$.
  Let also $\sect_D$ be a canonical section of $D$.
  Under our notation, the current that is under consideration in
  \cite{Cao&Paun_LC-inj} is $\frac{u}{\sect_D}$, which is not
  necessarily $L^2$ on $X$.
  Using the fact that $\eqcls{su} = 0$ in $\cohgp q[X]{\logKX M}$,
  Cao and P\u aun obtain $\frac{u}{\sect_D} =\dbar\theta + D'_{h_F}
  \beta_1 +\ibar\Theta_{h_F} \wedge \beta_2$, where $\theta$ is
  smooth while $\beta_1$ and $\beta_2$ have log-poles along
  $D+s^{-1}(0)$ (assumed to have only snc).
  It then follows from \cite{Cao&Paun_LC-inj}*{Thm.~1.1} (which
  makes use of the Hodge/Kodaira--de Rham decomposition) and the
  positivity $\ibar\Theta_{h_F} \geq 0$ that $u$ (or $u -\sect_D
  \dbar\theta$) is $\dbar$-exact.

  In our case, we make use of the residue exact sequences of adjoint
  ideal sheaves and the associated residue computation to reduce the
  setup to the union of \emph{$\sigma$-lc centers} of $(X,D)$ (i.e.~lc centers of
  codimension $\sigma$ in $X$, when $(X,D)$ is log-smooth and lc).
  Since each $\sigma$-lc center is a compact K\"ahler manifold, we
  have the Hodge decomposition (thus $L^2$ Dolbeault isomorphism and
  harmonic theory) at our disposal.
  Moreover, our reduction brings the setup essentially to the one in
  \cite{Matsumura_injectivity-lc}*{Thm.~1.6} or
  \cite{Chan&Choi_injectivity-I}*{Thm.~1.2.1} (corresponding to the
  case where $\frac{u}{\sect_D}$ is $L^2$).
  That's why we can follow the line of arguments in Enoki's proof to
  solve the conjecture via the theory of harmonic integrals on lc
  centers (and no extra resolution to bring $s^{-1}(0)$ into snc is
  needed).

  % Thanks to this advantage, we can obtain the generalized version  (not only Fujino's conjecture), 
  This approach gives us the advantage of obtaining Theorem
  \ref{thm:main}, a generalized version of Fujino's conjecture (see
  also Remark \ref{rem:general-commut-diagram} for other generalized
  statements which can be achieved),
  which does not seem to be derivable from results in \cite{Cao&Paun_LC-inj}, at
  least not directly. 
  % Furthermore, the previous works (including \cite{Cao&Paun_LC-inj, Amb03, Amb14, Fuj11} 
  % used the the assumption that $s^{-1}(0)$ contains no lc centers
  % to reduce the proof to the case where the pair $(X,D + s^{-1}(0))$ is log smooth; 
  % however, our paper uses this assumption to apply the inductive argument in terms of lc strata
  % by  the fact that all the data restricted to each component $D_{i}$ satisfy the assumption again. 

  Here we briefly explain the outline of the  proof of Theorem
  \ref{thm:main} with the example where the snc divisor $D$ has
  only two components $D_1$ and $D_2$ such that $D_1 \cap D_2$ is
  irreducible as an illustration.
  In this case, the union of the $1$-lc centers of $(X,D)$ is
  $\lcc|1|' = D_1 \cup D_2$ while that of the $2$-lc centers is
  $\lcc|2|' = D_1 \cap D_2$.
  For any given cohomology class $\alpha \in H^q(D,  K_{D} \otimes F)$
  such that $s  \alpha =0$ in $H^q(D,  K_{D} \otimes F \otimes M)$, 
  the goal is to show that $\alpha$ is actually $0$.

  Write $h_F = e^{-\vphi_F}$ and $h_M = e^{-\vphi_M}$, and let $\psi_D
  := \phi_D -\sm\vphi_D :=\log\abs{\sect_D}^2 -\sm\vphi_D$ be a global
  function on $X$ such that $\phi_D$ is the (local) potential (of the
  curvature of a metric) on $D$ induced from a canonical section
  $\sect_D$ and $\sm\vphi_D$ is some smooth potential on $D$.
  When $D$ is smooth (i.e.~$D_{1}\cap D_{2}=\emptyset$), 
  the class $\alpha $ can be represented by $(u_{1},  u_{2})$, where $u_i$
  is a harmonic form with respect to $\vphi_F$ on $D_i$ in
  $\mathcal{H}^{n-1,q}(D_{i}; F)_{\vphi_{F}} \cong H^{q}(D_{i},
  K_{D_{i}} \otimes F)$ for $i=1,2$.
  Enoki's argument \cite{Eno90} shows that $s u_{i}$ is also a harmonic
  form with respect to $\vphi_F +\vphi_M$ using the
  Bochner--Kodaira--Nakano formula and the given curvature assumption.
  % by the Bochner trick and the assumption of curvatures.
  It follows from $s \alpha =0$ (as a class) that $s u_{i}=0$ (as a form), hence
  $\alpha=(u_{1}, u_{2})=0$ as desired. 

  However, when $D =\lcc|1|'$ (as well as other $\lcc'$ in the more
  general situation) is not smooth (i.e.~$D_{1}\cap D_{2} \neq
  \emptyset$), the Dolbeault and harmonic theories for cohomology groups
  on $D$ are not yet established, obstructing the use of Enoki's
  argument.
  To overcome this difficulty, we make use of the short exact sequence
  \begin{equation*}
    \xymatrix{
      {0} \ar[r]
      & {\bigoplus_{i=1}^2 K_{D_{i}} \otimes \res F_{D_i}} \ar[r]^-{\tau}
      & {K_{D} \otimes \res F_{D}} \ar[r]
      & {K_{D_{1} \cap D_{2}} \otimes \res F_{D_1 \cap D_2}} \ar[r]
      & {0}
    } \; ,
  \end{equation*}
  where $K_{D} :=K_{X}\otimes D \otimes \frac{\holo_X}{\defidlof{D}}$
  and $\defidlof{D}$ is the defining ideal sheaf of $D$ in $X$, and its
  associated long exact sequence of cohomology groups to reduce our
  injectivity problem of the map $\otimes s$ on $D$ to the injectivity
  problems of $\otimes s$ on the lc centers of $(X,D)$ (i.e.~$D_1$,
  $D_2$ and $D_1 \cap D_2$). 
  Note that all of the lc centers are not contained in $s^{-1}(0)$ by
  assumption and are compact K\"ahler manifolds on which the Dolbeault
  isomorphism and harmonic theory are available.

  Such strategy is suggested already in \cite{Matsumura_injectivity-lc}
  and is used there in the proof of the injectivity theorem for plt
  pairs.
  It is framed in \cite{Chan&Choi_injectivity-I} in terms of the adjoint
  ideal sheaves $\aidlof* := \aidlof = \mtidlof<X>{\vphi_F} \cdot
  \defidlof{\lcc+1'} = \defidlof{\lcc+1'}$ (the defining ideal sheaf
  of $\lcc+1'$ in $X$, under the assumption $\vphi_F$ being smooth)
  for integers $\sigma \geq 0$ and the corresponding residue morphisms
  $\Res^\sigma$ for $\sigma \geq 1$ (see Section \ref{subsec:residue}
  for the definitions).
  Writing $\lcc' = \bigcup_{p \in \Iset} \lcS$ as the decomposition of
  $\lcc'$ into the (irreducible) $\sigma$-lc centers $\lcS$, the residue
  morphism $\Res^\sigma$ induces the isomorphism
  \begin{equation*}
    \logKX \otimes \faidlof/-1* \xrightarrow[\isom]{\Res^\sigma}
    \logKX \otimes \residlof* := \bigoplus_{p \in \Iset} K_{\lcS}
    \otimes \res F_{\lcS} 
  \end{equation*}
  (notice that $\logKX \otimes \frac{\defidlof{D_1 \cap
      D_2}}{\defidlof{D}} \isom \bigoplus_{i=1}^2 K_{D_{i}} \otimes \res
  F_{D_i}$ and $\logKX \otimes \frac{\holo_X}{\defidlof{D_1 \cap D_2}}
  \isom K_{D_{1} \cap D_{2}} \otimes \res F_{D_1 \cap D_2}$ in the
  example).
  It can then be seen that, for more general $D$, the reduction can be
  done via the short exact sequences $0 \to \faidlof/-1* \to
  \faidlof|\sigma'|/-1* \to \faidlof|\sigma'|/* \to 0$ for some integers
  $\sigma$ and $\sigma'$ such that $1 \leq \sigma \leq \sigma'$,
  together with an induction on $\sigma$ via some diagram-chasing
  argument.
  See Step \ref{step:harmonic-rep} of Section
  \ref{sec:proof-of-simple-case} and the beginning of Section
  \ref{subsec:general} for precise details.

  After the reduction, we are led to consider the maps
  \begin{equation*}
    \renewcommand{\objectstyle}{\displaystyle}
    \xymatrix{
      {\smash{\bigoplus_{i =1}^2}\:\cohgp q[D_i]{K_{D_i} \otimes
          F}} \ar[r]^-{\tau}
      \ar[dr]^-{\nu}
      &{\cohgp q[D]{K_D \otimes F}}
      \ar[d]^-{\otimes s} \ar@{}@<-1em>[d]_*+{\circlearrowright}
      \\
      &{\cohgp q[D]{K_D \otimes F \otimes M} \; .}
    }
  \end{equation*}
  It suffices to prove that $\ker\nu =\ker\tau$ (Theorem
  \ref{thm:ker-nu=ker-tau}).
  Indeed, given the injectivity of the map $\otimes \res s_{D_1 \cap D_2}$ on
  $\cohgp q[D_1 \cap D_2]{K_{D_1 \cap D_2} \otimes F}$ followed from
  Enoki's argument in the previous case, we see that the given class
  $\alpha \in \cohgp q[D]{K_D \otimes F}$ actually lies in the image
  $\im\tau$ of $\tau$, say, $\alpha = \tau\paren{u_1, u_2}$ for some
  harmonic forms $u_i \in \Harm'/n-1,q/<D_i>{F},{\vphi_F} \isom \cohgp
  q[D_i]{K_{D_i} \otimes F}$.
  It will then follow that $\paren{u_1, u_2} \in \ker\nu
  =\ker\tau$, hence $\alpha =0$, as desired.
  The pair $(u_1,u_2)$ can be treated as a representative of $\alpha$.
  Suggested by the fact that a harmonic form is the unique
  representative with the \emph{minimal} $L^2$ norm among all elements
  in its corresponding $L^2$ Dolbeault cohomology class, we can choose
  an ``optimal'' representative of $\alpha$ such that $(u_1, u_2)$ has
  the \emph{minimal} distance from (i.e.~is orthogonal to) the subspace
  $\ker\tau$ with respect to the $L^2$ norm induced from $\vphi_F$.
  It then suffices to show that $u_i =0$ for $i = 1,2$ to prove that
  $\ker\nu =\ker\tau$.
  This is done by following the proof of
  \cite{Matsumura_injectivity-lc}*{Thm.~1.6} or
  \cite{Chan&Choi_injectivity-I}*{Thm.~1.2.1} (therefore following the
  spirit of Enoki's argument), but with a few technical modifications.

  One technical complication comes from the use of \v Cech cohomology
  for some cohomology groups (e.g.~$\cohgp q[D]{K_D \otimes F}$) due to
  the lack of the Dolbeault isomorphism.
  Another one is that the argument of Takegoshi in
  \cite{Chan&Choi_injectivity-I}*{\S 3.1, Step IV} (see also
  \cite{Matsumura_injectivity-lc}*{Prop.~3.13}), which essentially gives
  rise to an element in $\ker\tau$ constructed from $u_i$'s, is replaced
  by a construction of a harmonic forms $w$ (or a collection $w
  :=\paren{w_b}_{b\in \Iset+1}$ of harmonic forms for the general $D$)
  representing a class in $\cohgp{q-1}[D_1 \cap D_2]{K_{D_1 \cap D_2}
    \otimes F}$ (see \eqref{eq:w-prelim-formula} and \eqref{eq-def-w}).
  The class of $w$ has its image lying in $\ker\tau$ via the connecting
  morphism of the relevant long exact sequence.
  Such construction is suggested by a residue computation, which relates
  an inner product on (the normalization of) $\lcc|1|'$ to an inner
  product on (lower dimensional) $\lcc|2|'$ (see Proposition
  \ref{prop:res-formula-dbar-exact-dot-harmonic}; see also Steps
  \ref{item:express-su-in-residue-norm} and \ref{item:pf:use_u-ortho-w}
  in Section \ref{subsec:general}, or Steps
  \ref{item:expression-of-su-simple} and
  \ref{step:pf:use_u-ortho-w-simple} in Section
  \ref{sec:proof-of-simple-case} for less intensive notation).
  Such relation between the inner products shows that $w$ is the
  obstruction for having $u_i = 0$ for $i=1,2$.
  This becomes the crucial ingredient to complete the proof.

  The proof of Theorem \ref{thm:main} for the case of general $D$
  follows the same arguments.
  A brief comment for the case where $\vphi_F$ and $\vphi_M$ possess
  suitable analytic singularities is given in Remarks
  \ref{rem:singular-vphi_F} and \ref{rem:no-hard-Lefschetz}.

}

This paper is organized as follows.
\tableofcontents

\subsection*{Acknowledgments}
The authors would like to thank the members of Bayreuth University and Pusan National University for their hospitality.
This paper is resulted from the discussions there. 
S.M.~would like to thank Professors Junyan Cao and Mihai P\u{a}un for sharing a preliminary version of \cite{Cao&Paun_LC-inj}.
Also, he would like to thank Professor Osamu Fujino 
for his encouragement and long-standing discussions on lc singularities. 
He is partially supported 
by Grant-in-Aid for Scientific Research (B) $\sharp$21H00976 
and Fostering Joint International Research (A) $\sharp$19KK0342 from
JSPS.
Y.C.~and M.C.~would like to thank S.M.~for drawing their attention to
Fujino's conjecture not long before the covid pandemic (which results
in \cite{Chan&Choi_injectivity-I}) and for joining hand to complete
this project when most aspects of life went back to normal.
Y.C.~and M.C.~were supported by the National Research Foundation
of Korea (NRF) Grant funded by the Korean government
(Nos.~2023R1A2C1007227 and 2021R1A4A1032418).

\section{Preliminary results}\label{sec:preliminaries}

\subsection{Notation and conventions}\label{subsec:notation}

%%%%%
%%%%% File name  : notation.tex
%%%%% Author     : Mario Chan
%%%%% Date       : 13th December, 2021 (original: 04th November, 2020)
%%%%% Description: This is the section "Notation" in the project
%%%%%              "Injectivity-Fujino".
%%%%%
%%
%%%

% In this subsection, we summarize the notation used throughout this paper. 

The following notions are used throughout this paper unless stated otherwise. 
\begin{itemize}
\item $(X,\omega)$ is a compact K\"ahler manifold of dimension $n$. 

% \item $\omega$ is a K\"ahler form on $X$. 

\item $h_F := e^{-\vphi_F}$ and $h_M := e^{-\vphi_M}$, where $\vphi_F$ and
  $\vphi_M$ are respectively the given potentials on $F$ and $M$.
  
\item $D=\sum_{i \in \Iset||}D_{i}$ is a reduced simple-normal-crossing (snc)
  divisor on $X$ (where $\Iset||$ is a finite set). 

\item $\sect_i$ is a canonical section  of the irreducible component $D_{i}$. 

\item $\sect_D := \prod_{i\in \Iset||} \sect_i$ is the canonical section of $D$. 

\item $\sigma \in \{0,1,2,\cdots, n\}$.

% \item  $\Iset$ is the set of $p:=\{i_{1}, i_{2}, \cdots, i_{\sigma}\}$ such that  
% \mmark{$\lcS:=\cap_{k=1}^{\sigma} D_{i_{k}} $}{$\cap D_{i_k}$ may have more than
% one component.} is of codimension $\sigma$. 

% \item   $\lcc' := \cup_{p \in \Iset} \lcS$ is the union of $\sigma$-lc centers $\lcS$ of  $(X,D)$

\item $\lcc' :=\bigcup_{p \in \Iset} \lcS$ is the union of
  \emph{$\sigma$-lc centers of $(X,D)$}, i.e.~the
  $\sigma$-codimensional irreducible components of any intersections
  of irreducible components of $D$ (under the assumption $(X,D)$ being
  log-smooth and lc), indexed by $\Iset$.
  Set $\lcc|0|' := X$ and let $\Iset|0|$ be a singleton for convenience.
  Note also that $\Iset|1| = \Iset||$.

\item $\Diff_{p}D$ is the effective divisor on $\lcS$ defined by the 
adjunction formula 
\begin{equation*}
  K_{\lcS} \otimes \Diff_{p}D = \parres{K_X \otimes D}_{\lcS}
\end{equation*}
such that the restriction of $\sect_{(p)}:=
\smashoperator{\prod\limits_{i \in \Iset|| \colon D_i
    \not\supset \lcS}} \sect_i $ to $\lcS$ is a canonical section of
$\Diff_{p}D$.

\item $\phi_D :=\log\abs{\sect_D}^2$ and $\phi_{(p)}
  :=\log\abs{\sect_{(p)}}^2$ are the potentials induced from the
  canonical sections of $D$ and $\Diff_p D$.

\item $\cvr V := \{V_{i}\}_{i \in I}$ is an open cover of $X$  by admissible open sets. 

\item $\{\rho^{i}\}_{i\in I}$ is a partition of unity subordinate to
  $\cvr V$. 
\end{itemize}

Here an open set $V \subset X$ is said to be \emph{admissible} with
respect to $D$ if $V$ is biholomorphic to a polydisc centered at the
origin under a holomorphic coordinate system $(z_{1}, z_{2}, \cdots,
z_{n})$ such that
\begin{equation*} % \label{eq:local-expression-bphi-psi}
  D =\set{z_1 \dotsm z_{\sigma_V} =0}, \quad 
  \log r_{j}^2 < 0, \quad \text{and }
  r_j \fdiff{r_j} \psi_D >0 \text{ on } V \; , 
  % \res{\vphi_\bullet}_V = \smashoperator{\sum_{k=\sigma_V+1}^n} b_{\bullet,k}
  % \log\abs{z_k}^2 +\beta_\bullet \;\;\text{ for } \bullet= F, M \; ,
\end{equation*} 
where  $r_j := \abs{z_j}$  and $\res{\psi_D}_V := \parres{\phi_D
  -\sm\vphi_D}_V =\sum_{j=1}^{\sigma_V} \log\abs{z_j}^2
-\res{\sm\vphi_D}_V$. 

When an admissible set $V$ is considered, an index $p \in \Iset$ such
that $\lcS \cap V \neq \emptyset$ is interpreted as a permutation
representing a choice of $\sigma$ elements from the set
$\set{1,2,\dots,\sigma_V}$ such that
\begin{equation*}
  \lcS \cap V = \set{z_{p(1)} = z_{p(2)} = \dotsm = z_{p(\sigma)} = 0}
  \quad\text{ and }\quad
  \res{\sect_{(p)}}_V = z_{p(\sigma+1)} \dotsm z_{p(\sigma_V)}
\end{equation*}
(cf.~the definition of the set $\cbn$ in \cite{Chan_adjoint-ideal-nas}*{\S 3.1}).

%%% Local Variables:
%%% mode: latex
%%% TeX-master: "Injectivity-Fujino"
%%% coding: utf-8
%%% End:

%\subfile{commut-diagram_Fujino-conj}%

\subsection{$L^{2}$ Dolbeault isomorphism and some results on harmonic
forms}\label{subsec:l2}

%%%%%
%%%%% File name  : L2-spaces-n-harmonic-forms.tex
%%%%% Author     : Mario Chan
%%%%% Date       : 27th March, 2023
%%%%% Description: This is the section on the basic facts of the Hodge
%%%%%              decomposition and the implications of positivity on
%%%%%              harmonic forms which have been discussed in
%%%%%              previous papers.
%%%%%
%%
%%%

{
  \setDefaultvphi{\vphi_L}

  % Suppose that $X$ is \emph{compact} K\"ahler in this section.
  Let $L$ be a holomorphic line bundle on $X$ equipped with a
  (possibly singular) quasi-psh potential $\vphi_L$, which induces,
  together with the K\"ahler form $\omega$, an $L^2$ norm
  $\norm{\cdot}_{X} := \norm\cdot_{X,\vphi_L,\omega}$ on the space of
  smooth $K_X \otimes L$-valued $(0,q)$-forms (or $L$-valued
  $(n,q)$-forms) on $X$.
  Let $\Ltwo/n,q/{L}_{\vphi_L}$ be the completion with respect to $\norm\cdot_X$
  and $\Harm :=\Harm{L}$ be the space of harmonic forms with respect to $\norm\cdot_X$.
  The $L^2$ Dolbeault isomorphism (see
  \cite{Matsumura_injectivity}*{Prop.~5.5 and 5.8} and
  \cite{Matsumura_injectivity-lc}*{Prop.~2.8} for a proof, and see 
  \cite{Chan&Choi_injectivity-I}*{footnote 1} for its naming) guarantees
  the closedness of the subspaces in the orthogonal decomposition
  \begin{equation*}
    \Ltwo/n,q/{L}_{\vphi_L}
    = \Harm \oplus \cl{\paren{\im\dbar}}_{\vphi_L} \oplus \cl{\paren{\im\dbadj}}_{\vphi_L}
    = \Harm \oplus \paren{\im\dbar}_{\vphi_L} \oplus \paren{\im\dbadj}_{\vphi_L} 
  \end{equation*}
  (where $\dbadj$ is the Hilbert space adjoint of $\dbar$ with respect
  to $\norm\cdot_X$, $\paren{\im\dbar}_{\vphi_L}$ and
  $\paren{\im\dbadj}_{\vphi_L}$ denote the images of the corresponding
  operators, with $\cl{\paren{\im\dbar}}_{\vphi_L}$ and
  $\cl{\paren{\im\dbadj}}_{\vphi_L}$ being their closures in
  $\Ltwo/n,q/{L}_{\vphi_L}$)
  and the isomorphism
  \begin{equation*}
    \Harm \isom \cohgp q[X]{K_X \otimes L \otimes \mtidlof{\vphi_L}}
  \end{equation*}
  between the space of harmonic forms and the \v Cech cohomology
  group.
  % Given a locally finite Stein cover $\cvr V = \set{V_i}_{i \in I}$
  % with a partition of unity $\set{\rho^i}_{i\in I}$ subordinate to,
  With $\cvr V := \set{V_i}_{i\in I}$ and $\set{\rho^i}_{i\in I}$ given in Section \ref{subsec:notation},
  the isomorphism can be given explicitly as follows.
  For any (alternating) \v Cech $q$-cocycle $\set{\alpha_{\idx 0.q}}_{\idx 0,q \in
    I}$ and any harmonic form $u \in \Harm$ such that they represent
  the same class in $\cohgp q[X]{K_X \otimes L \otimes
    \mtidlof{\vphi_L}}$, the two representatives are related by 
  (under the Einstein summation convention)
  \begin{equation} \label{eq:Cech-Dolbeault-isom}
    \begin{aligned}
      u &=\dbar v_{(2)} +\dbar \rho^{i_{q-1}} \wedge \dotsm \wedge
      \dbar\rho^{i_0} \alpha_{\idx 0.q} \qquad\paren{\forall~ i_q \in
        I}
      \\
      &=\dbar v_{(2)} +\dbar \rho^{i_{q-1}} \wedge \dotsm \wedge
      \dbar\rho^{i_0} \cdot \rho^{i_q} \:\alpha_{\idx 0.q}
      \\
      &=\dbar v_{(2)} +(-1)^q \:\underbrace{\dbar \rho^{i_{q}} \wedge
        \dotsm \wedge \dbar\rho^{i_1} \cdot \rho^{i_0} }_{=: \:
        \paren{\dbar\rho}^{\idx q.0}} \alpha_{\idx 0.q}
    \end{aligned}
  \end{equation}
  for some $K_X \otimes L$-valued $(0,q-1)$-form $v_{(2)}$ on $X$ with
  $L^2$ coefficients with respect to $\norm\cdot_{X}$ (see
  \cite{Matsumura_injectivity}*{Prop.~5.5} or
  \cite{Chan&Choi_injectivity-I}*{Lemma 3.2.1}).

  The above result is applicable also to the case when $L$ is replaced by
  $D \otimes L$ equipped with the potential $\phi_D +\vphi_L$, where
  $\phi_D :=\log\abs{\sect_D}^2$.
  Denote the corresponding $L^2$ norm by $\norm\cdot_{X,\phi_D}$.
  Assume that \emph{$\vphi_L$ is smooth on $X$}.
  We state the following simple fact here for clarity.
  \begin{lemma} \label{lem:su-harmonicity}
    If $u \in \Harm{L}$, then $\sect_D u \in \Harm{D\otimes L},{\phi_D+\vphi_L}$.
  \end{lemma}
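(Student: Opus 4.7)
The plan is to realize multiplication by the canonical section $\sect_D$ as a unitary isomorphism of Hilbert spaces, and to observe that this map intertwines the $\dbar$ operators on the two sides; since it is unitary it will also intertwine the Hilbert space adjoints, and consequently preserve harmonicity (the simultaneous vanishing of $\dbar$ and $\dbadj$).

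First, I would verify that $T \colon v \mapsto \sect_D v$ defines a unitary isomorphism
\begin{equation*}
  T \colon \Ltwo/n,q/{L}_{\vphi_L}
  \xrightarrow{\isom} \Ltwo/n,q/{D \otimes L}_{\phi_D + \vphi_L} \; .
\end{equation*}
Because $\phi_D = \log \abs{\sect_D}^2$, the weight on the target absorbs $\abs{\sect_D}^2$ exactly, so the pointwise identity $\abs{\sect_D v}^2 \, e^{-\phi_D - \vphi_L} = \abs{v}^2 \, e^{-\vphi_L}$ holds on $X \setminus D$. Integrating yields $\norm{\sect_D v}_{X,\phi_D+\vphi_L} = \norm{v}_{X,\vphi_L}$, hence $T$ is an isometry. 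Surjectivity follows by writing any element $w$ on the target side as $\sect_D \cdot (w/\sect_D)$, where $w/\sect_D$ is a priori meromorphic along $D$ but belongs to $\Ltwo/n,q/{L}_{\vphi_L}$ by the same pointwise identity.

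Next, since $\sect_D$ is holomorphic, $\dbar(\sect_D v) = \sect_D \, \dbar v$, so $T$ intertwines the two $\dbar$ operators on smooth forms and, by continuity, on their closed extensions. Because $T$ is unitary, a standard general fact about unitary conjugation of closed densely defined operators then gives the intertwining of Hilbert space adjoints $\dbadj_{\phi_D+\vphi_L} \circ T = T \circ \dbadj_{\vphi_L}$. Applied to $u \in \Harm{L}$, for which $\dbar u = 0$ and $\dbadj_{\vphi_L} u = 0$, this immediately produces $\dbar (\sect_D u) = 0$ and $\dbadj_{\phi_D+\vphi_L}(\sect_D u) = 0$; smoothness of $\sect_D u$ is automatic (as $\sect_D$ and $u$ are both smooth), so $\sect_D u \in \Harm{D\otimes L},{\phi_D+\vphi_L}$, as claimed.

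No serious obstacle is anticipated here, since $\vphi_L$ is assumed smooth and the entire argument rests on the elementary observation that a holomorphic multiplier whose pointwise norm is absorbed exactly by an extra singular weight defines a unitary intertwiner. The only point requiring a little care is the passage from the intertwining of $\dbar$ on smooth forms to the intertwining of the Hilbert space adjoints, which is handled by the general closed-operator formalism.
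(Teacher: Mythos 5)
Your reformulation of the lemma as a statement about a unitary map $T \colon v \mapsto \sect_D v$ is a clean way to organize the argument: the pointwise norm identity $\abs{\sect_D v}^2 e^{-\phi_D-\vphi_L} = \abs v^2 e^{-\vphi_L}$ indeed makes $T$ an isometry of the weighted $L^2$ spaces, and the forward intertwining $\dbar(\sect_D v) = \sect_D \dbar v$ immediately gives $T\bigl(\Dom\dbar_{\vphi_L}\bigr) \subseteq \Dom\dbar_{\phi_D+\vphi_L}$.

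However, there is a genuine gap at the step ``by continuity, on their closed extensions.'' For the general closed-operator formalism to yield $\dbadj_{\phi_D+\vphi_L} \circ T = T \circ \dbadj_{\vphi_L}$, you need $T$ to be a \emph{domain-preserving} conjugation, i.e.\ $T\bigl(\Dom\dbar_{\vphi_L}\bigr) = \Dom\dbar_{\phi_D+\vphi_L}$ for the operators whose adjoints define the harmonic spaces. Only the inclusion $\subseteq$ is automatic. Without the reverse inclusion, the conjugated operator $T \circ \dbar_{\vphi_L} \circ T^{-1}$ is merely a closed \emph{restriction} of $\dbar_{\phi_D+\vphi_L}$, so its adjoint is a closed \emph{extension} of $\dbadj_{\phi_D+\vphi_L}$; membership of $\sect_D u$ in the domain of the former does not force membership in $\Dom\dbadj_{\phi_D+\vphi_L}$, which is what the lemma asserts. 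The phrase ``intertwines on smooth forms'' cannot be propagated to the closures by a limit argument here either, because $T^{-1}$ applied to a smooth $\logKX[L]$-valued form is no longer smooth (it acquires poles along $D$), so a sequence of smooth forms on the target side need not pull back to a graph-norm-convergent sequence of smooth forms on the source side.

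Concretely, the missing step is precisely: if $w \in \Dom\dbar_{\phi_D+\vphi_L}$, show that the distributional $\dbar$ on $X$ of $w/\sect_D$ equals $(\dbar w)/\sect_D$, with no extra current supported on $D$ — equivalently, that the boundary contribution along $D$ in the integration by parts vanishes. This is exactly what the paper handles with its Friedrichs approximation and the cutoff $\theta_\eps := \theta \circ \abs{\psi_D}^{-\eps}$, where the residue computation controls the error term $\iinner{\tfrac{\eps\theta'_\eps}{\abs{\psi_D}^{1+\eps}}\idxup{\diff\psi_D}.\sect_D u}{\zeta}_{X,\phi_D}$. One can also close the gap with a more elementary argument: an ordinary geometric cutoff $\eta_\eps$ supported at distance $\gtrsim \eps$ from $D$ yields a boundary term bounded by $\eps^{-1}\norm{w/\sect_D}_{L^2(A_\eps)}\cdot\mathrm{vol}(A_\eps)^{1/2}$ with $\mathrm{vol}(A_\eps) \lesssim \eps^2$, which tends to zero by dominated convergence. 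Either way, this step must be carried out; in its absence the proposal does not establish $\sect_D u \in \Dom\dbadj_{\phi_D+\vphi_L}$, and the lemma is not proved.
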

  
  \begin{proof}
    Since $\sect_D$ is holomorphic, it is clear that $\sect_D u$ is
    $\dbar$-closed.

    Let $\dfadj$ and $\dfadj_{\phi_D}$ be the formal adjoint of
    $\dbar$ with respect to $\vphi_L$ and $\phi_D +\vphi_L$
    respectively.
    It then follows that $\dfadj_{\phi_D} = \dfadj
    +\idxup{\diff\phi_D} . \cdot$ and 
    \begin{equation*}
      \dfadj_{\phi_D} \paren{\sect_D u}
      = \sect_D \:\dfadj u - \idxup{\diff\sect_D}. u
      +\idxup{\diff\phi_D} .\sect_D u
      =\sect_D \dfadj u = 0 \; .
    \end{equation*}
    Note that $\omega$ is not complete on $X \setminus D$ and the
    claim (in particular, $\sect_D u \in \Dom \dbadj_{\phi_D}$, where
    $\dbadj_{\phi_D}$ is the Hilbert space adjoint of $\dbar$ with
    respect to $\norm\cdot_{X,\phi_D}$) cannot follow from the
    standard result (for example, \cite{Demailly}*{Ch.~VIII,
      Thm.~(3.2c)}).
    Indeed, the proof of $su \in \Dom
    \dbadj_{\vphi_M}$ in \cite{Chan&Choi_injectivity-I}*{Cor.~3.2.6}
    gives precisely the result $\sect_D u \in \Dom\dbadj_{\phi_D}$ in
    the current setting, which completes the proof.
    A sketch of it is given below for readers' convenience.
    
    Let $\theta \colon [0,\infty) \to [0,1]$ be a smooth
    non-decreasing cut-off function such that
    $\res\theta_{[0,\frac12]} \equiv 0$ and $\res\theta_{[1,\infty)}
    \equiv 1$.
    Set $\theta_\eps := \theta \circ \frac{1}{\abs{\psi_D}^\eps}$ and
    $\theta'_\eps := \theta' \circ \frac{1}{\abs{\psi_D}^\eps}$ for
    every $\eps \geq 0$ (where $\theta'$ is the derivative of
    $\theta$).
    Then both $\theta_\eps$ and $\theta'_\eps$ have compact supports
    inside $X \setminus D$ for $\eps > 0$ and $\theta_\eps \ascendsto
    1$ pointwisely on $X \setminus D$ as $\eps \descendsto 0$.
    For any $\zeta \in \Dom\dbar \subset \Ltwo/n,q-1/<X>{D\otimes
      L}_{\phi_D+\vphi_L}$, convolution with a smoothing kernel on
    local coordinate charts and the lemma of Friedrichs guarantees the
    existence of a sequence $\seq{\zeta_{\eps, \nu}}_{\nu\in\Nnum}$ of
    smooth forms compactly supported in $X \setminus D$ such that
    $\zeta_{\eps,\nu} \tendsto \theta_\eps \zeta$ in the graph norm
    $\paren{\norm\cdot_{X,\phi_D}^2
      +\norm{\dbar\:\cdot}_{X,\phi_D}^2}^{\frac 12}$ of $\dbar$ for
    each $\eps > 0$.
    It then follows that
    \begin{align*}
      \iinner{\sect_D u}{\dbar\zeta}_{X,\phi_D} 
      \xleftarrow{\eps \tendsto 0^+}
      &~\iinner{\sect_D u}{\theta_\eps \dbar\zeta}_{X,\phi_D} \\
      =&~\iinner{\sect_D u}{\dbar\paren{\theta_{\eps}\zeta}}_{X,\phi_D}
         -\iinner{\sect_D u}{\dbar\theta_\eps \wedge \zeta}_{X,\phi_D} \\
      \xleftarrow{\nu \tendsto \infty}
      &~\iinner{\sect_D u}{\dbar\zeta_{\eps,\nu}}_{X,\phi_D}
        -\iinner{\sect_D u}{\frac{\eps \theta'_\eps}{\abs{\psi_D}^{1+\eps}}
        \dbar\psi_D \wedge \zeta}_{X,\phi_D} \\
      =&~\iinner{\dfadj_{\phi_D} \paren{\sect_D u}}{\zeta_{\eps,\nu}}_{X,\phi_D}
         -\iinner{\frac{\eps \theta'_\eps}{\abs{\psi_D}^{1+\eps}}
         \idxup{\diff\psi_D} . \sect_D u}{\zeta}_{X,\phi_D} \; .
    \end{align*}
    The inner product on the far right-hand-side converges to $0$ as
    $\eps \tendsto 0^+$, a consequence of the residue computation (see
    \cite{Chan&Choi_injectivity-I}*{Prop.~3.2.3 and Remark 3.2.4}).
    We can then conclude that $\sect_D u \in \Dom\dbadj_{\phi_D}$ after
    letting $\nu \tendsto \infty$ and then $\eps \tendsto 0^+$.
  \end{proof}

}

Now consider the cases where $(L, \vphi_L) =(F, \vphi_F)$
% (with the induced $L^2$ norm $\norm\cdot_{X}$)
and $(L, \vphi_L) =(F\otimes M, \vphi_F +\vphi_M)$.
% (with the induced $L^2$ norm $\norm\cdot_{X,\vphi_M}$).
A consequence of the positivity on $F$ and $M$ in
\cite{Enoki}, \cite{Matsumura_injectivity-lc} and
\cite{Chan&Choi_injectivity-I} are recalled below.
\begin{prop} \label{prop:consequence-of-positivity}
  % Suppose $\vphi_F$ and $\vphi_M$ are smooth such that
  % $\ibddbar\vphi_F \geq 0$ and $C\ibddbar\vphi_F \geq \ibddbar\vphi_M
  % \;\paren{\geq - C \omega}$ for some constant $C > 0$.
  % Then, $u \in \Harm{F}$ implies $su \in \Harm{F\otimes
  %   M},{\vphi_F+\vphi_M}$ and $\nabla^{(0,1)}u = 0$.
  Suppose that $\vphi_F$ is smooth such that
  $\ibddbar\vphi_F \geq 0$ and $u \in \Harm{F}$.
  Then, one has  $\nabla^{(0,1)}u = 0$.
  If, furthermore, $\vphi_M$ is smooth and satisfies
  $\paren{- C \omega \leq} \; \ibddbar\vphi_M \leq C\ibddbar\vphi_F$
  for some constant $C > 0$,
  then one also has $su \in \Harm{F\otimes M},{\vphi_F+\vphi_M}$.
\end{prop}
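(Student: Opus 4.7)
The plan is to derive both assertions from the Bochner--Kodaira--Nakano (BKN) identity in the spirit of Enoki's argument, as adapted in \cite{Eno90}, \cite{Matsumura_injectivity-lc} and \cite{Chan&Choi_injectivity-I}. For the first assertion, I apply BKN to the harmonic $F$-valued $(n,q)$-form $u$ on the compact K\"ahler manifold $X$. Harmonicity and the identity give
\begin{equation*}
  0 = \norm{\dbar u}_X^2 + \norm{\dbadj u}_X^2
  = \norm{\nabla^{(0,1)} u}_X^2
  + \iinner{[\ibar\Theta_{h_F}(F), \Lambda_\omega] u}{u}_X \; .
\end{equation*}
Since $\ibar\Theta_{h_F}(F)\geq 0$, the curvature operator on the right is pointwise nonnegative on $(n,q)$-forms (for line bundles, Griffiths/Nakano semi-positivity is equivalent to the $(1,1)$-form inequality). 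Both nonnegative summands sum to zero, so each vanishes: in particular $\nabla^{(0,1)}u=0$, and $\iinner{[\ibar\Theta_{h_F}(F),\Lambda_\omega]u}{u}_x=0$ pointwise at every $x\in X$.

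For the second assertion, $\dbar(su)=0$ follows at once from holomorphicity of $s$ and harmonicity of $u$, while $su\in\Dom\dbadj_{\vphi_F+\vphi_M}$ is automatic since $\vphi_M$ is smooth on the compact $X$. The task is to show $\dbadj_{\vphi_F+\vphi_M}(su)=0$. Applying BKN to $su$ with respect to $h_F\otimes h_M$ gives
\begin{equation*}
  \norm{\dbadj_{\vphi_F+\vphi_M}(su)}^2
  = \norm{\nabla^{(0,1)}(su)}^2
  + \iinner{[\ibar\Theta_{h_F}(F)+\ibar\Theta_{h_M}(M),\Lambda_\omega](su)}{su} \; .
\end{equation*}
The first term on the right vanishes by the Leibniz rule combined with the holomorphicity of $s$ and $\nabla^{(0,1)}u=0$ from the first part. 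For the $F$-part of the curvature pairing, the operator $[\ibar\Theta_{h_F}(F),\Lambda_\omega]$ acts only on the form component, so
\begin{equation*}
  \iinner{[\ibar\Theta_{h_F}(F),\Lambda_\omega](su)}{su}_x
  = \abs{s}_{h_M}^2(x)\cdot
  \iinner{[\ibar\Theta_{h_F}(F),\Lambda_\omega]u}{u}_x = 0
\end{equation*}
pointwise by the first part. For the $M$-part, the hypothesis $\ibar\Theta_{h_M}(M)\leq C\ibar\Theta_{h_F}(F)$ gives the pointwise operator inequality $[\ibar\Theta_{h_M}(M),\Lambda_\omega]\leq C[\ibar\Theta_{h_F}(F),\Lambda_\omega]$ on $(n,q)$-forms, whence $\iinner{[\ibar\Theta_{h_M}(M),\Lambda_\omega](su)}{su}_x\leq 0$ pointwise. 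Integration then forces $\norm{\dbadj_{\vphi_F+\vphi_M}(su)}^2\leq 0$, so $\dbadj_{\vphi_F+\vphi_M}(su)=0$ and $su$ is harmonic with respect to the new metric.

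The only technical point worth flagging is the translation from the $(1,1)$-form curvature bounds on $F$ and $M$ to pointwise operator inequalities on $(n,q)$-forms. For line bundles this is a routine computation (Nakano/Griffiths equivalence), and it is precisely what lets the curvature hypothesis on $M$ close the BKN identity for $su$; this reduction is already formalized in the cited references, so no analytic ingredient beyond BKN, harmonicity of $u$, and the line-bundle curvature comparison is needed.
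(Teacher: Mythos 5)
Your argument is correct and coincides in substance with what the paper does: the paper's own ``proof'' is merely a pointer to the Bochner--Kodaira--Nakano computation in Enoki's work, \cite{Matsumura_injectivity-lc}*{Prop.~3.7} and \cite{Chan&Choi_injectivity-I}*{Prop.~3.2.5, Cor.~3.2.6}, and you have reproduced precisely that computation in full (using the identification $\abs{D'^*u}^2 = \abs{\nabla^{(0,1)}u}^2$ for $(n,q)$-forms to phrase the BKN identity with $\nabla^{(0,1)}u$). The only point worth underlining is the justification of the operator inequality $[\ibar\Theta_{h_M}(M),\Lambda_\omega]\leq C[\ibar\Theta_{h_F}(F),\Lambda_\omega]$: for a line bundle, $[\theta,\Lambda_\omega]$ is $\mathbb R$-linear in the $(1,1)$-form $\theta$ and nonnegative on $(n,q)$-forms whenever $\theta\geq 0$, so applying this to $\theta = C\,\ibddbar\vphi_F - \ibddbar\vphi_M\geq 0$ gives the claim; you mention this but present it as a known reduction, which is fine since it is indeed elementary for line bundles. (The parenthetical lower bound $-C\omega\leq\ibddbar\vphi_M$ in the statement is automatic by compactness and smoothness and is not needed in the argument, as your proof correctly ignores it.)
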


\begin{proof}[Reference to the proof]
  These results follow directly from the Bochner--Kodaira--Nakano
  formula.
  See \cite{Chan&Choi_injectivity-I}*{Prop.~3.2.5 and
    Cor.~3.2.6} (while taking $D=0$ and $\psi_D \equiv -1$ in those
  statements).
  See also the proofs for $\diff^*_h\xi = 0$ in
  \cite{Enoki}*{Prop.~2.1} or $D'^*u = 0$ in
  \cite{Matsumura_injectivity-lc}*{Prop.~3.7}.
  These are equivalent statements to the claim $\nabla^{(0,1)}u =0$
  (indeed, $\diff^*_h = D'^*$ and $\abs{D'^*u}^2 =
  \abs{\nabla^{(0,1)}u}^2$ by \cite{Chan&Choi_injectivity-I}*{Remark
    2.4.3}).
\end{proof}

Lemma \ref{lem:su-harmonicity} and Proposition
\ref{prop:consequence-of-positivity} are applied to the case with
$\lcS$ in place of $X$ and $\phi_{(p)}$ in place of $\phi_D$ in the
following sections.

%%% Local Variables:
%%% mode: latex
%%% TeX-master: "Injectivity-Fujino"
%%% coding: utf-8
%%% End:

\subsection{Adjoint ideal sheaves and the residue computations}
% \subsection{Residue functions and residue short exact sequences}
\label{subsec:residue}

%%%%%
%%%%% File name  : residue-fcts-n-residue-exact-seq.tex
%%%%% Author     : Mario Chan
%%%%% Date       : 10th March, 2023
%%%%% Description: This is the section of the project
%%%%%              "Injectivity-Fujino" on residue functions and 
%%%%%              residue exact sequences. 
%%%%%
%%
%%%

{
  \setDefaultvphi{\vphi_L}

  Let $L$ be a line bundle on $X$ equipped with a \mmark{smooth metric
    $e^{-\vphi_L}$}{$\vphi_L$ has to be smooth, or the claim on the
    jumping number must be mentioned explicitly. The result  $\aidlof*
    =\mtidlof{\vphi_L} \cdot \defidlof{\lcc+1'}$ may not hold
    otherwise. \\ }. 
  The \mmark[BlueGreen]{residue function $\eps \mapsto \RTF|f|,<V>$}{It's
    possible not using ``$\RTF|f|$'' at all in this paper.} of index $\sigma$ 
  is defined, for each \mhlight[BlueViolet]{$f \in  \logKX[L] \otimes
    \smooth_X (V)$}, to be 
  \begin{equation*}
    \RTF|f|,<V> :=\RTF|f|,<V>,
    := \eps \int_V \frac{\abs f^2 \:e^{-\phi_D-\vphi_L}}{\logpole} \quad
    \text{ for } \eps > 0  \; . 
  \end{equation*}\mariocomment[BlueViolet]{For consistency of notation in this
    section only.}%
  The adjoint ideal sheaf $\aidlof :=\aidlof<X>$ of index $\sigma$
  is given at each $x \in X$  by
  \begin{equation*}
    \aidlof_x :=\setd{f \in \holo_{X,x}}{
      \exists~\text{open set } V_x \ni x \:, \: \forall~\eps > 0 \:, \:
      \RTF|f|,<V_x>, < +\infty
    } \; .
  \end{equation*}
  Note that the adjoint ideal sheaf is independent of $\vphi_L$ (as
  $\vphi_L$ is smooth).
  By \cite{Chan_adjoint-ideal-nas}*{Thm.~1.2.3}, the adjoint ideal
  sheaf can be written as 
  \begin{equation*}
    \aidlof = \mtidlof{\vphi_L} \cdot \defidlof{\lcc+1'}
    =\defidlof{\lcc+1'}
    \quad\text{ for any } \sigma \geq 0 \; ,
  \end{equation*}
  where $\defidlof{\lcc+1'}$ is the defining ideal sheaf of $\lcc+1'$
  in $X$ (with the reduced structure), \mmark{and we have the residue short exact
    sequence}{I don't want to suggest that the product structure of
    $\aidlof*$ implies directly the residue exact sequence.}
  \begin{equation*}
    \xymatrix@R-0.5cm@C+0.3cm{
      {0} \ar[r]
      & {\aidlof-1} \ar[r]
      & {\aidlof} \ar[r]^-{\Res^\sigma}
      & {\residlof} \ar[r]
      & {0 \; .}
    }
  \end{equation*}
  Here the quotient sheaf ${\residlof}$, called the \emph{residue sheaf of index $\sigma$}, can be written as 
  \begin{equation*}
    \residlof
    = \bigoplus_{p \in \Iset} \paren{\Diff_p D}^{-1}
    \otimes \mtidlof<\lcS>{\vphi_L}
    = \bigoplus_{p \in \Iset} \paren{\Diff_p D}^{-1}
  \end{equation*}
  Note $\logKX[L] \otimes \residlof =\bigoplus_{p \in\Iset} K_{\lcS} \otimes \res L_{\lcS}.$
  Next we describe the \emph{residue morphism $\Res^\sigma$} in terms of 
  the Poincar\'e residue map $\PRes[\lcS]$ given in
  \cite{Kollar_Sing-of-MMP}*{\S 4.18} as follows. 
  The Poincar\'e residue map $\PRes[\lcS]$ from $X$ to each $\lcS$ is
  uniquely determined after an orientation on the conormal bundle of
  $\lcS$ in $X$ is fixed.
  For an admissible open set $V \subset X$, 
  we have $\lcc' \cap V = \bigcup_{\alert{p \in \Iset}}
  \lcS<V>$ \mmark{(where $\lcS<V> := \lcS \cap V$, which is connected by the
  definition of the admissible open set, and possibly empty)}{This is
  a subtle fact that is used in the residue computation. We can keep
  using the same index set because $V$ is admissible.} and $\lcS<V>
=\set{z_{p(1)} =z_{p(2)} =\dotsm =z_{p(\sigma)}=0}$ when non-empty. 
  Under such coordinate system, a section $f $ of  $\logKX[L] \otimes
  \aidlof$ on $V \subset X$ can be written as
  \begin{equation*}
    f = \;\;\smashoperator{\sum_{p \in \Iset \colon \lcS<V>
        \neq\emptyset}} \;\; dz_{p(1)} \wedge \dotsm \wedge dz_{p(\sigma)}
    \wedge g_p \:\sect_{(p)} 
    =\;\;\smashoperator[l]{\sum_{p \in \Iset \colon \lcS<V>
        \neq\emptyset}}
    \frac{dz_{p(1)}}{z_{p(1)}} \wedge \dotsm
    \wedge \frac{dz_{p(\sigma)}}{z_{p(\sigma)}}
    \wedge g_p \:\sect_D \quad\text{ on } V. 
  \end{equation*}
  % Then, the Poincar\'e residue map $\PRes[\lcS]$ is given by
  \mmark{We therefore see that 
  \begin{equation*}
    \PRes[\lcS](\frac{f}{\sect_D})  =\res{g_p}_{\lcS} \in
    K_{\lcS} \otimes \res L_{\lcS} \quad\text{ on } \lcS<V> 
  \end{equation*}}{I don't want to give the impression that we define
  the Poincar\'e residue map by this formula.}%
  under the assumption that the orientation on the conormal bundle of
  $\lcS$ in $X$ on $V$ is given by $(dz_{p(1)}, dz_{p(2)}, \dots,
  dz_{p(\sigma)})$. 
  % Result in \cite{Chan_adjoint-ideal-nas}*{Thm.~4.1.2 (2)} (or the
  % computation in \cite{Chan_on-L2-ext-with-lc-measures}*{Prop.~2.2.1}
  % or \cite{Chan&Choi_ext-with-lcv-codim-1}*{Prop.~2.2.1}) yields
  % % (assuming that $f$ lives on a neighbourhood $V'$ of $\cl V$)
  % \begin{equation*}
  %   \RTF[\rho]|f|(0),<V> = \lim_{\eps \tendsto 0^+}
  %   %   \lim_{\rho \descendsto \charfct_{\cl V}}
  %   \RTF[\rho]|f|,<V>
  %   =\sum_{p \in \Iset} \frac{\pi^\sigma}{(\sigma -1)!} \int_{\lcS<V>}
  %   \rho \abs{g_p}^2 \:e^{-\vphi_L} 
  % \end{equation*}
  % for any compactly supported smooth function $\rho \colon V \to
  On the other hand, the residue morphism $\Res^\sigma$ is given in
  \cite{Chan_adjoint-ideal-nas}*{\S 4.2} by 
  \begin{equation*}
    \renewcommand{\objectstyle}{\displaystyle}
    \xymatrix@C+0.5cm@R-0.5cm{
      {\logKX[L] \otimes \aidlof} \ar[r]^-{\Res^\sigma}
      \ar@{}[d]|*[left]+{\in} 
      & {\hphantom{\logKX[L] \otimes \residlof}}
      \save +<4em,-1.3ex>*{\logKX[L] \otimes \residlof
        =\bigoplus_{p \in\Iset} K_{\lcS} \otimes \res L_{\lcS}} \restore
      \ar@{}[d]|*[left]+{\in}
      % & *+<-2cm,-1cm>{}
      % \ar@{}[l]|(.41)*+{}
      \\
      *+<0.8cm,0cm>{f} \ar@{|->}[r]
      & {\paren{\res{g_p}_{\lcS}}_{\mathrlap{p\in\Iset}}
        \mathrlap{\hphantom{p\in\Iset} .}} 
    }
  \end{equation*}
  Assuming $f$ being defined on a neighbourhood $V'$ of the closure
  $\cl V$ of $V$ and letting $\rho \colon V' \to [0,1]$ be a compactly
  supported smooth function
  % (i.e.~a smooth cut-off function) being
  identically equal to $1$ on $V$, one obtains, 
  from the result in \cite{Chan_adjoint-ideal-nas}*{Thm.~4.1.2 (2)} (or the
  computation in \cite{Chan_on-L2-ext-with-lc-measures}*{Prop.~2.2.1}
  or \cite{Chan&Choi_ext-with-lcv-codim-1}*{Prop.~2.2.1}),
  a (squared) norm of $g :=\paren{\res{g_p}_{\lcS<V>}}_{p \in \Iset} \in
  \logKX[L] \otimes \residlof$ on $V$ given by
  \begin{equation} \label{eq:residue-norm}
    \norm{g}_{\lcc<V>'}^2 :=\RTF|f|(0),<V>
    =\lim_{\rho \descendsto \charfct_{\cl V}} \lim_{\eps \tendsto 0^+}
    \RTF[\rho]|f|,<V'>
    =\sum_{p \in \Iset} \frac{\pi^\sigma}{(\sigma -1)!}
    \int_{\mathrlap{\lcS<V>}} \;\;\;
    \abs{g_p}^2 \:e^{-\vphi_L}
    =:\sum_{p\in\Iset} \norm{g_p}_{\lcS<V>}^2 \; ,
  \end{equation}
  where the limit $\lim_{\rho \descendsto \charfct_{\cl V}}$ refers to
  the pointwise limit as $\rho$ descends to the characteristic
  function $\charfct_{\cl V}$ of $\cl V$ on $X$.
  Such a norm is referred to as the \emph*{residue norm on $\logKX[L]
    \otimes \residlof$ on $V$}.
  %%%%% \emph* is needed as the package embrac is used and \logKX[L]
  %%%%% appears inside \emph.
  Moreover, we also see from the residue exact sequence that
  \begin{equation*}
    \aidlof-1_x
    =\setd{f \in \aidlof_x}{ \exists~\text{open set } V_x
      \ni x \:, \: \RTF|f|(0),<V_x> = 0}
    % \\
    % &=\setd{f \in \aidlof_x}{ \exists~\text{open set } V_x
    %   \ni x \:, \: \RTF|f|,<V_x> = \BigO(\eps) \text{ as } \eps
    %   \tendsto 0^+}
  \end{equation*}
  for every $x \in X$.

  Under the assumption that $\vphi_L$ has only neat analytic
  singularities (which is indeed smooth in the current setting), the
  residue norm on an admissible open set $V \subset X$ can also be
  obtained from 
  \begin{equation*}
    \lim_{\eps \tendsto 0^+} \eps \int_{V} \frac{
      \rho \abs f^2 \:e^{-\phi_D-\vphi_L}
    }{\abs{\psi_D}^{\sigma +\eps}}
    =\RTF[\rho]|f|(0),<V>
  \end{equation*}
  for any smooth compactly supported cut-off function $\rho$ on $V$ (see
  \cite{Chan&Choi_ext-with-lcv-codim-1}*{Prop.~2.2.1} or
  \cite{Chan&Choi_injectivity-I}*{Thm.~2.6.1}).
  Moreover, the above equation works not only for $f$ with holomorphic
  coefficients, but also for $f$ with coefficients in
  $\smooth_{X\,*}$, where
  \begin{align*}
    \smooth_{X\, *}
    &:=\paren{\smooth_{X}\left[
      \frac{1}{\abs{\sect_i}} \colon i \in \Iset||
      \right]}_{\text{b}}
      \qquad\paren{\sect_i \text{ treated as a local defining function of }
      D_i} \\
    &:=\set{\text{locally bounded elements in the $\smooth_X$-algebra generated
      by } \frac{1}{\abs{\sect_i}} \text{ for all } i\in\Iset||} \;
      .\footnotemark
  \end{align*}%
  \footnotetext{
    On an admissible open set $V$ under the holomorphic coordinate
    system $(z_1,\dots, z_n)$ such that $D\cap V =\set{z_1 z_2 \dotsm
      z_{\sigma_V} =0}$, one has
    \begin{equation*}
      \smooth_{X \,*}(V)
      =\smooth_X(V)\left[e^{\pm \cplxi \theta_1}, \dots, e^{\pm \cplxi
          \theta_{\sigma_V}} \right]
    \end{equation*}
    where $(r_j,\theta_j)$ is the polar coordinate system of the
    $z_j$-plane for $j=1,\dots,\sigma_V$ in $V$, which is (almost) the
    same as the ad hoc definition of $\smooth_{X\, *}(V)$ given in 
    \cite{Chan&Choi_injectivity-I}*{\S 2.6} (in which
    $e^{\pm\cplxi\theta_{k}}$ for $k \geq \sigma_V +1$ are also included
    in the set of generators of the algebra).
    The definition given here is independent of coordinates and its
    sheaf structure can be seen easily.
  }%
  The coefficients of $\Res^\sigma$ (and hence $\PRes[\lcS]$ for any
  $p\in\Iset$) can be extended from $\holo_X$ to $\smooth_{X\,*}$
  accordingly.
  The residue norm is finite when the coefficients of $f$ belong to
  $\smooth_{X\,*} \cdot \aidlof$ on $V$.
  When the induced inner product is considered, one still has
  finiteness even if one of the argument does not have coefficients in
  $\smooth_{X\,*} \cdot \aidlof$, which is the content of the
  following proposition.
  \begin{prop} \label{prop:residue-product-X-to-lcS}
    Given any admissible open set $V \subset X$ and any section $f \in
    \logKX[L] \otimes \smooth_{X \:c\,*} \cdot\aidlof\paren{V}$
    (compactly supported in $V$) such
    that $\Res^\sigma(f) = g =\paren{g_p}_{p\in\Iset}$, one
    has, for any $\xi \in \logKX[L] \otimes \smooth_{X \, *}\paren{V}$,
    \begin{align*}
      \lim_{\eps \tendsto 0^+} \eps \int_V
      \frac{\inner{\xi}{f} \:e^{-\phi_D-\vphi_L}}{\abs{\psi_D}^{\sigma
      +\eps}}
      &=\sum_{p \in \Iset} \frac{\pi^\sigma}{(\sigma-1)!}
        \int_{\lcS<V>} \inner{\frac{\rs*\xi_p}{\sect_{(p)}}}{\: g_p}
        \:e^{-\vphi_L} \\
      &=\sum_{p \in \Iset}
      % \smash[b]{
        \underbrace{
        \frac{\pi^\sigma}{(\sigma-1)!}
        \int_{\lcS<V>} \inner{\rs*\xi_p}{\: g_p \sect_{(p)}}
        \:e^{-\phi_{(p)}-\vphi_L}
        }_{\displaystyle =:
        \iinner{\rs*\xi_p}{g_p\sect_{(p)}}_{\mathrlap{\lcS<V>,
        \phi_{(p)}}}}
  % }
  %   \vphantom{\underbrace{\int_{\lcS<V>}}_{\iinner{\rs*\xi_p}{g_p\sect_{(p)}}}}
    \end{align*}
    which is finite,
    where $\phi_{(p)} :=\log\abs{\sect_{(p)}}^2$ and
    \begin{equation*}
      \rs*\xi_p := \PRes[\lcS](\frac{\xi}{\sect_D}) \cdot \sect_{(p)}
      \in K_{\lcS} \otimes \Diff_p D \otimes \res L_{\lcS} \otimes
      \smooth_{X\:c\, *}\paren{\lcS<V>} \; .
    \end{equation*}
    Moreover, if either $f$ or $\xi$ belongs to $\logKX[L] \otimes
    \smooth_{X\,*} \cdot\aidlof-1\paren{V}$, then $\eps \int_V
      \frac{\inner{\xi}{f} \:e^{-\phi_D-\vphi_L}}{\abs{\psi_D}^{\sigma
      +\eps}} = \BigO(\eps)$ (the big-O notation) as $\eps \tendsto 0$.
  \end{prop}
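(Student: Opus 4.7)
The plan is to prove Proposition \ref{prop:residue-product-X-to-lcS} by a local computation in admissible coordinates on $V$ that generalises the diagonal residue-norm formula \eqref{eq:residue-norm} (which handles $\xi = f$) to the sesquilinear pairing with an arbitrary test form $\xi$. The essential ingredients are the explicit description $\aidlof = \defidlof{\lcc+1'}$, the local Poincar\'e-residue expansion of a section of $\logKX[L] \otimes \aidlof$ described immediately before the proposition, and the polar-coordinate residue asymptotic
\begin{equation*}
  \lim_{\eps \tendsto 0^+} \eps \int_V \frac{F(z) \:dV_\omega}{\prod_{k=1}^{\sigma} \abs{z_{p(k)}}^2 \cdot \abs{\psi_D}^{\sigma+\eps}}
  \;=\; \frac{\pi^\sigma}{(\sigma-1)!} \int_{\lcS<V>} F \cdot e^{-\phi_{(p)}} \:dV_\omega
  \qquad (F \text{ bounded and smooth})
\end{equation*}
already used in \cite{Chan&Choi_ext-with-lcv-codim-1}*{Prop.~2.2.1} and \cite{Chan&Choi_injectivity-I}*{Thm.~2.6.1} to derive \eqref{eq:residue-norm}.

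First, I would cover $V$ by a partition of unity subordinate to small neighbourhoods of each $\lcS<V>$ together with one piece disjoint from $\lcc'$; on the latter piece at most $\sigma - 1$ of the $z_j$'s vanish simultaneously, so $\abs{\psi_D}^{-\sigma-\eps}$ lacks the critical pole order and the polar-coordinate asymptotic renders the contribution $O(\eps)$. Near each $\lcS<V>$, I would use the same local expansion of $f$ into its Poincar\'e-residue pieces as is used to define $g = (g_p)_{p \in \Iset}$, and expand $\xi/\sect_D$ in the local log-frame $\frac{dz_1 \wedge \dotsm \wedge dz_n}{z_1 \dotsm z_{\sigma_V}}$; by construction $\PRes[\lcS](\xi/\sect_D)$, and hence $\rs*\xi_p$, extracts precisely the coefficient of the $p$-indexed log-frame basis element in this expansion.

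Substituting into $\eps \int_V \inner{\xi}{f}\:e^{-\phi_D-\vphi_L}/\abs{\psi_D}^{\sigma+\eps}$ produces a double sum indexed by pairs $(p,q) \in \Iset \times \Iset$. For the diagonal $p = q$, the monomial factors from the $p$-component of $f$ and the $p$-component of $\bar\xi$ cancel exactly $\sigma$ of the factors $\abs{z_j}^{-2}$ supplied by $e^{-\phi_D}$, leaving a bounded coefficient times $\prod_{k=1}^{\sigma} \abs{z_{p(k)}}^{-2} \cdot \abs{\psi_D}^{-\sigma-\eps}$, i.e.~poles exactly along $\lcS$, and the polar-coordinate asymptotic delivers the claimed $\lcS<V>$-integral with the combinatorial constant $\pi^\sigma/(\sigma-1)!$; direct inspection identifies the resulting boundary coefficient with $\iinner{\rs*\xi_p/\sect_{(p)}}{g_p}_{\lcS<V>,\phi_{(p)}} \cdot e^{-\vphi_L}$. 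For a cross term $p \neq q$, some index $k \in q \setminus p$ leaves either $z_{q(k)}$ or $\bar z_{q(k)}$ uncancelled; the resulting angular factor $e^{\pm i \theta_k}$ integrates to $0$ in the polar directions transverse to $\lcS$, and the residual singularity order along $\lcS$ drops strictly below $\sigma$, so the same asymptotic yields $O(\eps)$. Summing over $p$ gives the main equality. The "Moreover" clause follows at once: if $f \in \smooth_{X\,*} \cdot \aidlof-1 = \smooth_{X\,*} \cdot \defidlof{\lcc'}$ every $g_p = 0$ and each diagonal term acquires the same extra vanishing factor as the cross terms, so the whole integral is $O(\eps)$; the symmetric role of $\xi$ and $f$ in the cross-term estimate handles $\xi \in \smooth_{X\,*} \cdot \aidlof-1$.

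The main obstacle I expect is the combinatorial bookkeeping: checking that in every cross term the singularity order along $\lcS$ drops strictly below $\sigma$ (so that the $\eps$-rescaled integral is genuinely $O(\eps)$), that the angular $e^{\pm i\theta_k}$-integrations vanish except in the matched $p=q$ pairing, and that the constant $\pi^\sigma/(\sigma-1)!$ appears exactly once per $p$ without contamination between different $p$-indices. This is essentially the calculation carried out in \cite{Chan&Choi_ext-with-lcv-codim-1}*{Prop.~2.2.1} and \cite{Chan&Choi_injectivity-I}*{Thm.~2.6.1} performed in sesquilinear rather than diagonal form, so the adaptation should be routine rather than involve any new analytic difficulty.
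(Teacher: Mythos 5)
Your overall strategy agrees with the paper's in its main ingredients: localize on an admissible $V$, use the local Poincar\'e-residue expansion of a section of $\logKX[L]\otimes\aidlof$, and feed the resulting integrand into the polar-coordinate residue asymptotic underlying \eqref{eq:residue-norm}, with \cite{Chan&Choi_ext-with-lcv-codim-1}*{Prop.~2.2.1} and \cite{Chan&Choi_injectivity-I}*{Thm.~2.6.1} as the correct comparison references. Where you and the paper diverge is in the bookkeeping, and the paper's route is decisively simpler. It first uses linearity in $f$ to reduce to a single term $f=dz_1\wedge\dotsm\wedge dz_\sigma\wedge g_p\,\sect_{(p)}$ attached to one $p$, then writes the arbitrary test form as $\xi=dz_1\wedge\dotsm\wedge dz_\sigma\wedge\xi_p$ (always possible, $\xi_p$ being simply the contraction; no membership of $\xi$ in any ideal is needed), and then carries out an explicit $\sigma$-fold radial integration by parts. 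No partition of unity localizing near each $\lcS<V>$ is introduced, no double sum over pairs $(p,q)$ arises, and there is no cross-term estimate to run. Your double sum is thus an avoidable complication, and the cross-term argument you lead with (``$e^{\pm\cplxi\theta_k}$ integrates to zero'') is in fact unreliable here: both $\xi$ and $f$ are only required to have $\smooth_{X\,*}$ coefficients, so they can carry their own $e^{\pm\cplxi\theta_j}$ factors which cancel the oscillation you are counting on. Your backup argument, namely that the pole order along the localizing centre drops strictly below $\sigma$ so the $\eps$-rescaled contribution is $\BigO(\eps)$, is the mechanism that actually works, and it is precisely this mechanism that the paper invokes (in polarized form) for the \emph{Moreover} clause: if $f$ or $\xi$ has coefficients in $\smooth_{X\,*}\cdot\aidlof-1$, then $\inner{\xi}{f}$ vanishes on $\lcc'$, one more radial integration by parts is available, the integral $\int_V \inner{\xi}{f}\:e^{-\phi_D-\vphi_L}/\abs{\psi_D}^{\sigma+\eps}$ converges for $\abs{\eps}<1$, and the $\BigO(\eps)$ bound follows at once.

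One small correction to the displayed asymptotic you cite as your black box: with the weight $e^{-\phi_{(p)}}=1/\abs{\sect_{(p)}}^2$ on the right-hand side, the integral over $\lcS<V>$ diverges for generic bounded $F$, so this cannot be the universal kernel. What actually emerges from the radial integration by parts in the sesquilinear setting (see the last line of the paper's proof) has $1/\sect_{(p)}$ entering linearly rather than quadratically, which is integrable; the two expressions in the proposition's display are reconciled via $\inner{\rs*\xi_p}{g_p\sect_{(p)}}\:e^{-\phi_{(p)}-\vphi_L}=\inner{\rs*\xi_p}{g_p}\:e^{-\vphi_L}/\sect_{(p)}$, and in the diagonal case \eqref{eq:residue-norm} the $\sect_{(p)}$'s cancel entirely, leaving $e^{-\vphi_L}$ alone.
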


  \begin{proof}
    By linearity in $f$ in the equation in the claim, it suffices to
    consider the case where $\lcS \cap V = \set{z_1 =z_2 = \dotsm
      z_\sigma = 0}$, $\res{\sect_{(p)}}_{V} = z_{\sigma+1} \dotsm z_{\sigma_V}$ and
    \begin{equation*}
      f = dz_1 \wedge dz_2 \wedge \dotsm \wedge dz_\sigma \wedge g_p
      \sect_{(p)} 
    \end{equation*}
    (in which $g_p$ is abused to mean a $(n-\sigma,0)$-form on $V$).
    Write also
    \begin{equation*}
      \xi =: dz_1 \wedge dz_2 \wedge \dotsm \wedge dz_\sigma \wedge
      \xi_p
      \quad\text{ such that }\;\;
      \res{\xi_p}_{\lcS<V>} = \PRes[\lcS](\frac{\xi}{\sect_D})
      \cdot \sect_{(p)} =\rs*\xi_p \; .
    \end{equation*}
    Let $(r_j, \theta_j)$ be the polar coordinates of the $z_j$-plane
    and set 
    \begin{equation*}
      F_0 :=\inner{\xi_p}{g_p} \:e^{-\vphi_L}
      \quad\text{ and }\quad
      F_j :=\fdiff{r_j} \paren{\frac{F_j}{r_j^2 \fdiff{r_j^2} \psi_D}}
      \quad\text{ for } j=1,\dots, \sigma \; .
    \end{equation*}
    Notice that $\fdiff{r_j} \sect_{(p)} = 0$ and coefficients of
    $F_j$ are in $\smooth_{X\:c\,*}$ on $V$ for $j=1,\dots,\sigma$.
    It then follows from the similar computation in
    \cite{Chan&Choi_ext-with-lcv-codim-1}*{Prop.~2.2.1} or
    \cite{Chan&Choi_injectivity-I}*{Thm.~2.6.1} that
    \begin{align*}
      \eps \int_V
      \frac{\inner{\xi}{f} \:e^{-\phi_D-\vphi_L}}{\abs{\psi_D}^{\sigma
      +\eps}}
      &=\eps \int_V
        \frac{\inner{\xi_p}{g_p} \:e^{-\vphi_L}}{\sect_{(p)}\:\abs{\psi_D}^{\sigma
        +\eps}} \wedge \bigwedge_{j=1}^\sigma \frac{\pi\ibar\:dz_j
        \wedge d\conj{z_j}}{\abs{z_j}^2}
      \\
      &=\eps \int_V \frac{F_0}{\sect_{(p)}\:\abs{\psi_D}^{\sigma +\eps}}
        \prod_{j=1}^\sigma d\log r_j^2 \cdot
        \underbrace{\prod_{j=1}^\sigma \frac{d\theta_j}2}_{=:\:
        \vect{d\theta}}
      \\
      &=\frac\eps{\sigma-1+\eps}
        \int_{V} \frac{F_0}{\sect_{(p)}\:r_1^2 \fdiff{r_1^2}\psi_D}
        \:d\paren{\frac{1}{\abs{\psi_D}^{\sigma-1+\eps}}}
        \prod_{j=2}^{\sigma} d\log r_j^2 
        \cdot \vect{d\theta} \\
      &\overset{\mathclap{\text{int.~by parts}}}=
        \quad\;\;
        \frac{-\eps}{\sigma-1+\eps}
        \int_{V}
        \frac{\alert{F_1}}{\sect_{(p)}\:\abs{\psi_D}^{\sigma-1+\eps}}
        \prod_{j=2}^{\sigma} d\log r_j^2 
        \cdot dr_1 \:\vect{d\theta} \\
      &= \dotsm =
        \frac{(-1)^{\sigma} \eps} {\prod_{j=1}^{\sigma} \paren{\sigma-j+\eps}} 
        \int_{V}
        \frac{F_{\alert{\sigma}}}{\sect_{(p)}\:\abs{\psi_D}^{\eps}}
        \prod_{j=1}^{\sigma} dr_j
        \cdot \vect{d\theta} \; .
    \end{align*}
    Note that $\frac{1}{\sect_{(p)}}$ is integrable on $V$, so the
    integral on the far right-hand-side converges for all $\eps \geq
    0$. 
    Letting $\eps \tendsto 0^+$ on both sides, the desired formula
    then follows from the fundamental theorem of calculus.

    When $f$ or $\xi$ belongs to $\logKX[L] \otimes \smooth_{X\,*}
    \cdot\aidlof-1\paren{V}$, the residue formula in the proposition
    holds even when $\sigma$ is replaced by $\sigma-1$, which implies
    that the integral $\int_V \frac{\inner{\xi}{f}
      \:e^{-\phi_D-\vphi_L}}{\abs{\psi_D}^{\sigma +\eps}}$ converges
    for all $\abs\eps < 1$, hence the last claim.
  \end{proof}

  When restriction to a subspace of codimension $1$ is considered,
  there is a more classical kernel for obtaining the residue formula.
  As an illustration, the residue formula from $X$ to $\lcc|1|'$ is
  proved in the following proposition (which is applied to the case
  where the residue from $\lcc'$ to $\lcc+1'$ is considered later). 
  Recall that $\lcc|1|' =D =\sum_{i \in \Iset||} D_i$, where $D_i =
  \lcS|1|[i]$ and $\Iset|| =\Iset|1|$ are set for convenience.
  \begin{prop} \label{prop:residue-formula-classical-kernel}
    Given any admissible open set $V \subset X$ and any compactly
    supported section $f \in
    \logKX[L] \otimes \smooth_{X \:c\,*} \cdot\aidlof|1|\paren{V}$
    such that $\Res^1(f) = g =\paren{g_i}_{i\in\Iset||}$, one
    has, for any $\xi \in \logKX[L] \otimes \smooth_{X \, *}\paren{V}$,
    \begin{align*}
      \lim_{\eps \tendsto 0^+} \eps \int_V
      \inner{\xi}{f} \:e^{-\phi_D-\vphi_L} e^{-\eps\abs{\psi_D}}
      &=\sum_{i \in \Iset||} \pi
        \int_{D_i \cap V} \inner{\frac{\rs*\xi_i}{\sect_{(i)}}}{\: g_i}
        \:e^{-\vphi_L} \\
      &=\sum_{i \in \Iset||}
      % \underbrace{
        \pi
        \int_{D_i \cap V} \inner{\rs*\xi_i}{\: g_i \sect_{(i)}}
        \:e^{-\phi_{(i)}-\vphi_L}
        % }_{\displaystyle =:
        %   \iinner{\rs*\xi_i}{g_i\sect_{(i)}}_{\mathrlap{D_i \cap V,
        %   \phi_{(i)}}}}
    \end{align*}
    which is finite,
    where $\phi_{(i)} :=\log\abs{\sect_{(i)}}^2$ and
    \begin{equation*}
      \rs*\xi_i := \PRes[D_i](\frac{\xi}{\sect_D}) \cdot \sect_{(i)}
      \in K_{D_i} \otimes \Diff_i D \otimes \res L_{D_i} \otimes
      \smooth_{X\:c\, *}\paren{D_i \cap V} \; .
    \end{equation*}    
  \end{prop}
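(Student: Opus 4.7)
The plan is to follow the template of the proof of Proposition \ref{prop:residue-product-X-to-lcS}, exploiting the fact that $\sigma = 1$ reduces the repeated integration by parts to a single one, and using the simplification $e^{-\eps\abs{\psi_D}} = e^{\eps\psi_D}$ available because $\psi_D < 0$ on the admissible set $V$. By linearity of the integral in $f$ and additivity of the residue morphism $\Res^1$, I would first reduce to a single component $D_i$. Choosing admissible coordinates $(z_1,\dots,z_n)$ on $V$ so that $D_i \cap V = \set{z_1 = 0}$ and $\res{\sect_{(i)}}_V = z_2 \dotsm z_{\sigma_V}$, one writes
\begin{equation*}
  f = dz_1 \wedge g_i \sect_{(i)} \:, \qquad \xi =: dz_1 \wedge \xi_i \;,
\end{equation*}
with $\res{\xi_i}_{D_i \cap V} = \rs*\xi_i$ by the definition of the Poincar\'e residue $\PRes[D_i]$.

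Passing to polar coordinates $(r_1,\theta_1)$ in the $z_1$-plane and using $\sect_D = z_1 \sect_{(i)}$ to cancel as in the proof of Proposition \ref{prop:residue-product-X-to-lcS}, the left-hand side becomes
\begin{equation*}
  \eps \int_V
    \frac{\inner{\xi_i}{g_i}\:e^{-\vphi_L}}{\sect_{(i)}}
    \:e^{\eps\psi_D}
    \cdot d\log r_1^2 \cdot \frac{d\theta_1}{2} \;.
\end{equation*}
The crucial identity on the admissible set, obtained from $\partial_{r_1}\psi_D = \frac{2}{r_1} - \partial_{r_1}\sm\vphi_D$, is
\begin{equation*}
  \eps\:e^{\eps\psi_D}\:d\log r_1^2
  = \partial_{r_1}\paren{e^{\eps\psi_D}} \:dr_1
  + \eps \:\partial_{r_1}\sm\vphi_D \cdot e^{\eps\psi_D} \:dr_1 \;.
\end{equation*}
Integration by parts in $r_1$ against the first term on the right shifts the derivative onto $\inner{\xi_i}{g_i}\:e^{-\vphi_L}/\sect_{(i)}$; the outer boundary term vanishes because $f$ is compactly supported in $V$, while the inner boundary term (at $r_1 = 0$) vanishes because $e^{\eps\psi_D} \tendsto 0$ there for every $\eps > 0$. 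The second term carries an explicit factor $\eps$ and, in view of $\abs{e^{\eps\psi_D}} \leq 1$ on $V$, contributes $0$ in the limit.

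Sending $\eps \tendsto 0^+$, $e^{\eps\psi_D}$ converges pointwise to $1$ on $V \setminus D$; dominated convergence (whose integrability hypothesis is secured by the $\smooth_{X \:c\,*}$-regularity of $f$ together with the smoothness of $\PRes[D_i]\paren{\xi/\sect_D} = \rs*\xi_i / \sect_{(i)}$ along $D_i$) combined with the fundamental theorem of calculus in $r_1$ returns the boundary value of $\inner{\xi_i}{g_i}\:e^{-\vphi_L}/\sect_{(i)}$ at $r_1 = 0$, and the $\theta_1$-integration contributes $\int_0^{2\pi} \frac{d\theta_1}{2} = \pi$. Summing over $i$ then yields the claimed formula; the second equality in the statement is obtained just by absorbing $\frac{1}{\sect_{(i)}}$ into $e^{-\phi_{(i)}}$ via $\phi_{(i)} = \log\abs{\sect_{(i)}}^2$. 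The main obstacle is the bookkeeping of the boundary and error terms in the presence of the smooth perturbation $\sm\vphi_D$ inside $\psi_D$ and of the $\smooth_{X\,*}$-valued (not merely holomorphic) coefficients in $\xi$; however, these are handled verbatim as in the proof of Proposition \ref{prop:residue-product-X-to-lcS} and in \cite{Chan&Choi_ext-with-lcv-codim-1}*{Prop.~2.2.1} and \cite{Chan&Choi_injectivity-I}*{Thm.~2.6.1} cited therein.
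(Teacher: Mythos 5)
Your proposal is correct and follows essentially the same argument as the paper's proof: reduce by linearity to a single component $D_i$, write $f$ and $\xi$ in admissible coordinates, pass to polar coordinates in $z_1$, integrate by parts in $r_1$ against the kernel $e^{\eps\psi_D}$, and then recover the boundary value at $r_1=0$ via the fundamental theorem of calculus after sending $\eps\tendsto 0^+$, with the $\theta_1$-integral contributing the factor $\pi$. The only cosmetic difference is in the handling of the smooth perturbation $\sm\vphi_D$: the paper keeps the factor $r_1^2\fdiff{r_1^2}\psi_D$ (which tends to $1$ as $r_1\tendsto 0$) inside the derivative so that no error term appears, whereas you pull the $\fdiff{r_1}\sm\vphi_D$ contribution out as an explicit $\BigO(\eps)$ remainder; both variants rest on the same integrability of $\frac{1}{\sect_{(i)}}$ and give the same limit.
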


  \begin{proof}
    As before, it suffices to consider the case where $D_i \cap V
    =\set{z_1 =0}$, $\res{\sect_{(i)}}_V =z_2 \dotsm z_{\sigma_V}$ and
    \begin{equation*}
      f = dz_1 \wedge g_i \sect_{(i)} \; .
    \end{equation*}
    Write also
    \begin{equation*}
      \xi =: dz_1 \wedge \xi_i
      \quad\text{ such that }\;\;
      \res{\xi_i}_{D_i \cap V} =\PRes[D_i](\frac{\xi}{\sect_D}) \cdot
      \sect_{(i)} =\rs*\xi_i \; .
    \end{equation*}
    Essentially the same computation as in Proposition
    \ref{prop:residue-product-X-to-lcS} yields 
    \begin{align*}
      \eps \int_V \inner{\xi}{f} \:e^{-\phi_D-\vphi_L}
      e^{-\eps\abs{\psi_D}}
      =&~\eps \int_V \frac{\inner{\xi_i}{g_i}
         \:e^{-\vphi_L}}{\sect_{(i)}} \wedge
         e^{-\eps\abs{\psi_D}}
         \frac{
         \pi\ibar\:dz_1 \wedge d\conj{z_1}
         }{\abs{z_1}^2}
      \\
      =&~\int_V \frac{\inner{\xi_i}{g_i}
         \:e^{-\vphi_L}}{\sect_{(i)} \:r_1^2 \fdiff{r_1^2} \psi_D} 
         \:d\paren{e^{-\eps\abs{\psi_D}}} \:
         \frac{d\theta_1}{2}
      \\
      \overset{\mathclap{\text{int.~by parts}}}=
       &~\quad\;\;
         -\int_V \fdiff{r_1} \paren{\frac{\inner{\xi_i}{g_i}
         \:e^{-\vphi_L}}{r_1^2 \fdiff{r_1^2} \psi_D} }
         \:\frac{e^{-\eps\abs{\psi_D}}}{\sect_{(i)}} \:dr_1 \:
         \frac{d\theta_1}{2}
      \\
      \mathclap{\xrightarrow{\eps \tendsto 0^+}\;\;}
       &~\quad\;
         -\int_V \fdiff{r_1} \paren{\frac{\inner{\xi_i}{g_i}
         \:e^{-\vphi_L}}{r_1^2 \fdiff{r_1^2} \psi_D} }
         \:\frac{1}{\sect_{(i)}} \:dr_1 \:
         \frac{d\theta_1}{2}
      \\
      =&~\pi \int_{\mathrlap{D_i \cap V}} \;\;\; \frac{\inner{\rs*\xi_i}{g_i}
         \:e^{-\vphi_L}}{\sect_{(i)}}
         =\pi \int_{D_i \cap V} \inner{\rs*\xi_i}{g_i \sect_{(i)}}
         \:e^{-\phi_{(i)}-\vphi_L} \; .
    \end{align*}
    Note that the convergence of the integral obtained right after
    integration by parts follows from the same reasoning as in
    Proposition \ref{prop:residue-product-X-to-lcS}.
  \end{proof}

  Proposition \ref{prop:residue-formula-classical-kernel} facilitates the
  following residue computation.

  \begin{prop} \label{prop:res-formula-dbar-exact-dot-harmonic}
    Given the decomposition $\lcc' = \bigcup_{p\in\Iset} \lcS$, let
    $u_p$ be a \emph{harmonic} $K_{\lcS} \otimes \res L_{\lcS}$-valued
    $(0,q)$-form on $\lcS$ with respect to the norm
    $\norm\cdot_{\lcS}$ for each $p \in \Iset$.
    Given also the decomposition $\lcc+1' = \bigcup_{b\in\Iset+1}
    \lcS+1[b]$, for any $\lcS$ and $\lcS+1[b]$ such that $\lcS+1[b]
    \subset \lcS$, let $\sgn{b:p}$ be the sign such that
    \begin{equation*}
      \PRes[\lcS+1[b]] =\sgn{b:p} \:\PRes[\lcS+1[b] | \lcS] \circ
      \PRes[\lcS] \; ,
    \end{equation*}
    where $\PRes[\lcS+1[b] | \lcS]$ denotes the Poincar\'e residue map
    from $\lcS$ to $\lcS+1[b]$.
    % For a given locally finite cover $\cvr V :=\set{V_i}_{i \in I}$ of
    % $X$ by \emph{admissible} open sets with respect to
    % $(\vphi_L,\psi_D)$ together with a partition of unity
    % $\set{\rho^i}_{i \in I}$ subordinate to it,
    With the finite cover $\cvr V$ and partition of unity
    $\set{\rho^i}_{i \in I}$ given in Section \ref{subsec:notation},
    let $\set{\gamma_{\idx 1.q}}_{\idx 1,q \in I}$ be a
    $\logKX[L]$-valued \v Cech $(q-1)$-cochain with respect to $\cvr
    V$ and set 
    \begin{gather*}
      \rs\gamma_{p; \:\idx 1.q} :=\PRes[\lcS](\frac{\gamma_{\idx 1.q}}{\sect_D})
      \cdot \sect_{(p)} \; , \quad
      v_{p} := \sum_{\idx 1,q \in I} \underbrace{
        \dbar\rho^{i_q} \wedge \dotsm
        \wedge \dbar\rho^{i_2} \cdot \rho^{i_1}
      }_{=: \: \paren{\dbar\rho}^{\idx q.1}} \rs*\gamma_{p;\:\idx 1.q}
      \quad\text{ on } \lcS \\
      \text{and }\quad
      \rs\gamma_{b; \:\idx 1.q} :=\PRes[\lcS+1[b]](\frac{\gamma_{\idx 1.q}}{\sect_D})
      \cdot \sect_{(b)} \; , \quad
      v_{b} := \sum_{\idx 1,q \in I} \paren{\dbar\rho}^{\idx q.1}
      \rs*\gamma_{b;\:\idx 1.q}
      \quad\text{ on } \lcS+1[b] \; .
    \end{gather*}
    Then, after setting $\iinner{\cdot}{\cdot}_{\lcS, \phi_{(p)}}
    :=\iinner{\cdot}{\cdot \:e^{-\phi_{(p)}}}_{\lcS}$ (and
    similarly for $\iinner{\cdot}{\cdot}_{\lcS+1[b], \phi_{(b)}}$), one has
    \begin{equation*}
      \sum_{p\in\Iset} \iinner{\dbar v_{p}}{
        u_p\sect_{(p)}}_{\lcS,\phi_{(p)}}
      =-\sigma \smashoperator[l]{\sum_{b\in\Iset+1}} \iinner{v_{b} \:}{\quad\;
        \smashoperator{\sum_{p\in\Iset \colon \lcS+1[b] \subset
            \lcS}} \;\;
        \sgn{b:p} \: \PRes[\lcS+1[b] | \lcS](\idxup{\diff\psi_{(p)}}.
         u_p) \cdot \sect_{(b)}
      }_{\lcS+1[b], \phi_{(b)}} \; ,
    \end{equation*}
    where $\psi_{(p)} :=\phi_{(p)} -\sm\vphi_{(p)}$ and
    $\sm\vphi_{(p)}$ is some smooth potential on $\Diff_p D$.
  \end{prop}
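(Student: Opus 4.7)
The plan is to carry out integration by parts on each $\sigma$-lc center $\lcS$, exploiting the harmonicity of $\sect_{(p)}u_p$ on $\lcS$ (via Lemma \ref{lem:su-harmonicity} applied with $\lcS$, $\Diff_p D$, and $\sect_{(p)}$ in place of $X$, $D$, and $\sect_D$) to annihilate the interior adjoint term, and then to extract the residue along $\Diff_p D = \sum_{b:\lcS+1[b]\subset\lcS}\lcS+1[b]$ via Proposition \ref{prop:residue-formula-classical-kernel} applied to $\lcS$ (in place of $X$) with divisor $\Diff_p D$ and potentials $\phi_{(p)},\psi_{(p)}$. The link between the $\sigma$-data $v_p$ on $\lcS$ and the $(\sigma+1)$-data $v_b$ on $\lcS+1[b]$ is provided by the composition law $\PRes[\lcS+1[b]] = \sgn{b:p}\,\PRes[\lcS+1[b]|\lcS]\circ\PRes[\lcS]$: a local computation on an admissible chart where $\sect_{(p)} = z_j\,\sect_{(b)}$ and $\{z_j=0\} = \lcS+1[b]\cap V\subset\lcS\cap V$ identifies the $\lcS+1[b]$-residue of $v_p$ with $\sgn{b:p}\,v_b$.

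Concretely, I fix $p\in\Iset$ and introduce the regularizing weight $e^{-\eps\abs{\psi_{(p)}}}$, which vanishes along $\Diff_p D$ for $\eps>0$ so that Stokes' theorem on $\lcS$ is unobstructed. By dominated convergence on the compact manifold $\lcS$,
\[
  \iinner{\dbar v_p}{u_p\sect_{(p)}}_{\lcS,\phi_{(p)}}
  = \lim_{\eps\to 0^+}\frac{\pi^\sigma}{(\sigma-1)!}\int_{\lcS}\inner{\dbar v_p}{u_p\sect_{(p)}}\,e^{-\phi_{(p)}-\vphi_L}\,e^{-\eps\abs{\psi_{(p)}}}.
\]
Moving $\dbar$ onto the second argument, the formal adjoint $\dbadj_{\phi_{(p)}+\vphi_L}(u_p\sect_{(p)})=0$ by harmonicity, so only the derivative of the cutoff survives; since $\abs{\psi_{(p)}}=-\psi_{(p)}$, this produces $-\frac{\eps\pi^\sigma}{(\sigma-1)!}\int_{\lcS}\inner{v_p}{\idxup{\diff\psi_{(p)}}.\,u_p\sect_{(p)}}\,e^{-\phi_{(p)}-\vphi_L}\,e^{-\eps\abs{\psi_{(p)}}}$. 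Proposition \ref{prop:residue-formula-classical-kernel} applied to $\lcS$ then converts the $\eps\to 0^+$ limit of this $\eps$-weighted integral into $\pi\sum_{b:\lcS+1[b]\subset\lcS}\int_{\lcS+1[b]}\inner{\sgn{b:p}v_b}{\PRes[\lcS+1[b]|\lcS](\idxup{\diff\psi_{(p)}}. u_p)\sect_{(b)}}\,e^{-\phi_{(b)}-\vphi_L}$. The combined prefactor $\frac{\pi^{\sigma+1}}{(\sigma-1)!}$ matches the normalization $\frac{\pi^{\sigma+1}}{\sigma!}$ of $\iinner{\cdot}{\cdot}_{\lcS+1[b],\phi_{(b)}}$ (cf.~\eqref{eq:residue-norm}) up to the factor $\sigma=\sigma!/(\sigma-1)!$; summing over $p$ and regrouping by $b$ then yields the claimed identity.

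The main obstacle will be the bookkeeping of signs and constants: the Poincar\'e-residue sign $\sgn{b:p}$ (coming from orientations of conormal bundles), the sign from $\abs{\psi_{(p)}}=-\psi_{(p)}$, and the consistency of the $\pi$-factors between $\lcS$ and $\lcS+1[b]$ must all line up to produce the final $-\sigma$ prefactor. A secondary technical point is justifying formal integration by parts at fixed $\eps>0$ despite the pole structure of $v_p$ along $\Diff_p D$; this is handled by a Friedrichs-mollifier argument analogous to the one in the proof of Lemma \ref{lem:su-harmonicity}, combined with the fact that the $\sect_{(p)}$-factor in $\rs*\gamma_{p;\:\idx 1.q}$ already makes $v_p$ vanish on $\Diff_p D$, so that the cutoff $e^{-\eps\abs{\psi_{(p)}}}$ controls the approach to $\Diff_p D$ in the integration by parts.
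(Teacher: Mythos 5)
Your proposal follows essentially the same route as the paper's proof: introduce the regularizing cut-off $e^{-\eps\abs{\psi_{(p)}}}$, integrate by parts and annihilate the interior term using the harmonicity of $u_p\sect_{(p)}$ on $\lcS$ (Lemma \ref{lem:su-harmonicity}), then identify the surviving $\eps$-weighted boundary term with a residue on each $\lcS+1[b]$ via Proposition \ref{prop:residue-formula-classical-kernel} with $(\lcS,\Diff_p D,\psi_{(p)})$ in place of $(X,D,\psi_D)$, keeping track of the normalization $\pi^\sigma/(\sigma-1)!$ versus $\pi^{\sigma+1}/\sigma!$ to extract the factor $\sigma$, and using the composition law $\PRes[\lcS+1[b]]=\sgn{b:p}\,\PRes[\lcS+1[b]|\lcS]\circ\PRes[\lcS]$ to rewrite the boundary data as $\sgn{b:p}\,v_b$. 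All of this matches the argument in the text, including the $(p,k)\mapsto(b,j)$ regrouping.

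One small but real inaccuracy in your last paragraph: you write that the $\sect_{(p)}$-factor in $\rs*\gamma_{p;\:\idx 1.q}$ makes $v_p$ vanish on $\Diff_p D$. It does not. The factor $\sect_{(p)}$ merely cancels the simple pole of $\PRes[\lcS](\gamma_{\idx 1.q}/\sect_D)$ along $\Diff_p D$, producing a generically non-vanishing holomorphic coefficient; indeed the paper explicitly remarks that $v_p$ is smooth on $\lcS$ but \emph{not} necessarily locally $L^2$ with respect to the weight $e^{-\phi_{(p)}}$. What actually makes Proposition \ref{prop:residue-formula-classical-kernel} applicable is not a vanishing of $v_p$ (which plays the role of $\xi$, and $\xi$ is permitted to be merely $\smooth_{*}$), but the fact that the coefficients of the \emph{other} argument, namely $\idxup{\diff\rho},[\idx 1.q].\,\bigl(\idxup{\diff\psi_{(p)}}.\,u_p\sect_{(p)}\bigr)$, land in $\smooth_{\lcS\,c}\cdot\aidlof|1|<\lcS>{\vphi_L}[\psi_{(p)}]$ because $\sect_{(p)}/z_{p(k)}$ generates $\res{\defidlof{\lcc+2'}}_{\lcS}$ locally; this is the verification the paper carries out. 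The integration by parts at fixed $\eps>0$ is then justified simply because the cut-off $e^{-\eps\abs{\psi_{(p)}}}$ vanishes to infinite order on $\Diff_p D$ and the integrand carries only a locally integrable $1/\sect_{(p)}$-singularity. This does not break your overall argument, but the supporting justification should be corrected.
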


  \begin{proof}
    Notice that $v_{p}$ is smooth on $\lcS$ but not necessarily
    locally $L^2$ with respect to the weight $e^{-\phi_{(p)}}$.
    An integration by parts is done via the use of Proposition
    \ref{prop:residue-formula-classical-kernel}, which yields 
    \begin{align*}
      &~\sum_{p\in \Iset} \iinner{\dbar v_{p}}{ u_p
        \sect_{(p)}}_{\lcS, \phi_{(p)}}
      \\
      \xleftarrow{\eps \tendsto 0^+}
      &~\sum_{p \in \Iset} \iinner{
        e^{-\eps \abs{\psi_{(p)}}} \:\dbar v_{p}
        }{ u_p \sect_{(p)}}_{\lcS, \phi_{(p)}}
      \\
      =&~\sum_{p \in \Iset} \paren{
         \cancelto{0 \;\;\;(\because~u_p \text{ harmonic, Lemma \ref{lem:su-harmonicity}})}{\iinner{
         \dbar\paren{e^{-\eps \abs{\psi_{(p)}}} \: v_{p}}
         }{ u_p \sect_{(p)}}_{\mathrlap{\lcS, \phi_{(p)}}}}
         \quad\;\; - \eps 
         \iinner{
         e^{-\eps \abs{\psi_{(p)}}} \:v_{p}
         }{\:\idxup{\diff\psi_{(p)}}.  u_p \sect_{(p)}}_{\lcS,
         \phi_{(p)}}
         }
      \\
      =&~-\sum_{p \in \Iset} \sum_{\idx 1,q \in I} \eps \:
         \iinner{
         e^{-\eps \abs{\psi_{(p)}}} \: % \paren{\dbar\rho}^{\idx q.1}
         \rs*\gamma_{p;\:\idx 1.q}
         }{\:
         \idxup{\diff\rho},[\idx 1.q] .
         \paren{\idxup{\diff\psi_{(p)}}.  u_p \sect_{(p)}}
         }_{\lcS, \phi_{(p)}}
      \\
      \xrightarrow[\text{Prop.~\ref{prop:residue-formula-classical-kernel}}]{\eps
      \tendsto 0^+} 
      &~-\smashoperator[l]{\sum_{\idx 1,q \in I}} \sum_{p \in \Iset}
        \sum_{k=\sigma +1}^{\mathclap{\sigma_{V_{\idx 1.q}}}} \sigma
        \iinner{
        \PRes[p(k)](
        \frac{\rs*\gamma_{p;\:\idx 1.q}}{\sect_{(p)}}
        )
        }{\:
        \idxup{\diff\rho},[\idx 1.q] .
        \PRes[p(k)](\idxup{\diff\psi_{(p)}}.  u_p)
        }_{\lcS \cap \set{z_{p(k)} =0}}
        \; ,
    \end{align*}
    where $\idxup{\diff\rho},[\idx 1.q] . \cdot$ is the adjoint
    of $\paren{\dbar\rho}^{\idx q.1} \cdot$, and $\PRes[p(k)]$ denotes
    the Poincar\'e residue map from $\lcS$ to $\lcS \cap \set{z_{p(k)}=0}$. 
    The last limit is justified as follows.
    On the admissible open set $V_{\idx 1.q}$, consider a holomorphic
    coordinate system $(z_1, \dots, z_n)$ such that $\lcS \cap V_{\idx
    1.q}
    =\set{z_{p(1)} = \dotsm =z_{p(\sigma)} =0}$ and
    $\sect_{(p)} =z_{p(\sigma+1)} \dotsm z_{p(\sigma_V)}$ (write
    $\sigma_{V}$ for $\sigma_{V_{\idx 1.q}}$ for convenience).
    Note that
    \begin{equation*}
      \diff\psi_{(p)} =\sum_{k =\sigma +1}^{\sigma_V}
      \frac{dz_{p(k)}}{z_{p(k)}} -\diff\sm\vphi_{(p)} \quad\text{ on }
      V_{\idx 1.q} \; .
    \end{equation*}
    It follows that, on $\lcS \cap V_{\idx 1.q}$,
    \begin{equation*}
      \text{coef.~of }\:
      \idxup{\diff\rho},[\idx 1.q].
      \paren{\idxup{\diff\psi_{(p)}}.  u_p \sect_{(p)}}
      \in
      \smooth_{\lcS \:c} \cdot\res{\defidlof{\lcc+2'}}_{\lcS}
      \begin{aligned}[t]
        &=\smooth_{\lcS \:c} \cdot\mtidlof<\lcS>{\vphi_L} \cdot
        \res{\defidlof{\lcc+2'}}_{\lcS} \;\;\footnotemark
        \\
        &=\smooth_{\lcS \:c} \cdot\aidlof|1|<\lcS>{\vphi_L}[\psi_{(p)}]
      \end{aligned}
    \end{equation*}%
    \footnotetext{
      Recall that $\defidlof{\lcc+2'}$ is generated on $X$ by
      $\sect_{(b)}$ treated as local
      functions for all $b \in \Iset+1$.
      On an admissible open set $V$, one has $\defidlof{\lcc+2'}
      =\genbyd{z_{b(\sigma+2)} \dotsm
        z_{b(\sigma_V)}}{b \in \Iset+1 \text{ such that } \lcS+1[b] \cap
        V \neq \emptyset}$.
      % (see page
      % \pageref{page:notation-permutation-index} for the notation).
    }%
    and, therefore, one can apply Proposition
    \ref{prop:residue-formula-classical-kernel} (with $\lcS$ in place
    of $X$, $\psi_{(p)}$ in place of $\psi_D$) to each inner product
    $\eps \iinner{e^{-\eps \abs{\psi_{(p)}}} \dotsm}{\: \dotsm
      \idxup{\diff\psi_{(p)}} . \dotsm \sect_{(p)}}_{\lcS,\phi_{(p)}}$.
    Note also that the factor $\sigma$ comes from the normalisation of
    the norm on each lc center ($\norm\cdot_{\lcS}^2
    =\frac{\pi^\sigma}{(\sigma -1)!} \int_{\lcS} \dotsm$ and
    $\norm\cdot_{\lcS+1[b]}^2 =\frac{\pi^{\sigma+1}}{\sigma!}
    \int_{\lcS+1[b]} \dotsm$).

    On each admissible open set $V_{\idx 1.q}$, the intersection $\lcS
    \cap \set{z_{p(k)} = 0}$ is a $(\sigma+1)$-lc center $\lcS+1[b_{p,k}]
    \cap V_{\idx 1.q}$ ($\neq \emptyset$), uniquely determined by the
    choices of $p\in \Iset$ (such that $\lcS \cap V_{\idx 1.q} \neq
    \emptyset$, so $\binom{\sigma_V}{\sigma}$ choices) and $k
    =\sigma+1, \dots, \sigma_V$ (so $\sigma_V-\sigma$ choices).
    To get an indexing in terms of $b \in \Iset+1$ (such that
    $\lcS+1[b] \cap V_{\idx 1.q} \neq \emptyset$, so
    $\binom{\sigma_V}{\sigma +1}$ choices), note that each $\lcS+1[b]
    \cap V_{\idx 1.q}$ is contained in $\sigma +1$ distinct
    $\sigma$-lc centers $\lcS[p_{b,j}]$ for $j=1,\dots,\sigma+1$
    (apparently, $\sigma +1$ choices) such that
    \begin{equation*}
      \lcS+1[b] \cap V_{\idx 1.q} = \lcS[p_{b,j}] \cap \set{z_{b(j)} = 0} \; .
    \end{equation*}
    (One can verify $\sum_{p \in \Iset} \sum_{k=\sigma
      +1}^{\sigma_{V}} \dotsm = \sum_{b \in
      \Iset+1} \sum_{j=1}^{\sigma +1} \dotsm$ by first noting that
    $\binom{\sigma_V}{\sigma} (\sigma_V -\sigma)
    =\binom{\sigma_V}{\sigma +1} (\sigma+1)$.)
    With such choice of indexing, one has
    \begin{equation*}
      \frac{\rs*\gamma_{b;\: \idx 1.q}}{\sect_{(b)}}
      :=\PRes[\lcS+1[b]](\frac{\gamma_{\idx 1.q}}{\sect_D})
      =\sgn{b:p_{b,j}} \:
      \PRes[b(j)](\frac{\rs*\gamma_{p_{b,j};\:\idx
          1.q}}{\sect_{(p_{b,j})}})
    \end{equation*}
    (noticing that % $\sect_{(b)} =\sect_{(\sigma+1 : b)}$,
    % $\sect_{(p_{b,j})} =\sect_{(\sigma : p_{b,j})}$ and
    $\sect_{(p_{b,j})} = z_{b(j)} \sect_{(b)}$).
    As a result, the expression in question becomes
    \begin{align*}
      &-\smashoperator[l]{\sum_{\idx 1,q \in I}} \sum_{b \in \Iset+1}
        \sum_{j=1}^{\sigma +1} \sigma
        \iinner{ \sgn{b:p_{b,j}}\:
        \frac{\rs*\gamma_{b;\:\idx 1.q}}{\sect_{(b)}}
        }{\: 
        \idxup{\diff\rho},[\idx 1.q] .
        \PRes[b(j)](\idxup{\diff\psi_{(p_{b,j})}} . u_{p_{b,j}})
        }_{\lcS+1[b]}
      \\
      =&-\smashoperator[l]{\sum_{\idx 1,q \in I}}
        \sum_{b \in \Iset+1}
        \sigma
        \iinner{
        \paren{\dbar\rho}^{\idx q.1} \rs*\gamma_{b;\:\idx 1.q}
        \:}{  \sum_{j=1}^{\sigma +1} \sgn{b:p_{b,j}}\:
        \PRes[b(j)](\idxup{\diff\psi_{(p_{b,j})}} . u_{p_{b,j}})
        \cdot \sect_{(b)}
        }_{\lcS+1[b], \phi_{(b)}}
      \\
      =&-\sigma \sum_{b \in \Iset+1} \iinner{
        v_b
        \:}{\quad\;
        \smashoperator{\sum_{p\in\Iset \colon \lcS+1[b] \subset
        \lcS}} \;\;
        \sgn{b:p}\:
        \PRes[\lcS+1[b] | \lcS](\idxup{\diff\psi_{(p)}}.  u_{p})
        \cdot \sect_{(b)}
        }_{\lcS+1[b], \phi_{(b)}} \; . \qedhere
    \end{align*}
  \end{proof}

}

\subsection{Restriction of harmonic differential forms to hypersurfaces}\label{subsec:harmonic}

{
  
  Let $(X,\omega)$ be a K\"ahler manifold equipped with a holomorphic
  line bundle $L$ equipped with a smooth potential $\varphi_L$ such
  that $\ibddbar\varphi_L\ge0$ and let $D$ be an snc divisor in $X$
  written as 
  \begin{equation*}
    D=\sum_{p\in I_D}D_p \; ,
  \end{equation*}
  where $D_p$ is an irreducible component for $p\in I_D$.
  We define the map $\HRes_p \colon \mathscr
  A_X^{0,q}(X,K_X\otimes L)\rightarrow \mathscr
  A_{D_p}^{0,q-1}(D_p,K_{D_p}\otimes L\vert_{D_p})$ by 
  \begin{equation*}
    \HRes_p(u)
    =
    \PRes[D_p](\idxup{\partial\psi_D} . u) \;\;\;\text{for}\;\;\;p\in
    I_D \; ,
  \end{equation*}
  where $\mathcal{R}_{D_p}$ is the Poincar\'e residue map (see, for
  example, \cite{Griffiths&Harris}*{p.147} or \cite{Kollar_Sing-of-MMP}*{\S 4.18}). 
  Notice that, as in \cite{Chan&Choi_injectivity-I}*{\textsection2.6},
  the map $\mathcal R_{D_p}$ is extended to send sections of
  $K_X\otimes\overline{\bold\Omega}_X^q$ to those of
  $K_{D_p}\otimes\overline{\bold\Omega}_X^q\vert_{D_p}$. 
  Let $(U;z^1,\dots,z^n)$ be a local holomorphic coordinate system
  around $x\in D_p\subset X$ satisfying 
  \begin{enumerate}[label=(\roman*), ref=\roman*]
  \item  \label{item:admissible-open-U} % [(\romannumeral1)]
    $D_p\cap U=\{z\in U:z^1=0\}$ and 
    $D\cap U=\set{z^1\cdots z^{\sigma_U}=0}$,
    and
  \item  \label{item:psi_D-in-admissible-open-U} % [(\romannumeral2)]
    $\psi_D=\sum_{j=1}^{\sigma_U}
    \log\abs{z^j}^2-\sm\varphi_D$ on $U$.
  \end{enumerate}
  Since $\del\psi_D=\sum_{j=1}^{\sigma_U}\frac{dz^j}{z^j}-\del\sm\varphi_D$, it follows that
  \begin{equation*}
    \HRes_p(u)
    =
    \mathcal{R}_{D_p}\left(\idxup{\partial\psi_D}.u\right)
    =
    \mathcal{R}_{D_p}\paren{\idxup{\frac{dz^1}{z^1}}.u}
    =
    \paren{\idxup{dz^1}.\widetilde u_p}\big\vert_{D_p} \; ,
  \end{equation*}
  where $\rs u_p := \fdiff{z^1} \ctrt u$ (so $u=dz^1\wedge\widetilde{u}_p$).
  In particular, $\HRes_p$ does not depend on the
  choice of the Hermitian metric $\sm\varphi_{D}$. 
  It follows from the above formula that
  $\HRes_p(u)$ is actually a $K_{D_p}\otimes
  L\vert_{D_p}$-valued $(0,q-1)$-form on $D_p$ (not only a
  $\overline{\bold\Omega}_X^{q-1}\vert_{D_p}$-valued section).

  First we notice that $\dfadj$-closedness is preserved by
  $\HRes_p$ on a K\"ahler manifold.
  \begin{prop} \label{prop:harmonic-residue}
    If $u$ is a $\dfadj$-closed $K_X\otimes L$-valued $(0,q)$-form on
    $X$, then $\HRes_p(u)$ is a $\dfadj$-closed
    $K_{D_p}\otimes L\vert_{D_p}$-valued $(0,q-1)$-form on $D_p$. 
  \end{prop}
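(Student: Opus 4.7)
My plan is to verify $\dfadj\HRes_p(u) = 0$ pointwise, by a direct local computation at an arbitrary $x \in D_p$. Choose an admissible chart $(U; z^1, \dots, z^n)$ around $x$ satisfying (\ref{item:admissible-open-U}) and (\ref{item:psi_D-in-admissible-open-U}), and arrange K\"ahler normal coordinates at $x$; after multiplying $L$ by a local holomorphic gauge, one may further assume $\varphi_L(x) = 0$ and $\partial\varphi_L(x) = 0$. Under these normalisations, both $\dfadj$ acting on $K_X \otimes L$-valued $(0,q)$-forms on $X$ and $\dfadj$ acting on $K_{D_p} \otimes L\vert_{D_p}$-valued $(0,q-1)$-forms on $D_p$ reduce at $x$ to the flat expression $-\sum_k \partial_{z^k}(\partial_{\bar z^k} \ctrt\, \cdot\,)$ acting on the local coefficients (with the usual permutation signs).

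Expanding $u = dz \wedge \sum_{|J|=q} u_J\, d\bar z^J$ with $dz = dz^1 \wedge \dots \wedge dz^n$, the formula $\HRes_p(u) = (\idxup{dz^1}.\widetilde u_p)\vert_{D_p}$ from the text gives the explicit expression
\[
\HRes_p(u) = dz^2\wedge\cdots\wedge dz^n\big\vert_{D_p} \wedge \smashoperator{\sum_{\substack{K\subset\{2,\dots,n\} \\ |K|=q-1}}} \varepsilon_{\{1\}\cup K,\,1}\, u_{\{1\}\cup K}(0,z^2,\dots,z^n)\, d\bar z^K\big\vert_{D_p},
\]
where $\varepsilon_{J,k} = \pm 1$ is the permutation sign defined by $d\bar z^J = \varepsilon_{J,k}\, d\bar z^k \wedge d\bar z^{J\setminus\{k\}}$. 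The hypothesis $\dfadj u(x) = 0$ amounts, in the coefficient of each $d\bar z^J$ with $|J| = q-1$, to the linear relation $\sum_{k\notin J} \varepsilon_{J\cup\{k\},\,k}\, \partial_{z^k} u_{J\cup\{k\}}(x) = 0$. Restricting to multi-indices of the form $J = \{1\}\cup M$ with $M \subset \{2,\dots,n\}$ and $|M| = q-2$ is expected to yield precisely the vanishing of each coefficient of $d\bar z^M$ in $\dfadj_{D_p}\HRes_p(u)(x)$.

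The main obstacle is purely combinatorial: verifying that the permutation signs produced by $\HRes_p$ combine with those from the $\dfadj_{D_p}$-formula to match, up to a sign independent of the summation index $k$, with those in the $\dfadj$-formula on $X$. A direct count shows that $\varepsilon_{M\cup\{k\},\,k}\cdot\varepsilon_{\{1\}\cup M\cup\{k\},\,1}$ and $\varepsilon_{\{1\}\cup M\cup\{k\},\,k}$ differ by a uniform sign, so the ``normal-including'' relations ($J\ni 1$) extracted from $\dfadj u(x) = 0$ are exactly what is needed to force $\dfadj_{D_p}\HRes_p(u)(x) = 0$. The remaining ``tangential'' relations ($J\not\ni 1$) only involve $\partial_{z^1}$-derivatives of the $u_J$'s, which are transverse to $D_p$ and play no role in this conclusion. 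Since $x \in D_p$ was arbitrary, the proposition follows.
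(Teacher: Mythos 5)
Your reduction rests on the claim that one can ``arrange K\"ahler normal coordinates at $x$'' compatible with condition (\ref{item:admissible-open-U}), so that both $\dfadj$ on $X$ and $\dfadj$ on $D_p$ become the flat coordinate expression at $x$. This is the gap: in general one \emph{cannot} simultaneously have $D_p = \{z^1 = 0\}$ and $dg_{i\bar j}(x) = 0$ for all $1 \le i,j \le n$. The usual quadratic correction $z^i \mapsto z^i - \tfrac12 \Gamma^i_{jk}(x) z^j z^k$ that kills $dg(x)$ moves $\{z^1=0\}$ off $D_p$ unless $\Gamma^1_{\alpha\beta}(x) = 0$ for $\alpha,\beta \ge 2$ --- that is, unless the second fundamental form of $D_p$ in $X$ vanishes at $x$, which is a genuine geometric constraint, not something you can arrange. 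Equivalently, the transversal first derivatives $\partial_\gamma g_{\alpha\bar 1}(x)$ (and $\partial_\gamma g_{1\bar 1}(x)$) are obstructions that cannot be normalized away while keeping $D_p$ a coordinate hyperplane. Since $\HRes_p$ is expressed through the coordinate presentation $D_p = \{z^1 = 0\}$, you must keep that presentation, and then $\dfadj$ on $X$ does \emph{not} reduce to its flat form at $x$.

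The paper's proof is structured precisely to deal with this. It normalizes only $g_{i\bar j}(x) = I_n$ and $dg_{\alpha\bar\beta}(x) = 0$ for $2 \le \alpha, \beta \le n$ --- i.e.\ normal coordinates on $D_p$, \emph{not} on $X$ --- and then carries the full $\nabla$ in the coordinate computation. After the dust settles, what survives beyond $-g^{\bar 1 1}(\dfadj u)_{\bar 1 \bar{\boldsymbol\beta}_{q-2}}$ is exactly a term
\begin{equation*}
  g^{\bar 1 1}\, g^{\bar\gamma\gamma}\, g^{\bar\alpha\alpha}\, \partial_\gamma g_{\alpha\bar 1}\cdot u_{\bar\alpha\bar\gamma\bar{\boldsymbol\beta}_{q-2}}
\end{equation*}
coming from the second fundamental form, and this vanishes only because $\partial_\gamma g_{\alpha\bar 1}$ is symmetric in $\alpha,\gamma$ (the K\"ahler condition) while $u_{\bar\alpha\bar\gamma}$ is antisymmetric. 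That contraction is the genuine content of the proposition and the place where the K\"ahler hypothesis is actually used; the flat-model reduction in your proposal silently assumes this term away. Your combinatorial sign bookkeeping for the ``normal-including'' relations would be correct in the flat model, but you need to first show that the curvature corrections on $X$ and on $D_p$ match or cancel --- which is what the paper's explicit computation (and the K\"ahler symmetry) accomplishes.
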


  \begin{proof}
    It is enough to show that it vanishes at the given point $x\in D_p$.
    Let $(z^1,\dots,z^n)$ be a local holomorphic coordinate system around
    $x$ in $X$ satisfying \eqref{item:admissible-open-U} % (\romannumeral1)
    and \eqref{item:psi_D-in-admissible-open-U}. % (\romannumeral2).
    Since $(D_p,\omega\vert_{D_p})$ is a smooth $(n-1)$-dimensional
    K\"ahler manifold, by a linear change of coordinates
    $(z^1,\ldots,z^n)$ and a quadratic change of coordinates in
    $(z^2,\dots,z^n)$ of $D_p$ we may assume that
    \begin{enumerate}[resume*]
    \item % [(\romannumeral3)]
      $g_{i\bar j}(x)=I_n$ where $I_n$ is the $n\times n$ identity matrix.
    \item % [(\romannumeral4)]
      $dg_{\alpha\bar\beta}(x)=0$ for $2\le\alpha\le n$ and $2\le\beta\le n$.
    \end{enumerate}
    Since $\displaystyle\idxup{dz^1}=g^{\bar
      j1}\pd{}{\overline{z^j}}$ (under Einstein summation convention),
    we have
    \begin{equation}\label{E:local_expression}
      dz^1 \wedge \paren{\idxup{dz^1} . \widetilde u_p}_{\bar j_1,\ldots,\bar j_{q-1}}
      =
      g^{\bar j1}
      u_{\bar j\bar j_1,\ldots,\bar j_{q-1}} \; .
    \end{equation}
    % where $\beta_1,\ldots,\beta_{q-1}$ run from $2$ to $n$.
    For the sake of convenience, let the Latin indices $i,j,k,...$ run
    from $1$ to $n$ and let the Greek indices
    $\alpha,\beta,\gamma,...$ run from $2$ to $n$ in this proof. 
    Let $\varphi_{K_X}$ and $\varphi_{K_{D_p}}$ be respectively the
    potentials on $K_X$ and $K_{D_p}$ induced by the K\"ahler metric
    $\omega$, which are written as 
    \begin{equation*}
      \varphi_{K_X}
      =
      \log\det\paren{g_{i\bar j}}_{1\le i,j\le n}
      \;\;\;\text{and}\;\;\;
      \varphi_{K_{D_p}}
      =
      \log\det\paren{g_{\alpha\bar\beta}}_{2\le\alpha,\beta\le n} \; .
    \end{equation*}
    This yields, at the given point $x\in D_p$,
    \begin{align*}
      \del_\gamma\varphi_{K_X}
      &=
	\del_\gamma\log\det g
	=
	\Tr\paren{\del_\gamma g\cdot g^{-1}}
      \\
      &=
	\sum_{i,j=1}^n\pd{g_{i\bar j}}{z^\gamma} g^{\bar j i}
	\;\;\overset{\mathclap{(\text{at } x)}}=\;\;
	g^{\bar11}\del_\gamma g_{1\bar1}
	+
	\sum_{\alpha,\beta=2}^n\pd{g_{\alpha\bar\beta}}{z^\gamma}g^{\alpha\bar\beta}
      \\
      &=
	g^{\conj11}\del_\gamma g_{1\conj1}
	+
	\del_\gamma\log\det\paren{g\vert_{D_p}}
	=
	g^{\conj11}\del_\gamma g_{1\conj1}
	+
	\del_\gamma\varphi_{K_{D_p}}
	\;\;\overset{\mathclap{(\text{at } x)}}=\;\;
	g^{\conj11}\del_\gamma g_{1\conj1} \; .
    \end{align*}
    % This implies that
    % \begin{equation*}
    %   \del_\ell\varphi_{K_X}
    %   =
    %   \del_\ell\log\det\paren{g_{i\bar j}}_{1\le i,j\le n}
    %   =
    %   \sum_{i,j=1}^n\pd{g_{i\bar j}}{z^\ell}g^{\bar ji}
    %   \;\;\;\text{and}
    %   \;\;\;
    %   \del_\ell\varphi_{K_{D_p}}
    %   =
    %   \sum_{\alpha,\beta=2}^n
    %   \pd{g_{\alpha\bar\beta}}{z^\ell}g^{\bar\beta\alpha}.	
    % \end{equation*}
    % Thus we have
    % \begin{equation*}
    %   \sum_{i,j}\pd{g_{i\bar j}}{z^\ell}g^{\bar ji}
    %   =
    %   \del_lg_{1\bar1}
    %   +
    %   \sum_{\alpha=2}^n\pd{g_{\alpha\bar\alpha}}{z^l}g^{\alpha\bar\alpha}
    %   =
    %   \del_lg_{1\bar1}
    %   +
    %   \del_l\varphi_{K_{D_p}}
    %   \;\;\;
    %   \text{and}
    %   \;\;\;
    %   \del_\beta\varphi_{K_{D_p}}=0\;\;\;\text{at}\;\;x.
    % \end{equation*}
    % It follows from \eqref{E:local_expression} and the definition of $\dfadj$ (cf.~\cite{Siu}) that
    % for any multi-indices $\ov{\boldsymbol\beta}_{q-2}=(\bar\beta_1,\ldots,\bar\beta_{q-2})$,
    \newcommand{\bbeta}{{\boldsymbol{\beta}}}%
    \newcommand{\KDp}{{\smash[b]{K_{D_p}}}}%
    Choose a local frame of $L$ in a neighbourhood of $x$ in $X$
    such that
    \begin{equation*}
      \diff\vphi_L(x) = 0  \; .
    \end{equation*}
    It follows from \eqref{E:local_expression} and the definition of
    $\dfadj$ on $D_p$ (see, for example,
    \cite{Siu}*{(1.3.2)}) that, for any 
    multi-indices ${\bbeta}_{q-2} =(\idx[\beta]1,{q-2})$ and at the
    given point $x \in D_p$,
    \begin{align*}
      dz^1 \wedge \paren{\dfadj \HRes_p(u)}_{\conj\bbeta_{q-2}}
      &=
        -g^{\conj\beta\gamma}
	\nabla_\gamma 
	\paren{
        g^{\alert{\conj j} 1}
        u_{\alert{\conj j} \conj\beta\ov{\boldsymbol\beta}_{q-2}}
	}
      \\
      &=-g^{\conj\beta \gamma}\diff_\gamma \paren{g^{\alert{\conj j} 1}
        u_{\alert{\conj j} \conj\beta \conj\bbeta_{q-2}}}
        +g^{\conj\beta \gamma} \smash[t]{\cancelto{0}{\paren{
        \diff_\gamma \vphi_\KDp +\diff_\gamma \vphi_L
        }}}
        \cdot g^{\alert{\conj j} 1} u_{\alert{\conj j} \conj\beta \conj\bbeta_{q-2}}
      \\
      &=-g^{\conj\beta \gamma} g^{\alert{\conj j} 1} \diff_\gamma
        u_{\alert{\conj j} \conj\beta \conj\bbeta_{q-2}}
        -g^{\conj\beta \gamma} \diff_\gamma g^{\alert{\conj j} 1}
        \cdot u_{\alert{\conj j} \conj\beta \conj\bbeta_{q-2}}
      \\
      &=-g^{\conj\beta \gamma} g^{\conj 1 1}
        \diff_\gamma u_{\conj 1 \conj\beta \conj\bbeta_{q-2}}
        +g^{\conj\beta \gamma} g^{\alert{\conj j k}} \diff_\gamma
        g_{\alert k \conj 1} \cdot g^{\conj 1 1} u_{\alert{\conj j}
        \conj\beta \conj\bbeta_{q-2}}
      \\
      &=g^{\conj 1 1} \paren{
        -g^{\conj\beta \gamma} \diff_\gamma
        u_{\conj 1 \conj\beta \conj\bbeta_{q-2}}
        +g^{\conj\beta\gamma} g^{\conj 1 1} \diff_\gamma
        g_{1\conj 1} \cdot
        u_{\conj 1 \conj\beta \conj\bbeta_{q-2}}
        +g^{\conj\beta\gamma} g^{\alert{\conj j} \alpha}
        \diff_\gamma g_{\alpha \conj 1} \cdot
        u_{\alert{\conj j} \conj\beta \conj\bbeta_{q-2}}
        }
      \\
      &=g^{\conj 1 1} \paren{
        g^{\alert{\conj k j}} \diff_{\alert{j}}
        u_{\alert{\conj k} \conj 1 \conj\bbeta_{q-2}}
        -g^{\alert{\conj k j}} \diff_{\alert{j}}\vphi_{K_X} \cdot
        u_{\alert{\conj k} \conj 1 \conj\bbeta_{q-2}}
        }
        +g^{\conj 1 1} g^{\conj \gamma \gamma} g^{\conj\alpha
        \alpha} \diff_{\gamma} g_{\alpha \conj 1} \cdot
        u_{\conj\alpha \conj\gamma \conj\bbeta_{q-2}}
      \\
      &=-g^{\conj 1 1} \paren{\dfadj u}_{\conj 1 \conj\bbeta_{q-2}}
        +g^{\conj 1 1} g^{\conj \gamma \gamma} g^{\conj\alpha
        \alpha} \diff_{\gamma} g_{\alpha \conj 1} \cdot
        u_{\conj\alpha \conj\gamma \conj\bbeta_{q-2}} \; .
    \end{align*}
    Since $\del_\gamma g_{\alpha\conj1}$ is symmetric in $\alpha,
    \gamma$ (for $X$ being K\"ahler) while $u_{\conj\alpha\conj\gamma
      \conj{\bbeta}_{q-2}}$ is anti-symmetric in $\alpha, \gamma$, the
    last term in the expression above vanishes.
    As $\dfadj u = 0$ on $X$ by assumption, the proof is thus
    completed after applying $\fdiff{z^1} \ctrt$ to both sides. \qedhere
  \end{proof}

  Furthermore, we claim that, if
  % $u\in\mathcal{H}^{n,q}(X;L)_{\varphi_L}$ with
  % $\ibddbar\varphi_L\ge0$
  $u$ satisfies $\dbar u = 0$ and $\nabla^{(0,1)}u = 0$, then
  $\HRes_p(u)$ is $\dbar$-closed.
  % One can notice that $u$ is $\dbar$-closed, then so is
  % $\HRes_p(u)$. 
  This is shown via the following formula, which is a special case and
  a slight variant of \cite{Donnelly&Xavier}*{(2.4)} and
  \cite{Ohsawa&Takegoshi-spectral_seq}*{Prop.~1.5} (see also
  \cite{Takegoshi_higher-direct-images}*{(1.9)} and
  \cite{Matsumura_injectivity-Kaehler}*{Lemma 2.1}).

  \newcommand{\lcSb}{\lcS+1[b]}
  \newcommand{\idxj}{\idx[\conj j]}
  
  \begin{lemma}[cf.~\cite{Donnelly&Xavier}*{(2.4)},
    \cite{Ohsawa&Takegoshi-spectral_seq}*{Prop.~1.5},
    \cite{Takegoshi_higher-direct-images}*{(1.9)} and
    \cite{Matsumura_injectivity-Kaehler}*{Lemma
      2.1}] \label{lem:commutator-dbar-ctrt}
    Let $\vphi$ be a smooth function and $u$ be a smooth
    ($K_X$-valued) $(0,q)$-form on a K\"ahler manifold.
    They satisfy the formula
    \begin{equation*}
      \dbar\paren{\idxup{\diff\vphi}.  u}
      =\idxup{\ibddbar\vphi} . u
      -\idxup{\diff\vphi} . \paren{\dbar u}
      +\idxup{\diff\vphi} \cdot \nabla^{(0,1)}_\bullet u \; ,
    \end{equation*}%
    or, when a local holomorphic coordinate system is fixed and
    the Einstein summation convention is applied, 
    \begin{equation*}
      \paren{\dbar\paren{\idxup{\diff\vphi} . u}}_{\conj J_{q}}
      =\sum_{\nu=1}^q \diff^{\conj\ell} \diff_{\conj j_\nu} \vphi \:
      u_{\idxj 1[\dotsm (\conj \ell)_\nu].q}
      -\diff^{\conj\ell}\vphi  \:\paren{\dbar u}_{\conj\ell\conj J_q}
      +\diff^{\conj\ell}\vphi \:\nabla_{\conj\ell} u_{\conj J_q} 
    \end{equation*}
    for any multi-indices $J_q = (\idx[j]1,q)$, pointwisely.
  \end{lemma}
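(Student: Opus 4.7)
The plan is to prove the identity by a direct computation in local holomorphic coordinates, exploiting the K\"ahler condition to freely interchange ordinary and Chern-connection covariant derivatives. Set $\alpha := \idxup{\diff\vphi} . u$, so that its components are given by $\alpha_{\conj J_{q-1}} = \diff^{\conj\ell}\vphi \cdot u_{\conj\ell \conj J_{q-1}}$. Since on a K\"ahler manifold the $\dbar$ of a $K_X$-valued anti-holomorphic form can be written as the alternation of covariant derivatives (the mixed Christoffel symbols vanishing), one has
\[
  \paren{\dbar\alpha}_{\conj J_q}
  = \sum_{\nu=1}^{q} (-1)^{\nu-1} \nabla_{\conj j_\nu} \alpha_{\conj j_1 \dotsm \widehat{\conj j_\nu} \dotsm \conj j_q} \; .
\]
I then apply the Leibniz rule to each term, using $\nabla_{\conj m} g^{\conj\ell k} = 0$ and $\nabla_{\conj m}\diff_k\vphi = \diff_{\conj m}\diff_k\vphi$ (both consequences of the K\"ahler condition), to split each summand into a ``Hessian piece'' in which $\nabla_{\conj j_\nu}$ falls on $\diff^{\conj\ell}\vphi$ and a ``covariant-derivative piece'' in which it falls on $u$.

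For the covariant-derivative piece, I would invoke the basic identity
\[
  \paren{\dbar u}_{\conj\ell \conj J_q}
  = \nabla_{\conj\ell} u_{\conj J_q} + \sum_{\nu=1}^{q} (-1)^{\nu} \nabla_{\conj j_\nu} u_{\conj\ell \conj j_1 \dotsm \widehat{\conj j_\nu} \dotsm \conj j_q} \; ,
\]
which rearranges the relevant sum into the expression $\diff^{\conj\ell}\vphi \:\nabla_{\conj\ell} u_{\conj J_q} -\diff^{\conj\ell}\vphi \paren{\dbar u}_{\conj\ell \conj J_q}$; this reproduces the last two summands on the right-hand side of the claim. For the Hessian piece, the antisymmetry of $u$ in its $q$ indices lets me slide the contracted index $\conj\ell$ from position $1$ to position $\nu$ at the cost of a sign $(-1)^{\nu-1}$, which cancels the $(-1)^{\nu-1}$ produced by $\dbar$; each resulting term takes the form $\paren{\diff^{\conj\ell}\diff_{\conj j_\nu}\vphi} u_{\conj j_1 \dotsm (\conj\ell)_\nu \dotsm \conj j_q}$, and summing over $\nu$ gives precisely the contraction $\idxup{\ibddbar\vphi} . u$ asserted in the statement.

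The main obstacle is purely combinatorial: keeping track of which slot the contracted index $\conj\ell$ occupies at every stage, and ensuring that the signs arising from the alternating $\dbar$-formula match the signs produced by antisymmetrizing the tensor components of $u$. The K\"ahler hypothesis enters only in the routine fact that $\nabla_{\conj m}\diff^{\conj\ell}\vphi = \diff^{\conj\ell}\diff_{\conj m}\vphi$, via vanishing of mixed Christoffel symbols and parallelism of the metric. The coordinate-free version of the identity then follows immediately from the coordinate version by the tensorial nature of all the operations involved.
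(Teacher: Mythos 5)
Your proposal is correct and follows essentially the same route as the paper's proof: both expand $\dbar$ of the contraction via the alternating-sum formula in local coordinates, apply Leibniz to separate a Hessian piece from a covariant-derivative piece, slide the contracted index to position $\nu$ to form the Ricci-curvature-free Hessian contraction, and reassemble the covariant-derivative terms using the componentwise identity for $\paren{\dbar u}_{\conj\ell\conj J_q}$ (the paper realizes this by adding and subtracting $\diff^{\conj\ell}\vphi\,\nabla_{\conj\ell}u_{\conj J_q}$; your phrasing invokes the identity directly, which is the same step). The bookkeeping of signs you describe matches the paper's computation.
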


  \begin{proof}
    A direct computation yields
    \begin{align*}
      \paren{\dbar\paren{\idxup{\diff\vphi} . u}}_{\conj
      J_{q}}
      &=\sum_{\nu=1}^q (-1)^{\nu-1} \diff_{\conj j_\nu}
        \paren{\idxup{\diff\vphi}.  u}_{\idxj 1[\dotsm \widehat
        {\conj j}_\nu].q}
        =\sum_{\nu=1}^q (-1)^{\nu-1} \diff_{\conj j_\nu}
        \paren{\diff_{\ell}\vphi \: u^\ell_{\;\idxj 1[\dotsm
        \widehat{\conj j}_\nu].q}}
      \\
      &=\sum_{\nu=1}^q (-1)^{\nu-1} \paren{
        \diff_{\conj j_\nu}\diff_{\ell}\vphi \: u^{\ell}_{\;\idxj 1[\dotsm
        \widehat {\conj j}_\nu].q}
        +\diff_{\ell}\vphi \: \nabla_{\conj j_\nu} u^\ell_{\;\idxj 1[\dotsm
        \widehat {\conj j}_\nu].q}
        }
      \\
      &=\sum_{\nu=1}^q
        \diff^{\conj \ell}\diff_{\conj j_\nu}\vphi \: u_{\idxj 1[\dotsm
        (\conj\ell)_\nu].q}
        -\diff^{\conj\ell}\vphi \sum_{\nu=1}^q (-1)^{\nu} 
        \nabla_{\conj j_\nu} u_{\conj\ell \idxj 1[\dotsm
        \widehat {\conj j}_\nu].q}
        \begin{aligned}[t]
          &-\diff^{\conj\ell}\vphi \: \nabla_{\conj \ell} u_{\conj
            J_q} \\
          &+\diff^{\conj\ell}\vphi \: \nabla_{\conj \ell}
          u_{\conj J_q}
        \end{aligned}
      \\
      &=\sum_{\nu=1}^q
        \diff^{\conj \ell}\diff_{\conj j_\nu}\vphi \: u_{\idxj 1[\dotsm
        (\conj\ell)_\nu].q}
        -\diff^{\conj\ell}\vphi
        \:\paren{\dbar u}_{\conj\ell\conj J_q}
        +\diff^{\conj\ell}\vphi \: \nabla_{\conj \ell} u_{\conj
        J_q} \; . \qedhere
    \end{align*}
    % as desired.
  \end{proof}

  We see that $\HRes_p(u)$ is $\dbar$-closed by
  putting $z^1$ in place of $\vphi$ in Lemma \ref{lem:commutator-dbar-ctrt}.
  The following theorem is then immediate.
  \begin{thm} \label{thm:residue-harmonic}
    If $u$ is a harmonic $K_X\otimes L$-valued $(0,q)$-form on $X$ with
    respect to $\vphi_L$ and $\omega$ such that $\nabla^{(0,1)}u=0$,
    then $\HRes_p(u)$ is a harmonic $K_{D_p}\otimes
    L\vert_{D_p}$-valued $(0,q-1)$-form on $D_p$ with respect to
    $\varphi\vert_{D_p}$ and $\omega\vert_{D_p}$.
  \end{thm}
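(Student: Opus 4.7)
The plan is to verify the two conditions for harmonicity of $\HRes_p(u)$ on $D_p$ separately, namely $\dfadj\HRes_p(u)=0$ and $\dbar\HRes_p(u)=0$. The first of these is already furnished by Proposition~\ref{prop:harmonic-residue}: since $u$ is harmonic on $X$, $\dfadj u=0$, and that proposition then yields $\dfadj\HRes_p(u)=0$ on $D_p$ without further work. Thus the remaining task is to establish the $\dbar$-closedness of $\HRes_p(u)$.

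To this end, I would work in the local coordinate chart $(U;z^1,\ldots,z^n)$ adapted to $D_p\cap U=\{z^1=0\}$ used in the preceding discussion, where
\[
  \HRes_p(u) = \bigl(\idxup{dz^1}.\widetilde u_p\bigr)\big|_{D_p}, \qquad u=dz^1\wedge \widetilde u_p.
\]
The key step is to apply Lemma~\ref{lem:commutator-dbar-ctrt} with the holomorphic function $z^1$ in place of $\vphi$. Since $z^1$ is holomorphic, $\ibddbar z^1=0$, so the formula reduces to
\[
  \dbar\bigl(\idxup{dz^1}. u\bigr) = -\idxup{dz^1}. (\dbar u) + \idxup{dz^1}\cdot \nabla^{(0,1)}_\bullet u.
\]
By harmonicity $\dbar u=0$, and by hypothesis $\nabla^{(0,1)}u=0$; both terms on the right therefore vanish. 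Consequently $\idxup{dz^1}. u$ is $\dbar$-closed on $U$. Restricting to $D_p$ and identifying the restriction with $\HRes_p(u)$ via the Poincar\'e residue map $\PRes[D_p]$ gives $\dbar\HRes_p(u)=0$ on $D_p\cap U$. Since $x\in D_p$ was arbitrary, $\dbar\HRes_p(u)=0$ globally on $D_p$.

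The main technical point I foresee is the bookkeeping between the coordinate expression $\idxup{dz^1}. u$, which a priori is a $K_X\otimes L$-valued $(0,q-1)$-form on $U$, and the intrinsic $K_{D_p}\otimes L|_{D_p}$-valued form $\HRes_p(u)$ on $D_p$. One needs the compatibility $\dbar\circ\PRes[D_p]=\PRes[D_p]\circ\dbar$ (in a suitable sense) so that $\dbar$-closedness of $\idxup{dz^1}. u$ on $U$ genuinely descends to $\dbar$-closedness of $\HRes_p(u)$ on $D_p$; this is a routine but slightly delicate check using either the explicit description of $\PRes[D_p]$ from Section~\ref{subsec:residue} or a direct coordinate computation in the spirit of the proof of Proposition~\ref{prop:harmonic-residue}. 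Once this compatibility is in place, combining $\dbar\HRes_p(u)=0$ with $\dfadj\HRes_p(u)=0$ yields that $\HRes_p(u)$ is harmonic on $D_p$ with respect to $\vphi_L|_{D_p}$ and $\omega|_{D_p}$, completing the proof.
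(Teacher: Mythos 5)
Your proposal matches the paper's own proof: Proposition~\ref{prop:harmonic-residue} supplies $\dfadj$-closedness, Lemma~\ref{lem:commutator-dbar-ctrt} applied with $z^1$ in place of $\vphi$ (so that $\ibddbar z^1=0$) together with $\dbar u=0$ and $\nabla^{(0,1)}u=0$ supplies $\dbar$-closedness, and the bookkeeping between $\idxup{dz^1}.u=dz^1\wedge(\idxup{dz^1}.\widetilde u_p)$ and its restriction to $D_p$ is exactly what the paper does. The only observation the paper makes that you elide is that $\HRes_p(u)\in\Dom\dbadj$ (needed to pass from formal-adjoint-closedness to genuine $L^2$ harmonicity), which holds because $\vphi_L$ is smooth and $D_p$ is compact; this is routine and does not amount to a gap.
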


  \begin{proof}
    From the above discussion, $\HRes_p(u)$ is
    $\dbar$- and $\dfadj$-closed on $D_p$.
    Since $\varphi_L$ is smooth and $D_p$ is compact, it follows that
    $\HRes_p(u)\in\Dom\dbar^*$ with
    respect to $\varphi_L\vert_{D_p}$ and $\omega\vert_{D_p}$.
    This completes the proof.
  \end{proof}

  \begin{remark}
    When $\varphi_L$ has the singularity property described in
    \cite{Chan&Choi_injectivity-I}*{\S 2.2 item (2)} for $\varphi_F$,
    i.e.~$\varphi_L$ has only neat analytic singularities such that
    $P_L:=\varphi_L^{-1}(-\infty)$ is a divisor with $P_L+D$ having
    snc and that $P_L$ contains no components of $D$, the claim that
    $\HRes_p(u)\in\Dom\dbar^*$ with
    respect to $\norm\cdot_{D_p}:=\norm\cdot_{D_p,\varphi_L,\omega}$
    still holds true (under the assumption that $\omega\vert_{D_p}$ is
    a complete K\"ahler form on $D_p\setminus P_L$).
    Indeed, $\HRes_p(u)$ can be shown to be $L^2$
    with respect to $\norm\cdot_{D_p}$ by the arguments in
    \cite{Chan&Choi_injectivity-I}*{Prop.~3.2.3, Remark 3.2.4 and
      Prop.~3.3.2} (with $u$ here in place of $\frac{\rs u}{\sect_D}$
    there).
    With $\dfadj$ (with respect to $\varphi_L\vert_{D_p}$ and
    $\omega\vert_{D_p}$) being a smooth operator on $D_p\setminus P_L$
    % (different from the situation in Lemma \ref{lem:su-harmonicity})
    and $\omega\vert_{D_p}$ being complete,
    $\HRes_p(u)\in\Dom\dbadj$ follows
    from the classical arguments.
  \end{remark}

}

\section{Proofs of the main results}\label{sec:proof}

\subsection{Proof of Corollary \ref{cor:main}}\label{subsec:n2}

Corollary \ref{cor:main} can be proved by adapting the % same argument as in the
proof of Theorem \ref{thm:main} (or, more precisely, Theorem
\ref{thm:ker-nu=ker-tau}; see Remark \ref{rem:general-commut-diagram}
for details).
The proof involves an inductive reduction of the setup to
subvarieties on which the relevant injectivity result is known
or can be proved via Enoki's arguments (i.e.~harmonic theory for
cohomology is valid).
To get an essence of the argument, here 
% Nevertheless, in this subsection,
we deduce Corollary \ref{cor:main} from Theorem \ref{thm:main} 
using the previous work \cite{Matsumura_injectivity-lc}*{Thm.~1.6} (or
\cite{Chan&Choi_injectivity-I}*{Thm.~1.2.1}).

\begin{proof}[Proof of Corollary \ref{cor:main}]
%In the proof, we freely use the notation in Conjecture \ref{conj:fujino}. 
Consider the following commutative diagram  induced by 
the short exact sequence $0 \to K_{X} \to K_{X}\otimes D \to K_{D} \to
0 $ and the multiplication map $\otimes s$: 
\begin{equation*}
  % \vcenter{
  \xymatrix@R=3ex{
    \ar[d] & \ar[d]\\
    {H^q(X,  K_{X}\otimes F )} \ar[d]^-{}\ar[r] ^-{\otimes s}
    \ar[d]^-{\otimes \sect_D}
    &{H^q(X, K_{X}  \otimes F^{ \otimes{(m+1)}} )} \ar[d]\\ 
    {H^q(X,  K_{X}\otimes D \otimes F)}
    \ar[d]^-{}\ar[r] ^-{\otimes s}
    &{H^q(X,  K_{X} \otimes D \otimes F^{\otimes{(m+1)}})} \ar[d]\\ 
    {H^q(D, K_{D}\otimes F )}
    \ar[d]\ar[r]^-{\otimes s|_{D} } 
    & {H^q(D,  K_{D}\otimes F^{\otimes(m+1)} ) \; .}  \ar[d]\\ 
    & 
  }
  % }
\end{equation*}
The line bundle $M:=F^{\otimes m}$ with the metric
$h_{M}:=h_{F}^{\otimes m} = e^{-m\vphi_F}$ satisfies the curvature
assumption in Theorem \ref{thm:main} and the zero locus $s^{-1}(0)$
contains no lc centers of $(X, D)$ by assumption.
Hence, by Theorem \ref{thm:main}, the lowest multiplication map $\otimes s|_{D}$ in the diagram is injective for every $q$. 
This implies that a cohomology class $\alpha \in H^q(X,  K_{X}\otimes D \otimes F)$ with 
$s  \alpha =0 \in H^q(X,  K_{X}\otimes D \otimes F^{\otimes (m+1)})$ 
lies in the image of the vertical multiplication map $\otimes \sect_D$ on the left, 
where $\sect_D$ is the canonical section of the effective divisor $D$. 
Then, the conclusion $\alpha=0$ follows from \cite{Matsumura_injectivity-lc}*{Thm.~1.6} 
(or \cite{Chan&Choi_injectivity-I}*{Thm.~1.2.1}). 
\end{proof}

\subsection{Proof of Theorem \ref{thm:main} for a simple case} % \label{subsec:n2}
\label{sec:proof-of-simple-case}

In this subsection, we prove Theorem \ref{thm:main} in the simple case 
where \emph{$D$ has two components (i.e.~$D=D_{1}+D_{2}$) whose intersection
has only one irreducible component} and the degree of cohomology
groups is $q=1$.
% This simple case is completely contained in the general case discussed in Section \ref{subsec:general}, 
% but we illustrate a detailed proof, which is quite helpful in understanding the essence of the proof. 
% The proof of the general case is an extension of the argument in this section using lc strata. 
While this case is contained in the proof presented in Section
\ref{subsec:general}, a detailed proof of it is presented here in
order to illustrate the essence of the proof in the general case
without being obscured by the notation.
The proof in Section \ref{subsec:general} follows the same arguments
but on the lower-dimensional lc strata with more components.

\begin{proof}[Proof of Theorem \ref{thm:main} in the case of $D=D_{1}+D_{2}$ and $q=1$] 

% Suppose that $D=D_{1}+D_{2}$ and $q=1$ in Theorem \ref{thm:main}. 
Under the given assumptions and for a given cohomology class $\alpha
\in H^1(D,  K_{D} \otimes F)$, we prove here that $\alpha $ is actually $0$
when $s  \alpha =0 \in H^1(D,  K_{D} \otimes F \otimes M)$. 

\begin{step}[``Harmonic representative'' of $\alpha$] \label{step:harmonic-rep}
We intend to work with an \emph{``optimal''} representative of $\alpha$ via the Dolbeault isomorphism, 
% which means an appropriate harmonic form has the minimum $L^{2}$-norm in the forms representing $\alpha$.  
in the analogy of a harmonic form being the element with the minimal
$L^2$ norm in the corresponding cohomology class.
Nevertheless, at the time of writing, there is not yet a well
established theory of Dolbeault isomorphism and harmonic theory for
cohomology groups on the singular space $D$.
For this purpose, we consider the following diagram
\begin{equation}\label{h}
  \begin{aligned}
    \xymatrix@C=3.5em@R=3.5ex{
      \ar[d] & \ar[d]\\
      {\smash{\bigoplus_{p=1}^{2}} H^1(D_{p}, K_{D_{p}}\otimes F )}
      \ar[d]^-{}\ar[r] ^-{\otimes (s|_{D_{1}}, s|_{D_{2}})}
      \ar[d]^-{\tau}
      &{\smash{\bigoplus_{p=1}^{2}} H^1(D_{p},
        K_{D_{p}} \otimes F \otimes M )}
      \ar[d]\\
      {H^1(D, K_{D} \otimes F)} \ar[d]^-{}\ar[r] ^-{\otimes s}
      &{H^1(D,  K_{D} \otimes F\otimes M)} \ar[d]\\
      {H^1(D_{1}\cap D_{2}, K_{D_{1}\cap D_{2}}\otimes F )}
      \ar[d]\ar[r]^-{\otimes s|_{D_{1}\cap D_{2}} }
      &{H^1(D_{1}\cap D_{2},  K_{D_{1}\cap D_{2}}\otimes F \otimes M)} \ar[d]\\
      & }
  \end{aligned}
\end{equation}
induced from $0 \to K_{D_{1}} \oplus K_{D_{2}} \to K_{D} \to K_{D_{1} \cap D_{2}} \to 0$, 
% which corresponds to $\eqref{eq-ex2}$ in the case of $\rho=0, \sigma=1, \tau=2$ (cf.\,\eqref{eq-ex}). 
which in turn can be obtained by tensoring $K_X \otimes D$ to the
short exact sequence of adjoint ideal sheaves
\begin{equation*}
  \renewcommand{\objectstyle}{\displaystyle}
  \xymatrix@R=3.5ex{
    {0} \ar[r]
    &{\faidlof|1|/|0|*} \ar[r] \ar[d]_-{\Res^1}^-{\isom}
    &{\faidlof|2|/|0|*} \ar[r]
    &{\faidlof|2|/|1|*} \ar[r] \ar[d]^-{\Res^2}_-{\isom}
    &{0}
    \\
    &{\residlof|1|*} %\ar@{}[u]|-*[left]+{\isom}
    &&{\residlof|2|*} %\ar@{}[u]|-*[left]+{\isom}
  }
\end{equation*}
where $\aidlof* :=\aidlof$ and $\residlof* := \residlof$ and
the isomorphism $\faidlof/-1* \xrightarrow[\isom]{\Res^\sigma}
\residlof*$ is induced from the residue short exact sequence in
Section \ref{subsec:residue}.
Notice that the Dolbeault isomorphism and harmonic theory are
valid on $D_1$, $D_2$ and $D_1 \cap D_2$.
The multiplication map $\otimes s|_{D_{1}\cap D_{2}}$ on the bottom
row is non-zero by the assumption on $s^{-1}(0)$ and the curvature
assumption is still satisfied after restricting $F$ and $M$ to
$D_{1}\cap D_{2}$.
Hence, Enoki's injectivity theorem can be invoked to assert that
$\otimes s|_{D_{1}\cap D_{2}}$ is injective. 
Then, by an easy diagram chasing, we can find harmonic forms $u_{p}$ for $p=1,2$ such that 
\begin{equation*}
  u_{p} \in \mathcal{H}^{n-1,1}(D_{p}; F)_{\vphi_F} \cong H^1(D_{p},  K_{D_{p}}\otimes F ) 
  \text{ with } \alpha = \tau(\eqcls{u_{1}}, \eqcls{u_{2}}), 
\end{equation*}
where $\mathcal{H}^{n-1,1}(D_{p}; F)_{\vphi_F}$ denotes 
the space of $F|_{D_{p}}$-valued harmonic forms of $(n-1,1)$-type with respect to $\res{e^{-\vphi_F}}_{D_{p}}$. 
Note that there is freedom in the choice of $(u_{1}, u_{2})$  
since $\tau$ may not be injective. 
% $(u_{1}, u_{2})$ may not be the best representation of  $\alpha$.
% For this reason, by the orthogonal decomposition, we re-
To obtain the unique ``optimal'' representative of $\alpha$, we choose
the pair $(u_{1}, u_{2})$ 
with $\alpha = \tau(\eqcls{u_{1}}, \eqcls{u_{2}})$ that satisfies
\begin{equation}\label{eq-orth}
  (u_{1}, u_{2}) \in (\Ker \tau)^{\perp} \subset 
  \Ker \tau \oplus (\Ker \tau)^{\perp}= \bigoplus_{p=1}^{2}
  \mathcal{H}^{n-1,1}(D_{p}; F)_{\vphi_F} \; ,
\end{equation}
% This can be regarded as the {\textit{best}} representation of  $\alpha$. 
% Our purpose is to prove that the $L^{2}$-norm 
% $\norm{s u_{1}}_{\vphi_M, D_1}^2 +\norm{s u_{2}}_{\vphi_M, D_2}^2$ 
% is actually zero. 
in which $\paren{\ker\tau}^\perp$ is the orthogonal complement of
$\ker\tau$ with respect to the (squared) residue norm
$\norm\cdot_{\lcc|1|'}^2 =\norm\cdot_{D_1}^2 +\norm\cdot_{D_2}^2$
(defined as in \eqref{eq:residue-norm} with $\sigma :=1$, $\vphi_L
:=\vphi_F$ and $\lcS[V,p] :=D_p$).
With such choice of representative, our goal is to prove that the
$L^{2}$-norm $\norm{s u_{1}}_{D_1, \vphi_M}^2 +\norm{s u_{2}}_{D_2,
  \vphi_M}^2$ is actually zero (where $\norm\cdot_{D_p, \vphi_M}$'s
are defined as in \eqref{eq:residue-norm} with $\vphi_L:=\vphi_F
+\vphi_M$).

\end{step}

\begin{step}[Obstruction for $\norm{s u_{1}}_{D_1, \vphi_M}^2 +\norm{s
      u_{2}}_{D_2, \vphi_M}^2$ from being zero]
  \label{item:expression-of-su-simple}

  % In this step, we examine the relevant $L^{2}$-norm 
  % to obtain an obstruction for our purpose as a $F$-valued form on $D_{1} \cap D_{2}$. 
  % For this purpose, by using the Dolbeault isomorphism, the Poincar\'e residue map, and the assumption of $s \alpha=0$, 
  % we prepare the following data: 
  We make use of the assumption $s \alpha=0$ and the \v
  Cech--Dolbeault isomorphism to re-express $\norm{s u_{1}}_{D_1,
    \vphi_M}^2 +\norm{s u_{2}}_{D_2, \vphi_M}^2$ as follows.

  \begin{itemize}
  \item Take $\alpha_{p;\:ij}   \in H^{0}(V_{ij} \cap D_p ,
    K_{D_{p}}\otimes F)$  
    for every open set $V_{ij} :=V_i \cap V_j$ with $i,j \in I$ and
    $V_i \cap V_j \cap D_p\neq \emptyset$ such that the family
    $\{\alpha_{p;\:ij}\}_{i,j \in I}$ is a \v Cech cocycle
    representing $u_{p}$ via the \v Cech--Dolbeault isomorphism on
    $D_p$.
    It follows that there exists an $L^2$ section $v_{p,(2)}$ of
    $K_{D_p} \otimes \res F_{D_p}$ on $D_p$ with respect to
    $\norm\cdot_{D_p}$ such that (under Einstein summation
    convention) 
    \begin{equation*}
      u_p
      \overset{\text{\eqref{eq:Cech-Dolbeault-isom}}}= \:
      \dbar v_{p,(2)} -\dbar\rho^j \cdot \rho^i \:\alpha_{p;\:ij} \; .
    \end{equation*}

  \item Take $f_{ij} \in H^{0}(V_{ij} , K_X \otimes D
    \otimes F \otimes \defidlof{D_1 \cap D_2})$ for $i,j \in I$
    satisfying 
    \begin{equation*}
      \Res^1\paren{f_{ij}}
      :=\paren{\PRes[D_1](\frac{f_{ij}}{\sect_D}) \:,\:
        \PRes[D_2](\frac{f_{ij}}{\sect_D})} 
      = \paren{\alpha_{1;\:ij} , \alpha_{2;\:ij}} 
    \end{equation*}
    whose existence are guaranteed by the surjectivity of the
    residue isomorphism $\Res^1$ on Stein open sets such that
    \begin{equation*}
      \renewcommand{\objectstyle}{\displaystyle}
      \xymatrix@C=1em@R=0.8em{
        {K_X \otimes D \otimes \frac{\defidlof{D_1 \cap
              D_2}}{\defidlof{D}}} 
        \ar@{}[r]|-*+{=}
        \ar@{}[d]|*[left]{\in}
        &{K_X \otimes D \otimes \faidlof|1|/|0|*}
        \ar[rr]^-{\Res^1}_-{\isom}
        &&{K_X \otimes D \otimes \residlof|1|*}
        \ar@{}[r]|-*+{=}
        &{K_{D_1} \oplus K_{D_2}}
        \ar@{}[d]|(.57)*[left]{\in}
        \\
        *+/r 3em/{f_{ij} \bmod \defidlof{D}}
        \ar@{|->}[rrr]
        &&&*+/l 6em/{}
        &*-{\paren{\alpha_{1;\:ij} , \alpha_{2;\:ij}} \; .}
      }
    \end{equation*}
    It is easy to see that $\set{f_{ij} \bmod \defidlof{D}}_{i,j \in
      I}$ is a \v Cech cocycle whose cohomology class in $\cohgp
    1[D]{\logKX \otimes \frac{\defidlof{D_1 \cap
          D_2}}{\defidlof{D}}}$ is mapped to $\alpha$ via $\tau$. 

  \item The assumption $s \alpha=0$ in $\cohgp 1[D]{K_D
      \otimes F \otimes M}$ guarantees the
    existence of $\lambda_{i} \in H^{0}(V_{i}, K_X \otimes D\otimes
    F \otimes M)$ for $i \in I$ such that
    \begin{equation*}
      s f_{ij} \equiv \lambda_j -\lambda_i \mod \defidlof{D}
      \quad\text{ on } V_{ij} \; .
    \end{equation*}
    Note that the coefficients of $\lambda_i$ need not lie in
    $\defidlof{D_1 \cap D_2}$ even though so do those of $f_{ij}$.
    By setting
    \begin{equation*}
      \rs*\lambda_{p;\:i} := \PRes[D_p](\frac{\lambda_i}{\sect_D})
      \cdot \sect_{(p)}
      \quad\text{ on $V_i \cap D_p$ for } i\in I
      \text{ and } p = 1,2 \; ,
    \end{equation*}
    it then follows that
    \begin{equation*}
      s\alpha_{p;\:ij} \sect_{(p)} =\rs*\lambda_{p;\:j} -\rs*\lambda_{p;\:i}
      \quad\text{ on } V_{ij} \cap D_p \; .
    \end{equation*}
    Note that $\rs*\lambda_{p;\:i}$ is holomorphic on $V_i \cap D_p$
    (while $\PRes[D_p](\frac{\lambda_i}{\sect_D})$ may not be).
  \end{itemize}

  Since $u_{p}$ is harmonic with respect to $\vphi_F$ on $D_p$ and
  we have $\ibddbar\vphi_F \geq 0$ and $%-C\omega \leq
  \ibddbar\vphi_M \leq C\ibddbar\vphi_F$ on $D_p$ for some constant
  $C > 0$ by assumption,
  Proposition \ref{prop:consequence-of-positivity} guarantees that  
  % \begin{equation*} %\label{eq-harmonic}
  %   su_p \in \Harm'/n-1,1/<D_p>{F\otimes M},{\vphi_F+\vphi_M} \; ,
  % \end{equation*}%
  $su_p$ is harmonic with respect to $\vphi_F+\vphi_M$ on $D_p$,
  which is a consequence of Nakano's identity and Enoki's argument.
  It follows that $\iinner{s \dbar v_{p;(2)}}{su_p}_{D_p, \vphi_M}
  =\iinner{\dbar\paren{s v_{p;(2)}}}{su_p}_{D_p, \vphi_M} = 0$.
  Summarizing the above discussion, it follows that
  \begin{align*}
    \norm{s u_{p}}_{D_p, \vphi_M}^2 
    &= -\sum_{i,j\in I}\iinner{\dbar\rho^{j} \cdot \rho^i \:s
      \alpha_{p;\:ij} \:}{\:s u_p}_{D_p, \vphi_M}\\
    &= -\sum_{i,j\in I}\iinner{\dbar\rho^{j} \cdot \rho^i \:s
      \alpha_{p;\:ij} \sect_{(p)} \:}{\:s u_p \sect_{(p)}}_{D_p, \vphi_M+\phi_{(p)}}\\
    &= -\sum_{i,j\in I}\iinner{\dbar\rho^{j} \cdot \rho^i
      \paren{\rs*\lambda_{p;\:j}- \rs*\lambda_{p;\:i}} \:}
      {\:s u_p \sect_{(p)}}_{D_p, \vphi_M+\phi_{(p)}}\\
    &= -\sum_{j\in I}\iinner{\dbar\paren{\rho^j
      \rs*\lambda_{p;\:j}} \:}{\: s u_p \sect_{(p)}}_{D_p,
      \vphi_M+\phi_{(p)}}
      =: -\iinner{\dbar v_{p;(\infty)} }{ s u_p \sect_{(p)}}_{D_p,
      \vphi_M+\phi_{(p)}}
      \; .
  \end{align*}
  The notation $v_{p;(\infty)} :=\sum_{j\in I} \rho^j
  \rs*\lambda_{p;\:j}$ is used for the consistency with the notation in
  Proposition \ref{prop:res-formula-dbar-exact-dot-harmonic}.

  The residue computation in Proposition
  \ref{prop:res-formula-dbar-exact-dot-harmonic} further brings the
  expression of $\norm{s u_{p}}_{D_p, \vphi_M}^2$ for each $p=1,2$ to an inner
  product on $D_1 \cap D_2$.
  As $\lcS|2|[b] :=D_1 \cap D_2$ has only $1$ component, the index set $\Iset|2|
  =\set{b}$ is a singleton.
  Moreover, the general different $\Diff_{D_1 \cap D_2}(D) =\Diff_b(D)$ is
  trivial, so we choose its canonical section and the corresponding
  potential such that $\sect_{(b)} \equiv 1$ and $\phi_{(b)} \equiv 0$
  (and $\psi_{(b)} \equiv -1$) on $D_1 \cap D_2$.
  Let $\PRes[\lcS|2|[b] | D_p]$ be the Poincar\'e residue map from
  $D_p$ to $D_1 \cap D_2$.
  We fix the sign convention such that
  \begin{equation*}
    \rs*\lambda_{b;\:i}
    =\frac{\rs*\lambda_{b;\:i}}{\sect_{(b)}}
    :=\PRes[\lcS|2|[b]](\frac{\lambda_i}{\sect_D})
    \begin{aligned}[t]
      &= \PRes[\lcS|2|[b] | D_1] \circ
      \PRes[D_1](\frac{\lambda_i}{\sect_D})
      \\
      &=\PRes[\lcS|2|[b] | D_1](\frac{\rs*\lambda_{1;\:i}}{\sect_{(1)}})
    \end{aligned}
    \begin{aligned}[t]
      &=-\PRes[\lcS|2|[b] | D_2] \circ
      \PRes[D_2](\frac{\lambda_i}{\sect_D})
      \\
      &=-\PRes[\lcS|2|[b] | D_2](\frac{\rs*\lambda_{2;\:i}}{\sect_{(2)}})
    \end{aligned} \; .
  \end{equation*}
  Following the computation in Proposition
  \ref{prop:res-formula-dbar-exact-dot-harmonic}, we obtain
  \begin{align*}
    &~\norm{s u_{1}}_{D_1, \vphi_M}^2 +\norm{s u_{2}}_{D_2,
      \vphi_M}^2
    \\
    =&~-\iinner{\dbar v_{1;(\infty)} }{ s u_1 \sect_{(1)}}_{D_1,
       \vphi_M+\phi_{(1)}}
       -\iinner{\dbar v_{2;(\infty)} }{ s u_2 \sect_{(2)}}_{D_2,
       \vphi_M+\phi_{(2)}}
    \\
    =&~
       \begin{multlined}[t]
         \sum_{i\in I}\iinner{\rho^i \PRes[\lcS|2|[b] |
           D_1](\frac{\rs*\lambda_{1;\:i}}{\sect_{(1)}}) }{
           \: s\:\PRes[\lcS|2|[b] | D_1](\idxup{\diff\psi_{(1)}}. u_1)
         }_{D_1 \cap D_2, \vphi_M} \\
         +\sum_{i\in I}\iinner{\rho^i
           \PRes[\lcS|2|[b] |
           D_2](\frac{\rs*\lambda_{2;\:i}}{\sect_{(2)}}) }{
           \: s\:\PRes[\lcS|2|[b] | D_2](\idxup{\diff\psi_{(2)}}. u_2)
         }_{D_1 \cap D_2, \vphi_M}
       \end{multlined}
    \\
    =&~\iinner{\sum_{i\in
       I}\rho^i\rs*\lambda_{b;\:i} \:}{\: s\:\paren{
       \PRes[\lcS|2|[b] | D_1](\idxup{\diff\psi_{(1)}}. u_1)
       -\PRes[\lcS|2|[b] | D_2](\idxup{\diff\psi_{(2)}}. u_2)
       }}_{D_1 \cap D_2, \vphi_M}
    \\
    =:&~\iinner{v_{b;(\infty)}}{s w_b}_{D_1 \cap D_2, \vphi_M} \; ,
  \end{align*}
  which is the desired expression.

  It is shown below that
  \begin{equation} \label{eq:w-prelim-formula}
    w_b :=\PRes[\lcS|2|[b] | D_1](\idxup{\diff\psi_{(1)}}. u_1)
    -\PRes[\lcS|2|[b] | D_2](\idxup{\diff\psi_{(2)}}. u_2)
  \end{equation}
  is actually $0$ on $D_1 \cap D_2$, which will then conclude the proof.

\end{step}

\begin{step}[$w_b$ being holomorphic and thus {$w_b \in \cohgp 0[D_1
    \cap D_2]{K_{D_1 \cap D_2} \otimes F}$}]
  
  We prove that $\dbar w_b = 0$ on $\lcS|2|[b] :=D_1 \cap D_2$ by a
  direct computation given in Section \ref{subsec:harmonic}.
  Indeed, it suffices to show that each summand $\PRes[\lcS|2|[b] |
  D_p](\idxup{\diff\psi_{(p)}}. u_p)$ for $p=1,2$ in $w_b$ is
  $\dbar$-closed.
  The computations are identical, so it suffices to consider $p=1$.

  On an admissible open set $V$ such that $D_p  \cap V =\set{z_p =
    0}$ for $p=1,2$ and $\lcS|2|[b] \cap V = \set{z_1 = z_2 = 0} =
  D_1 \cap \set{z_2 = 0}$,
  we have
  \begin{equation*}
    \diff\psi_{(1)} =\frac{dz_2}{z_2} -\diff\sm\vphi_{(1)}
    \quad\text{ on } V \; .
  \end{equation*}
  By writing
  \begin{equation*}
    \idxup{dz_2}. u_1 =: dz_2 \wedge \paren{\idxup{dz_2}. \rs*u_{1,2}}
    \quad\text{ on } D_1 \cap V \; ,
  \end{equation*}
  where $\rs*u_{1,2}$ is a $(n-2,1)$-form on $D_1 \cap V$, we see
  that
  \begin{equation*}
    \PRes[\lcS|2|[b] | D_1](\idxup{\diff\psi_{(1)}}. u_1)
    =\PRes[\set{z_2 = 0}](\frac{\idxup{dz_2} .u_1}{z_2})
    =\parres{\idxup{dz_2}. \rs*u_{1,2}}_{\lcS|2|[b]}
    \quad\text{ on } D_1 \cap D_2 \cap V \; .
  \end{equation*}
  Therefore, it suffices to check that $\idxup{dz_2}. u_1$ is
  $\dbar$-closed on $D_1 \cap V$.
  As $u_1$ is harmonic and $\ibddbar\vphi_F \geq 0$, we have
  $\nabla^{(0,1)} u_1 = 0$ by Proposition
  \ref{prop:consequence-of-positivity} and Lemma
  \ref{lem:commutator-dbar-ctrt} yields the desired result (with $z_2$
  in place of $\vphi$ in the lemma).

  % In this step, we show that $w$ is a $F$-valued harmonic on $D_{1} \cap D_{2}$. 
  % Note that it is sufficient to show that $\dbar w =0$ in our case since the type of $w$ is $(n-2, q-1)=(n-2, 0)$ by $q=1$. 
  % By \cite[(1.9)]{Takegoshi_higher-direct-images} and \eqref{eq-harmonic}, we obtain that 
  % \begin{equation*}
  %   \dbar ( (d\sect_{(p)})^{*} u_{p}) 
  %   = \big( (i \partial  \dbar \sect_{(p)})^{*} - (\partial \sect_{(p)})^{*} \dbar + \partial \sect_{(p)}\nabla^{(0,1)} \big)u_{p}
  %   = 0 \quad\text{on   } D_1. 
  % \end{equation*}
  % By noting that $u_{p} = d \sect_{(1)}  \wedge \rs u_{p}^{12}$, 
  % we see that  $\dbar (d\sect_{(p)})^{*}  \rs u_{p}^{12} =0 $; hence $\dbar w =0$. 
  % In particular, $w$ determines the cohomology class $\{w \} \in H^{0}(D_{1}\cap D_{2}, K_{D_1 \cap D_2} \otimes F)$. 
\end{step}

\begin{step}[$w_b = 0$ and conclusion of the proof]
  \label{step:pf:use_u-ortho-w-simple}
We prove that $w_b =0$ using the assumption $(u_{1},u_{2}) \in
\paren{\ker \tau}^\perp$.
Consider the connecting morphism $\delta$ the long exact sequence 
\begin{equation*}
  \xymatrix@R=0.3cm@C=1.5em{
    {\to \cohgp 0[D_{1}\cap D_{2}]{K_{D_1 \cap D_2} \otimes F}} \ar[r]^-{\delta}
    &
    {\bigoplus_{p=1}^{2} \cohgp 1[D_{p}]{K_{D_p}\otimes F}} \ar[r]^-{\tau}  
    &
    {\cohgp 1[D]{K_D \otimes F}  \to} \; . 
  } 
\end{equation*}
Note that $\delta w_b \in \ker\tau$.

We compute $\delta w_b$ via the \v Cech--Dolbeault isomorphism.
% $\rho(w)$ in terms of \v Cech cohomology. 
Regard $w_b$ as a $0$-cocycle $\set{\rs \gamma_{b;\:i}}_{i \in I}$
given by $\rs \gamma_{b;\:i} :=\res{w_b}_{V_i}$.
Lift $\rs \gamma_{b;\:i}$ on $D_1 \cap D_2 \cap V_i$ to a section
$\gamma_i$ on $V_{i}$ via the isomorphism
$\frac{\holo_X}{\defidlof{D_1 \cap D_2}} = \faidlof|2|/|1|*
\xrightarrow[\isom]{\Res^2} \residlof|2|*$ such that
\begin{equation*}
  % \gamma_i = d\sect_{(2)}  \wedge d\sect_{(1)}  \wedge \rs \gamma_i \; .
  \Res^2\paren{\gamma_i}
  =\PRes[\lcS|2|[b]](\frac{\gamma_i}{\sect_D})
  =\frac{\rs*\gamma_{b;\:i}}{\sect_{(b)}} =\rs*\gamma_{b;\:i} \; .
\end{equation*}
Then $\delta w_b$ is represented by the $1$-cocycle
\begin{equation*}
  \delta\set{\gamma_i \bmod \defidlof{D_1 \cap D_2}}_{i \in I}
  =\set{(\delta  \gamma)_{ij} \bmod\defidlof{D}}_{i,j \in I}
  =\set{\gamma_{j} -\gamma_i  \bmod\defidlof{D}}_{i,j \in I} \; .
\end{equation*}
Note that $ \gamma_{j} -\gamma_i$ belongs to $\defidlof{D_1 \cap
  D_2}$, so $ \gamma_{j} -\gamma_i  \bmod\defidlof{D}$ can be realized
via the isomorphism $\frac{\defidlof{D_1 \cap D_2}}{\defidlof{D}}
=\faidlof|1|/|0|* \xrightarrow[\isom]{\Res^1} \residlof|1|*$ as
\begin{align*}
  \Res^1\paren{\gamma_{j} -\gamma_i}
  &=\paren{\PRes[D_1](\frac{\gamma_{j} -\gamma_i}{\sect_D})
    \: ,\:
    \PRes[D_2](\frac{\gamma_{j} -\gamma_i}{\sect_D})
    } \\
  &=\paren{
    \frac{(\delta \rs\gamma_1)_{ij}}{\sect_{(1)}}
    \: , \:
    \frac{(\delta \rs\gamma_2)_{ij}}{\sect_{(2)}}
    }
    \in K_{D_1} \otimes \res F_{D_1} \oplus K_{D_2} \otimes \res F_{D_2} \; ,
\end{align*}
in which $\rs*\gamma_{p;\:i} := \PRes[D_p](\frac{\gamma_i}{\sect_D})
\cdot \sect_{(p)}$ for $p = 1,2$.
Therefore, via the \v Cech--Dolbeault isomorphism on each $D_p$, 
the component of $\delta w_b$ on $D_p$ can be represented by (under
Einstein summation convention) 
\begin{equation*}
  -\dbar\rho^j \cdot \rho^i
  \frac{\paren{\delta\rs*\gamma_p}_{ij}}{\sect_{(p)}}
  =-\frac{\dbar\rho^j \cdot\rs*\gamma_{p;\:j}}{\sect_{(p)}}
  =: -\frac{\dbar v'_{p;(\infty)}}{\sect_{(p)}}
  % \paren{
  %   \res{\frac{\dbar\rho^i \:(\delta \gamma^1)_{ij}}{\sect_{(1)} }}_{D_1}
  %   \: , \:
  %   \res{\frac{\dbar\rho^i \: (\delta \gamma^2)_{ij}}{\sect_{(2)} }}_{D_2}
  % }
  % =\paren{
  %   -\res{\frac{\dbar\paren{\rho^i  \gamma^1_{i}}}{\sect_{(1)}}}_{D_1}
  %   \: , \:
  %   -\res{\frac{\dbar\paren{\rho^i  \gamma^2_{i}}}{\sect_{(2)}}}_{D_2}
  % } \; .
\end{equation*}
(the notation $v'_{p;(\infty)} :=\sum_{i\in I}\rho^i
\rs*\gamma_{p;\:i}$ is set for the consistency with the notation in
Proposition \ref{prop:res-formula-dbar-exact-dot-harmonic}).
% For the computation of the norm, 
% we take $\rs \gamma_i^p$ and $\rs\gamma_i^{12}$ such that 
% \begin{equation*}
% \gamma_{i} = d \sect_{(p)} \wedge \rs \gamma_i^p 
%   \quad\text{and}\quad 
%   \rs\gamma_i^{12} = \gamma_i. 
% \end{equation*}
Recall the sign convention chosen in Step
\ref{item:expression-of-su-simple} such that
\begin{equation*}
  \rs*\gamma_{b;\:i}
  = \PRes[\lcS|2|[b] | D_1](\frac{\rs*\gamma_{1;\:i}}{\sect_{(1)}})
  =- \PRes[\lcS|2|[b] |
  D_2](\frac{\rs*\gamma_{2;\:i}}{\sect_{(2)}}) \; .
\end{equation*}
Then, from $(u_{1},u_{2}) \in \paren{\ker\tau}^\perp$ and $\delta w_b
\in \ker\tau$, we obtain
\begin{align*}
  0
  &=
  \iinner{
    -\frac{\dbar v'_{1;(\infty)}}{\sect_{(1)}}
  }{u_1}_{D_1}
  +\iinner{
    -\frac{\dbar v'_{2;(\infty)}}{\sect_{(2)}}
  }{u_2}_{D_2}
  \\
  &=
    \iinner{
    -\dbar v'_{1;(\infty)}
    }{u_1 \sect_{(1)}}_{D_1, \phi_{(1)}}
    +\iinner{
    -\dbar v'_{2;(\infty)}
    }{u_2 \sect_{(2)}}_{D_2, \phi_{(2)}}
  \\
  &\overset{\mathclap{\text{Prop.~\ref{prop:res-formula-dbar-exact-dot-harmonic}}}}=
    \quad\;\;
    \iinner{\rho^i \rs*\gamma_{b;\:i} \:}{\:
    \PRes[\lcS|2|[b] | D_1](\idxup{\diff\psi_{(1)}}. u_1)
    -\PRes[\lcS|2|[b] | D_2](\idxup{\diff\psi_{(2)}}. u_2)
    }_{D_1\cap D_2}
  \\
  &=\iinner{w_b}{w_b}_{D_1 \cap D_2}
    =\norm{w_b}_{D_1 \cap D_2}^2
    \; .
\end{align*}
% By the same computation as in Step 2, 
% the right hand side can be described by the norm of $w$ as follows: 
% \begin{equation*}
%   0=\iinner{\rho^i  \gamma_i^{12} \:}{\:
%     (d\sect_{(1)})^{*}    u^{12} -(d\sect_{(2)})^{*}   v^{12}
%   }_{D_1 \cap D_2}
%   =\iinner{\rho^i \gamma_i}{w}_{D_1 \cap D_2}
%   = \norm w_{D_1 \cap D_2}^2. 
% \end{equation*}
This implies that $w_b=0$, finishing the proof for the case
$D=D_{1}+D_{2}$ and $q=1$. \qedhere
\end{step}
\end{proof}

\subsection{Remarks on the general case}
% \subsection{Strategy of the proof in the general case}
\label{subsec:n3}

There are two modifications to the proof in Section
\ref{sec:proof-of-simple-case} in order to handle the general case
worth mentioning here.
The first one is the replacement of the short exact sequence $0 \to
K_{D_1} \oplus K_{D_2} \to K_D \to K_{D_1 \cap D_2} \to 0$.
Take the case $D = D_1 + D_2 + D_3$, where $D_p = \set{z_p = 0}$ for
$p=1,2,3$ are the coordinate planes, for example.
Note that
\begin{equation*}
  \aidlof|3|* = \holo_X \;, \;\;
  \aidlof|2|* = \defidlof{D_1 \cap D_2 \cap D_3} \;, \;\;
  \aidlof|1|* = \smashoperator{\bigcap_{\substack{1 \leq p,q \leq 3 \\ p\neq q}}} \defidlof{D_p \cap D_q} \;
  \text{ and } \;
  \aidlof|0|* = \defidlof{D} 
\end{equation*}
in this case.
A natural choice of the short exact sequence to be considered is
\begin{equation*}
  \renewcommand{\objectstyle}{\displaystyle}
  \xymatrix@R=2.5em{
    0 \ar[r]
    &{K_X \otimes D \otimes \smash{\faidlof|1|/|0|*}} \ar[r]
    \ar[d]^(0.47){\Res^1}_(0.47){\isom}
    &{K_X \otimes D \otimes \smash{\faidlof|3|/|0|*}} \ar[r]
    \ar@{=}[d]
    &{K_X \otimes D \otimes \faidlof|3|/|1|*} \ar[r]
    &0 \; .
    \\
    &{\smash{\bigoplus_{p = 1}^3}\:K_{D_p}} \ar[r]
    &{K_D} 
    &
  }
\end{equation*}
In the previous case, we are taking advantage of the fact that the
$L^2$ Dolbeault isomorphism and the harmonic theory are valid on the
cohomology groups of the sheaves on both the left- and
right-hand-sides of the short exact sequence, so that the
corresponding injectivity statement can be proved on each side in
the spirit of Enoki, which in turn leads to the injectivity theorem
for the cohomology groups of the middle sheaf (twisted by $F$).
In the current case, they are valid only on the left-hand-side (on
each $D_p$).
We are thus led to determine whether the injectivity statement for
the sheaf on the right-hand-side holds true.
It is then apparent that we should consider
\begin{equation*}
  \renewcommand{\objectstyle}{\displaystyle}
  \xymatrix@R=2.5em{
    0 \ar[r]
    &{K_X \otimes D \otimes \smash{\faidlof|2|/|1|*}} \ar[r]
    \ar[d]_(0.47){\Res^2}^(0.47){\isom}
    &{K_X \otimes D \otimes \faidlof|3|/|1|*} \ar[r]
    &{K_X \otimes D \otimes \smash{\faidlof|3|/|2|*}} \ar[r]
    \ar[d]^-{\Res^3}_-{\isom}
    &0 \; ,
    \\
    &{\smash[t]{\bigoplus_{\substack{p,q = 1 \\ p\neq q}}^3} K_{D_p \cap D_q}} 
    &
    &{K_{D_1 \cap D_2 \cap D_3}}
  }
\end{equation*}
which, again, has the Dolbeault and harmonic theories valid on both
sides (on each lc center of $(X,D)$) of the short exact sequence.
The arguments in Section \ref{sec:proof-of-simple-case} can then be
employed to conclude the proof.
This illustrates the idea of the inductive arguments, which reduces
the question to the union of lower dimensional lc centers of $(X,D)$
in each step, to be employed in the general proof in Section
\ref{subsec:general}. 

Another modification to the proof in Section
\ref{sec:proof-of-simple-case} is that, when the claim in Theorem
\ref{thm:main} with $q > 1$ is considered, the section $w_b$
constructed as in \eqref{eq:w-prelim-formula} is then a $K_{\lcS+1[b]}
\otimes \res F_{\lcS+1[b]}$-valued $(0,q-1)$-form on some
$(\sigma+1)$-lc center $\lcS+1[b]$.
In order to prove that $w_b =0$ by following the arguments in the
previous case, we need not only to show that $w_b$ is
$\dbar$-closed, but also that it is harmonic.
This happens to be true and the computation for checking this claim
is given in Proposition \ref{prop:harmonic-residue} and Theorem
\ref{thm:residue-harmonic}.

\subsection{Proof of Theorem \ref{thm:main} in general}\label{subsec:general}

%\input{outline-of-proof}

%%%%%
%%%%% File name  : outline-of-proof.tex
%%%%% Author     : Mario Chan
%%%%% Date       : 6th March, 2023
%%%%% Description: This is the outline of the proof of the general
%%%%%              case of the project "Injectivity-Fujino".
%%%%%
%%
%%%

\renewcommand{\objectstyle}{\displaystyle}

Write
\begin{gather*}
  \aidlof* := \aidlof =\mtidlof{\vphi_F} \cdot \defidlof{\lcc+1'}
  =\defidlof{\lcc+1'} \; , \quad
  \residlof* := \residlof \isom \faidlof/-1*  \\
  \text{and } \quad \spH{\sheaf F}
  :=\cohgp q[X]{\logKX \otimes \sheaf F} 
\end{gather*}
for convenience.
Recall that
\begin{equation*}
  K_D = K_X \otimes D \otimes \faidlof|\sigma_{\mlc}|/|0|* \; ,
\end{equation*}
and the inclusions between adjoint ideal sheaves induce the short exact
sequences
\begin{equation*} % \label{eq-ex2}
  \xymatrix{
    0 \ar[r]
    & {\faidlof/|\rho|*} \ar[r]
    & {\faidlof|\tau|/|\rho|*} \ar[r]
    & {\faidlof|\tau|/*} \ar[r]
    & 0
  } \quad\text{ for } 0 \leq \rho \leq \sigma \leq \tau \; .
\end{equation*}
One is thus led to consider the commutative diagram

\begin{equation} \label{eq:commut-diagram_sing-Fujino-conj}
  \begin{gathered}
    \xymatrix@R=0.85cm@C+0.75cm{
      {\vdots} \ar[d]
      & {\vdots} \ar[d]
      & {\vdots} \ar[d]
      \\
      {\spH{\faidlof-1/|0|*}} \ar[d] \ar@{=}[r]
      & {\spH{\faidlof-1/|0|*}} \ar[r]^-{\otimes s}
      \ar[d]_-{\iota_{\sigma-1}} \ar[dr]|-*+{\mu_{\sigma-1}}
      & {\spH M{\faidlof-1/|0|*}} \ar[d]
      \\
      {\spH{\faidlof/|0|*}} \ar[d] \ar[r]^-{\iota_{\sigma}}
      \ar@/_1.9pc/[rr]|(.65)*+{\mu_{\sigma}}
      & {\spH{\faidlof|\sigma_{\mlc}|/|0|*}}
      \ar[d]|(.38)*+<3pt>{ }
      \ar[r]^-{\otimes s}
      & {\spH M{\faidlof|\sigma_{\mlc}|/|0|*}}
      \ar[d]
      \\
      {\spH{\residlof*}} \ar[d] \ar[r]^-{\tau_\sigma}
      \ar@/_1.65pc/[rr]+<-39pt,-15pt>|(.67)*+{\nu_\sigma}
      & {\spH{\faidlof|\sigma_{\mlc}|/-1*}} \ar[d]|(.52)*+<3pt>{}
      \ar[r]^-{\otimes s}
      & {\spH M{\faidlof|\sigma_{\mlc}|/-1*}} \ar[d] \\
      {\vdots} & {\vdots} & {\vdots} }
  \end{gathered}
\end{equation}
for $\sigma =2,\dots,\sigma_{\mlc}$, in which the columns are exact,
$\iota_\sigma$ and $\tau_\sigma$ are induced from the inclusions
between adjoint ideal sheaves, and $\mu_\sigma$ (resp.~$\nu_\sigma$)
is the composition of $\iota_\sigma$ (resp.~$\tau_\sigma$) with the
map induced from the multiplication map $\otimes s$.
The statement in Theorem \ref{thm:main} is proved if one shows that
$\ker\mu_{\sigma_{\mlc}} = \ker\iota_{\sigma_{\mlc}} = 0$
($\iota_{\sigma_{\mlc}}$ is the identity map).
Note that $\mu_1 =\nu_1$ and $\iota_1 =\tau_1$.
Following the argument in \cite{Chan&Choi_injectivity-I}*{Thm.~1.3.2}, since
$\ker\mu_{\sigma-1} =\ker\iota_{\sigma-1}$ and $\ker\nu_\sigma
=\ker\tau_\sigma$ together imply $\ker\mu_{\sigma}
=\ker\iota_{\sigma}$ via a diagram-chasing argument, to prove Theorem
\ref{thm:main}, it suffices to show the following theorem.

\begin{thm} \label{thm:ker-nu=ker-tau}
  $\ker\nu_\sigma =\ker\tau_\sigma$ for all $\sigma =1, \dots, \sigma_{\mlc}$.
\end{thm}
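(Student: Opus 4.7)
The plan is to prove $\ker \nu_\sigma \subseteq \ker \tau_\sigma$; the reverse inclusion is immediate from $\nu_\sigma = (\otimes s) \circ \tau_\sigma$. The argument extends the one in Section \ref{sec:proof-of-simple-case} essentially verbatim, now carried out simultaneously on the whole union $\lcc' = \bigcup_{p \in \Iset} \lcS$ of $\sigma$-lc centers, with the single obstruction form $w$ of Section \ref{sec:proof-of-simple-case} replaced by a collection $(w_b)_{b \in \Iset+1}$ of harmonic forms, one on each $(\sigma+1)$-lc center $\lcS+1[b]$.

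Given $\alpha \in \ker \nu_\sigma$, the residue isomorphism $\Res^\sigma$ combined with the $L^2$ Dolbeault isomorphism on each compact K\"ahler center $\lcS$ lets me represent $\alpha$ by a tuple $u = (u_p)_{p \in \Iset}$ of harmonic $K_{\lcS} \otimes \res F_{\lcS}$-valued $(0,q)$-forms with respect to $\res{\vphi_F}_{\lcS}$. Since $\tau_\sigma$ need not be injective, I would pick $u$ orthogonal to $\ker \tau_\sigma$ with respect to the residue norm $\|\cdot\|^2_{\lcc'} := \sum_{p \in \Iset} \|\cdot\|^2_{\lcS}$ on $\bigoplus_{p \in \Iset} \mathcal{H}^{n-\sigma,q}(\lcS; F)_{\vphi_F}$. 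Proposition \ref{prop:consequence-of-positivity} then ensures each $s u_p$ is harmonic with respect to $\vphi_F + \vphi_M$, so that the goal is reduced to proving $\sum_{p \in \Iset} \|s u_p\|^2_{\lcS, \vphi_M} = 0$. I would then lift, via $\Res^\sigma$ on a Stein refinement of $\cvr V$, a \v Cech cocycle representative of each $u_p$ to a cocycle $\set{f_{i_0 \dots i_q}}$ for $\tau_\sigma(\alpha)$, and use $\nu_\sigma(\alpha) = 0$ to extract a $(q-1)$-cochain $\set{\lambda_{i_1 \dots i_q}}$ with $s f \equiv \delta \lambda$ modulo $\faidlof|\sigma_{\mlc}|/|\sigma-1|*$. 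Combining the \v Cech--Dolbeault identity \eqref{eq:Cech-Dolbeault-isom} with Lemma \ref{lem:su-harmonicity} and a single application of Proposition \ref{prop:res-formula-dbar-exact-dot-harmonic} (with $L = F \otimes M$ and $s u_p$ in place of $u_p$) converts this sum, up to a nonzero scalar, into a pairing
\begin{equation*}
  \sum_{b \in \Iset+1} \iinner{v_{b;(\infty)}}{s w_b \cdot \sect_{(b)}}_{\lcS+1[b], \vphi_M + \phi_{(b)}}
\end{equation*}
on the $(\sigma+1)$-lc centers, where the crucial form is
\begin{equation*}
  w_b := \sum_{\substack{p \in \Iset \\ \lcS+1[b] \subset \lcS}} \sgn{b:p} \, \PRes[\lcS+1[b] | \lcS](\idxup{\diff\psi_{(p)}}. u_p).
\end{equation*}

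It remains to show $w_b = 0$ on each $\lcS+1[b]$. Since $\nabla^{(0,1)} u_p = 0$ by Proposition \ref{prop:consequence-of-positivity}, Lemma \ref{lem:commutator-dbar-ctrt} together with Proposition \ref{prop:harmonic-residue} and Theorem \ref{thm:residue-harmonic} imply that $w_b$ is in fact a harmonic $K_{\lcS+1[b]} \otimes \res F_{\lcS+1[b]}$-valued $(0,q-1)$-form. Considering the connecting morphism $\delta$ attached to $0 \to \faidlof/|\sigma-1|* \to \faidlof|\sigma+1|/|\sigma-1|* \to \faidlof|\sigma+1|/* \to 0$, a \v Cech lift of $(w_b)_b$ through $\Res^{\sigma+1}$ produces a representative of $\delta[(w_b)]$ which by exactness lies in $\ker \tau_\sigma$. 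A second application of Proposition \ref{prop:res-formula-dbar-exact-dot-harmonic} (this time with $M$ dropped) to the pairing $\iinner{u}{\delta[(w_b)]}_{\lcc'}$, combined with $u \perp \ker \tau_\sigma$, gives $\sum_{b \in \Iset+1} \|w_b\|^2_{\lcS+1[b]} = 0$, which forces $w_b = 0$ and finishes the proof. The analytic content is already entirely packaged in Proposition \ref{prop:res-formula-dbar-exact-dot-harmonic} and Theorem \ref{thm:residue-harmonic}, so the main obstacle I anticipate is the combinatorial bookkeeping: verifying that $w_b$ is globally well-defined on $\lcS+1[b]$ with the correct sign $\sgn{b:p}$, that the two applications of the residue formula combine to produce exactly $\|w_b\|^2$ (and not a twisted variant), and that the \v Cech coboundary of the lift of $(w_b)$ realizes $\delta[(w_b)]$ inside $\ker \tau_\sigma$ at the cochain level, precisely the manipulations already tracked in Steps \ref{item:expression-of-su-simple}--\ref{step:pf:use_u-ortho-w-simple} of Section \ref{sec:proof-of-simple-case}.
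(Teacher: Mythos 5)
Your proposal reproduces the paper's own proof in Section \ref{subsec:general} essentially step for step: choose harmonic representatives $(u_p)_{p\in\Iset}$ in $\ker\nu_\sigma\cap(\ker\tau_\sigma)^\perp$, reduce $\norm{su}^2_{\lcc'}$ via Lemma \ref{lem:su-harmonicity}, Proposition \ref{prop:consequence-of-positivity} and the residue formulas (Propositions \ref{prop:residue-product-X-to-lcS} and \ref{prop:res-formula-dbar-exact-dot-harmonic}) to a pairing $\sum_b\iinner{v_{b;(\infty)}}{s w_b \sect_{(b)}}$ against the obstruction forms $w_b$, establish that each $w_b$ is harmonic via Proposition \ref{prop:harmonic-residue} and Theorem \ref{thm:residue-harmonic}, and then kill $w_b$ by pairing $\delta w\in\ker\tau_\sigma$ against $u\perp\ker\tau_\sigma$ through a second residue computation. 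The paper carries out precisely the sign and \v Cech bookkeeping you flag at the end (including the identity $\binom{\sigma_V}{\sigma}(\sigma_V-\sigma)=\binom{\sigma_V}{\sigma+1}(\sigma+1)$ needed in Proposition \ref{prop:res-formula-dbar-exact-dot-harmonic} to reindex from $(p,k)$ to $b$), so your outline is the intended argument.
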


\begin{remark} \label{rem:general-commut-diagram}
  When $\aidlof|0|*$ in the commutative diagram
  \eqref{eq:commut-diagram_sing-Fujino-conj} is replaced by $0$ (which
  can be considered as $\aidlof|-1|*$), the setup is reduced to the one in
  \cite{Chan&Choi_injectivity-I}*{Thm.~1.3.2}, which states that
  Theorem \ref{thm:ker-nu=ker-tau} together with the result in
  \cite{Matsumura_injectivity-lc}*{Thm.~1.6} or
  \cite{Chan&Choi_injectivity-I}*{Thm.~1.2.1} implies that Fujino's
  conjecture is true.
  As a matter of fact, the proof of Theorem \ref{thm:ker-nu=ker-tau}
  can also be adapted to the case $\sigma = 0$ (with $\aidlof|-1|* =
  0$ and $\residlof|0|* =D^{-1} \isom \defidlof{D} =\aidlof|0|*$) which recovers the result in
  \cite{Matsumura_injectivity-lc}*{Thm.~1.6} as well as
  \cite{Chan&Choi_injectivity-I}*{Thm.~1.2.1}.
  Furthermore, by replacing $\aidlof|0|*$ by $\aidlof|\sigma_0-1|*$
  and $\aidlof|\sigma_{\mlc}|*$ by $\aidlof|\sigma'|*$ for any $0 <
  \sigma_0 \leq \sigma'$ and letting $\sigma$ vary within the range
  $\sigma_0 < \sigma \leq \sigma'$ in the diagram
  \eqref{eq:commut-diagram_sing-Fujino-conj}, one sees that the proof
  of Theorem \ref{thm:ker-nu=ker-tau} guarantees the statement of
  Theorem \ref{thm:main} but with $K_D$ replaced by $K_X \otimes D
  \otimes \faidlof|\sigma'|/|\sigma_0 -1|*$.
\end{remark}

\begin{proof}
  The proof consists of the following steps.
  \begin{enumerate}[label=\textbf{Step \Roman*:}, ref=\Roman*,
    leftmargin=0pt, labelsep=*, widest=VI, itemindent=*, align=left,
    itemsep=1.5ex]
  \item Make use of the $L^2$ Dolbeault and harmonic theory available on
    $\spH{\residlof*}$.

    Write $\lcc' =\bigcup_{p \in \Iset} \lcS$ as the union of
    $\sigma$-lc centers $\lcS$ of $(X,D)$.  Notice that $\residlof*$,
    hence $\spH{\residlof*}$, has a decomposition as a direct sum
    which yields
    \begin{equation*}
      \spH{\residlof*}
      =\bigoplus_{p \in \Iset} \cohgp q[\lcS]{K_{\lcS}
        \otimes F \otimes \mtidlof<\lcS>{\vphi_F}}
      =\bigoplus_{p \in \Iset} \cohgp q[\lcS]{K_{\lcS} \otimes F}
    \end{equation*}
    such that the $L^2$ Dolbeault isomorphism and harmonic theory are
    valid for the cohomology group in each summand.
    Take the (squared) residue norm
    $\norm\cdot_{\lcc'}^2 = \sum_{p \in \Iset} \norm\cdot_{\lcS}^2$ as
    the $L^2$ norm on $\spH{\residlof*}$.  Pick any element
    $u := (u_p)_{p \in \Iset} \in \spH{\residlof*}$ such that
    \begin{itemize}
    \item each $u_p$ is a harmonic form on $\lcS$ with respect to the
      given norm $\norm\cdot_{\lcS}$ and
    
    \item $u \in \ker\nu_\sigma \cap \paren{\ker\tau_\sigma}^\perp$,
      where the orthogonal complement $\paren{\ker\tau_\sigma}^\perp$
      of $\ker\tau_\sigma$ is taken with respect to the residue norm
      $\norm\cdot_{\lcc'}$.
    \end{itemize}
    The theorem is proved if it is shown that $u_p = 0$ for all
    $p \in \Iset$.

  \item \label{item:express-su-in-residue-norm}
    Obtain an expression of $\norm{su}_{\lcc'}^2$ using the
    assumption $u \in \ker\nu_\sigma$ and the \v Cech--Dolbeault
    isomorphism.

    Let $\cvr V :=\set{V_i}_{i \in I}$ be a locally finite cover of
    $X$ by admissible open sets with respect to
    $(\vphi_F,\vphi_M,\psi_D)$ and let $\set{\rho^i}_{i \in I}$ be a
    partition of unity subordinate to $\cvr V$.
    Their notations are abused to mean also their induced cover and
    partition of unity on $\lcc'$ for any $\sigma \geq 0$.
    For any choice of indices $\idx 0,q \in I$, write $V_{\idx 0.q}
    :=V_{i_0} \cap V_{i_1} \dotsm \cap V_{i_q}$ as usual.
  
    Through the \v Cech--Dolbeault isomorphism, every (cohomology
    class of) $u_p$ is represented by a \v Cech $q$-cocycle
    $\set{\alpha_{p; \:\idx 0.q}}_{\idx 0,q \in I}$ such that (under
    the Einstein summation convention on the indices $\idx 0,q$)
    \begin{equation*}
      u_p
      % &= \dbar v_{p;(2)} +\dbar \rho^{i_{q-1}} \wedge \dotsm \wedge
      %   \dbar\rho^{i_0} \alpha_{p; \:\idx 0.q} \qquad\paren{\forall~ i_q \in I} \\
      \overset{\text{\eqref{eq:Cech-Dolbeault-isom}}}=
      \:\dbar v_{p;(2)}
      +(-1)^q \:\underbrace{\dbar \rho^{i_{q}} \wedge \dotsm \wedge
        \dbar\rho^{i_1} \cdot \rho^{i_0} }_{=: \:
        \paren{\dbar\rho}^{\idx q.0}} \alpha_{p; \:\idx 0.q} \; ,
    \end{equation*}
    where $v_{p; (2)}$ is a $K_{\lcS} \otimes \res{F}_{\lcS}$-valued $(0,q-1)$-form
    on $\lcS$ with $L^2$ coefficients with respect to
    $\norm\cdot_{\lcS}$ and
    $\alpha_{p; \:\idx 0.q} \in K_{\lcS} \otimes \res F_{\lcS} \otimes
    \mtidlof<\lcS>{\vphi_F} =K_{\lcS} \otimes \res F_{\lcS}$ on
    $V_{\idx 0.q}$.
    % (see \cite{Matsumura_injectivity}*{Prop.~5.5} or
    % \cite{Chan&Choi_injectivity-I}*{Lemma 3.2.1}). 
    In view of the
    residue short exact sequence, choose, for each choice of
    the multi-indices $(\idx 0,q)$, a section
    $f_{\idx 0,q} \in \logKX M \otimes \aidlof*$ on $V_{\idx 0.q}$
    such that
    \begin{equation*}
      \Res^\sigma(f_{\idx 0.q})
      =\paren{\alert{s} \alpha_{p; \:\idx 0.q}}_{p \in \Iset} 
    \end{equation*}
    (note that $V_{\idx 0.q}$ is Stein).  Considering the inclusion
    $\aidlof* \subset \aidlof|\sigma_{\mlc}|*$, write
    \begin{equation*}
      \eqcls{f_{\idx 0.q}} := \paren{f_{\idx 0.q} \bmod \aidlof-1*}
      \;\in \logKX M \otimes \faidlof|\sigma_{\mlc}|/-1*
      \quad\text{ on } V_{\idx 0.q} \; .
    \end{equation*}
    The collection $\set{\eqcls{f_{\idx 0.q}}}_{\idx 0,q \in I}$ is
    then a \v Cech $q$-cocycle representing $\nu_\sigma(u)$ in $\spH
    M{\faidlof|\sigma_{\mlc}|/-1*}$.
    The assumption $u \in \ker\nu_\sigma$ implies that this cocycle is
    a coboundary, that is,
    \begin{equation*}
      \set{\eqcls{f_{\idx 0.q}}}_{\idx 0,q \in I}
      =\delta\set{\eqcls{\lambda_{\idx 1.q}}}_{\idx 1,q \in I}
      =\set{\eqcls{\paren{\delta\lambda}_{\idx 0.q} }}_{\idx 0,q \in I}
    \end{equation*}
    for some $\lambda_{\idx 1.q} \in \logKX M \otimes
    \aidlof|\sigma_{\mlc}|*$ on $V_{\idx 1.q}$ (note that
    $\lambda_{\idx 1.q}$ need \emph{not} take values in $\aidlof*$
    even though the sections $f_{\idx 0.q}$ do), where
    $\paren{\delta\lambda}_{\idx 0.q}$ is given by the usual formula
    of \v Cech coboundary operator $\paren{\delta\lambda}_{\idx 0.q}
    :=\sum_{k =0}^q (-1)^k \lambda_{\idx 0[\dotsm \widehat{i_k}].q}$.
    Notice that $f_{\idx 0.q}$ and $\paren{\delta\lambda}_{\idx 0.q}$
    differ by an element in $\logKX M \otimes \aidlof-1*$ on $V_{\idx
      0.q}$.

    Thanks to the positivity $\ibddbar\vphi_F \geq 0$ and the bound
    $\ibddbar\vphi_M \leq C \ibddbar\vphi_F$ for some
    constant $C > 0$ on each $\lcS$, the product $s u_p$ is harmonic
    with respect to $\norm\cdot_{\lcS}$ (Proposition
    \ref{prop:consequence-of-positivity}), so $\iinner{s u_p}{s
      \:\dbar  v_{p;(2)}}_{\lcS} = \iinner{s u_p}{\dbar \paren{s
        v_{p;(2)}}}_{\lcS} = 0$ for every $p \in\Iset$.
    It follows that
    \begin{align*}
      \norm{su}_{\lcc'}^2
      =\sum_{p\in \Iset} \norm{su_p}_{\lcS}^2
      =&~(-1)^q \sum_{p\in \Iset} \sum_{\idx 0,q \in I} \iinner{\paren{\dbar\rho}^{\idx q.0}
        \:s \alpha_{p; \:\idx 0.q}}{\: s u_p}_{\lcS}
      \\
      =&~(-1)^q \sum_{p\in \Iset} \sum_{\idx 0,q \in I} \iinner{
         s \alpha_{p; \:\idx 0.q}
         }{\:\idxup{\diff\rho},[\idx 0.q].  s u_p}_{\lcS}
         \; ,
    \end{align*}
    where $\idxup{\diff\rho},[\idx 0.q].  \cdot $ is the adjoint
    of $\paren{\dbar\rho}^{\idx q.0} \cdot$.
    As in Step \ref{item:expression-of-su-simple} in Section
    \ref{sec:proof-of-simple-case}, the desired expression can be
    obtained by substituting
    \begin{equation*}
      s\alpha_{p; \:\idx 0.q} =\PRes[\lcS](\frac{f_{\idx
          0.q}}{\sect_D})
      =\PRes[\lcS](\frac{\paren{\delta\lambda}_{\idx
          0.q}}{\sect_D})
      =\frac{\paren{\delta \rs*\lambda_p}_{\idx 0.q}}{\sect_{(p)}}
      \; ,
    \end{equation*}
    where $\rs*\lambda_{p; \:\idx 1.q}
    :=\PRes[\lcS](\frac{\lambda_{\idx 1.q}}{\sect_D}) \cdot
    \sect_{(p)}$.
    For the sake of illustration, an alternative approach via a
    direct residue computation is presented here.
    Note that $\paren{\idxup{\diff\rho},[\idx 0.q].  s u_p}_{p \in
      \Iset} \in \logKX M \otimes \smooth_{X\:c\,*} \cdot\residlof*$ on $V_{\idx
      0.q}$, so it has a preimage $h^{\idx 0.q} \in \logKX M \otimes
    \smooth_{X\:c\,*} \cdot \aidlof*$ of $\Res^\sigma$ (considered as a
    $\smooth_{X\:c\,*}$-homomorphism).
    Fix such preimage on each open set $V_{\idx 0.q}$.
    From the direct computation of the residue function, it follows
    that
    \begin{align*}
      (-1)^q \:\norm{su}_{\lcc'}^2
      =&~\sum_{p\in \Iset} \sum_{\idx 0,q \in I} \iinner{
         s \alpha_{p; \:\idx 0.q}
         }{\:\idxup{\diff\rho},[\idx 0.q].  s u_p}_{\lcS}
      \\
      \xleftarrow{\eps \tendsto 0^+}
       &~\smashoperator[l]{\sum_{\idx 0,q \in I}} \eps
         \int_{\mathrlap{V_{\idx 0.q}}} \quad \frac{
         \inner{f_{\idx 0.q}}{h^{\idx 0.q}}
         \:e^{-\phi_D -\vphi_F -\vphi_M}
         }{\abs{\psi_D}^{\sigma +\eps}}
      \\
      \overset{\mathclap{\text{Prop.~\ref{prop:residue-product-X-to-lcS}}}}= \quad\;
       &~\smashoperator[l]{\sum_{\idx 0,q \in I}} \eps
         \int_{\mathrlap{V_{\idx 0.q}}} \quad \frac{
         \inner{\paren{\delta\lambda}_{\idx 0.q}}{h^{\idx 0.q}}
         \:e^{-\phi_D -\vphi_F -\vphi_M}
         }{\abs{\psi_D}^{\sigma +\eps}} +\BigO(\eps)
      \\
      \xrightarrow[\text{Prop.~\ref{prop:residue-product-X-to-lcS}}]{\eps \tendsto 0^+}
      %  &~\sum_{p\in \Iset} \sum_{\idx 0,q \in I}
      %    \int_{\lcS} \inner{
      %    \frac{\paren{\delta\rs*\lambda_p}_{\idx 0.q}}{ \sect_{(p)}}
      %    }{\:
      %    \idxup{\diff\rho},[\idx 0.q].  s u_p
      %    } \:e^{-\vphi_F-\vphi_M}
      % \\
      % =&~\sum_{p\in \Iset} \sum_{\idx 0,q \in I}
      %    \int_{\lcS} \inner{
      %    \paren{\dbar\rho}^{\idx q.0}
      %    \paren{\delta\rs*\lambda_p}_{\idx 0.q}
      %    }{\:
      %     s u_p \sect_{(p)}
      %    }_\omega \:e^{-\phi_{(p)}-\vphi_F-\vphi_M}
      % \\
      % =
       &~\sum_{p\in \Iset} \sum_{\idx 0,q \in I}
         \iinner{\paren{\dbar\rho}^{\idx q.0}
         \paren{\delta\rs*\lambda_p}_{\idx 0.q}}{\: s u_p
         \sect_{(p)}}_{\lcS, \phi_{(p)}}
      \\
      =&~\sum_{p\in \Iset} \sum_{\idx 1,q \in I}
         \iinner{\dbar\rho^{i_q} \wedge \dotsm \wedge \dbar\rho^{i_1}
         \cdot\rs*\lambda_{p;\:\idx 1.q}}{\: s u_p \sect_{(p)}}_{\lcS,
         \phi_{(p)}}
      \\
      =&~(-1)^{q-1} \sum_{p\in \Iset} \iinner{\dbar v_{p;(\infty)}}{\: s u_p
         \sect_{(p)}}_{\lcS, \phi_{(p)}}
         \; ,\footnotemark
    \end{align*}%
    \footnotetext{
      If the $L^2$ Dolbeault isomorphism is valid for $\spH
      M{\faidlof|\sigma_{\mlc}|/-1*}$, such conclusion can be
      obtained simply from the fact that $\nu_\sigma(su)$ is
      represented by a smooth $\dbar$-exact form on $\lcc'$.
    }%
    where
    % $\rs*\lambda_{p; \:\idx 1.q}
    % :=\PRes[\lcS](\frac{\lambda_{\idx 1.q}}{\sect_D}) \cdot \sect_{(p)}$ and 
    $v_{p; (\infty)} :=\sum_{\idx 1,q\in I} \dbar\rho^{i_q} \wedge \dotsm
    \wedge \dbar\rho^{i_2} \cdot \rho^{i_1} \rs*\lambda_{p;\:\idx 1.q}
    =\sum_{\idx 1,q\in I} \paren{\dbar\rho}^{\idx q.1}\rs*\lambda_{p; \:\idx 1.q}$. 

    % Note that $s u_p$ is harmonic with respect to the potential
    % $\vphi_F+\vphi_M$ by the positivity assumption.
    The expression of $\norm{su}_{\lcc'}^2$ can be further transformed
    by an integration by parts using Proposition
    \ref{prop:res-formula-dbar-exact-dot-harmonic}, which becomes
    \begin{equation*}
      \norm{su}_{\lcc'}^2
      % &=\smashoperator[l]{\sum_{\idx 1,q \in I}} \sum_{b \in \Iset+1}
      % \sum_{j=1}^{\sigma +1} (-1)^q \:\sigma
      % \iinner{ \sgn{b:p_{b,j}}\:
      % \frac{\rs*\lambda_{b;\:\idx 1.q}}{\sect_{(b)}}
      % }{\: s\:
      %   \idxup{\diff\rho},[\idx 1.q] .
      %   \PRes[b(j)](\idxup{\diff\psi_{(p_{b,j})}}.  u_{p_{b,j}})
      % }_{\lcS+1[b]}
      %   \\
      = \sigma\sum_{b \in \Iset+1}
      \iinner{v_{b;(\infty)} \:
      }{ \quad s \:
        \smashoperator{\sum_{p \in \Iset \colon \lcS+1[b] \subset
            \lcS}} \;\; \sgn{b:p}\:
        \PRes[\lcS+1[b] | \lcS](\idxup{\diff\psi_{(p)}}.  u_{p})
        \cdot \sect_{(b)}
      }_{\lcS+1[b], \phi_{(b)}} \; ,
    \end{equation*}
    where $v_{b;(\infty)} := \sum_{\idx 1,q \in I}
    \paren{\dbar\rho}^{\idx q.1} \rs*\lambda_{b; \:\idx 1.q}$ and
    $\rs*\lambda_{b; \:\idx 1.q}
    :=\PRes[\lcS+1[b]](\frac{\lambda_{\idx 1.q}}{\sect_D}) \cdot \sect_{(b)}$.

    Set
    \begin{equation}\label{eq-def-w}
      w_b := \smashoperator[r]{\sum_{p \in \Iset \colon \lcS+1[b] \subset
          \lcS}} \;\; \sgn{b:p}\:
      \PRes[\lcS+1[b] | \lcS](\idxup{\diff\psi_{(p)}} . u_{p})
      \; .
    \end{equation}
    It suffices to show that $w_b = 0$ on $\lcS+1[b]$ for each $b
    \in\Iset+1$ to conclude the proof.

  \item Show that $w_b$ is harmonic with respect to
    $\res{\vphi_F}_{\lcS+1[b]}$ (and $\res{\omega}_{\lcS+1[b]}$) on
    $\lcS+1[b]$ for all $b \in \Iset+1$ and thus $\paren{w_b}_{b
      \in\Iset+1}$ represents a class in $\spH/q-1/{\residlof+1*}$.

    {
      \newcommand{\lcSb}{\lcS+1[b]}

      To see that $w_b$ is $\dbar$-closed on $\lcSb$, it suffices to
      show that $\PRes[\lcS+1[b] | \lcS](\idxup{\diff\psi_{(p)}}. 
      u_{p})$ is $\dbar$-closed for all $p\in\Iset$ such that $\lcSb
      \subset \lcS$.
      Take any admissible open set $V$ such that $V \cap \lcSb
      \neq\emptyset$ and a holomorphic coordinate system such that
      $\sect_{(p)} = z_{p(\sigma+1)} \dotsm z_{p(\sigma_V)}$ on $V$.
      Suppose $\lcSb \cap V = \lcS \cap \set{z_{p(k)} = 0}$ for some $k
      =\sigma +1, \dots, \sigma_V$.
      Recall that
      \begin{equation*}
        \diff\psi_{(p)} = \sum_{k'=\sigma+1}^{\sigma_V}
        \frac{dz_{p(k')}}{z_{p(k')}} - \diff\sm\vphi_{(p)} \quad\text{
          on } V \; .
      \end{equation*}
      By writing
      \begin{equation*}
        \idxup{dz_{p(k)}}.  u_p =: dz_{p(k)} \wedge
        \paren{\idxup{dz_{p(k)}}.  \rs u_{p,k}} \quad\text{ on }
        \lcS \cap V \; ,
      \end{equation*}
      in which $\rs u_{p,k}$ is a $(n-\sigma-1,q)$-form on $\lcS \cap
      V$, it follows that
      \begin{equation*}
        \PRes[\lcS+1[b] | \lcS](\idxup{\diff\psi_{(p)}}.  u_{p})
        =\PRes[\set{z_{p(k)}=0}](\frac{\idxup{dz_{p(k)}}. 
          u_p}{z_{p(k)}})
        =\parres{\idxup{dz_{p(k)}} . \rs u_{p,k}}_{\lcSb}
        \quad\text{ on } \lcSb \cap V \; .
      \end{equation*}
      It thus suffices to show that $\idxup{dz_{p(k)}}.  u_p$ is
      $\dbar$-closed on $\lcS \cap V$.
      % , which is guaranteed by Lemma
      % \ref{lem:commutator-dbar-ctrt}.
      Since $u_p$ is harmonic and $\ibddbar\vphi_F \geq 0$, it follows
      that $\dbar u_p = 0$ and $\nabla^{(0,1)}u_p = 0$ (Proposition
      \ref{prop:consequence-of-positivity}).
      Putting $u_p$ into $u$ and $z_{p(k)}$ into $\vphi$ in Lemma
      \ref{lem:commutator-dbar-ctrt}, one has
      $\dbar\paren{\idxup{dz_{p(k)}}.  u_p} = 0$ on $\lcS \cap V$.
      As a result, $w_b$ is $\dbar$-closed on $\lcSb$ and
      $\paren{w_b}_{b\in\Iset+1}$ therefore represents a class in
      $\spH/q-1/{\residlof+1*}$.

    }

    Furthermore, by Proposition \ref{prop:harmonic-residue} and
    Theorem \ref{thm:residue-harmonic} (with
    $\lcS$ in place of $X$, $\lcS+1[b]$ in place of $D_p$ and
    $\psi_{(p)}$ in place of $\psi_{D_p}$), $w_b$ is a
    $K_{\lcS+1[b]} \otimes \res{F}_{\lcS+1[b]}$-valued $(0,q-1)$-form on
    $\lcS+1[b]$ (not only a $\res{\conj\holoform_X^{q-1}}_{\lcS+1[b]}$-valued
    section) which is harmonic with respect to
    $\res{\vphi_F}_{\lcS+1[b]}$.
    % Therefore, $\paren{w_b}_{b\in\Iset+1}$ represents a class in
    % $\spH/q-1/{\residlof+1*}$. 

  \item \label{item:pf:use_u-ortho-w}
    Apply the assumption $u =(u_p)_{p\in\Iset} \in
    \paren{\ker\tau_\sigma}^{\perp}$ via the use of $w
    :=\paren{w_b}_{b \in \Iset+1} \in \spH/q-1/{\residlof+1*}
    =\bigoplus_{b \in\Iset+1} \cohgp{q-1}[\lcS+1[b]]{K_{\lcS+1[b]}
      \otimes F}$ in view of the commutative diagram
    \begin{equation*}
      \xymatrix@R-0.3cm{
        {\dotsm} \ar[r]
        & {\spH/q-1/{\residlof+1*}} \ar[r]^-{\delta}
        \ar[d]^-{\tau_{\sigma+1}}
        & {\spH{\residlof*}} \ar[r]
        \ar@{=}[d]
        & {\spH{\faidlof+1/-1*}} \ar[r]
        \ar[d]
        & {\dotsm}
        \\
        {\dotsm} \ar[r]
        & {\spH/q-1/{\faidlof|\sigma_{\mlc}|*}} \ar[r]
        & {\spH{\residlof*}} \ar[r]^-{\tau_\sigma}
        & {\spH{\faidlof|\sigma_{\mlc}|/-1*}} \ar[r]
        & {\dotsm}
      }
    \end{equation*}
    and conclude that $u_p = 0$ on $\lcS$ for each $p\in\Iset$.

    From the commutative diagram, one sees that $\delta w \in
    \ker\tau_\sigma$.
    In view of the isomorphism $\residlof+1* \isom \faidlof+1*$ and by
    following the procedures in Step
    \ref{item:express-su-in-residue-norm}, one obtains
    a $\logKX \otimes \aidlof+1*$-valued \v Cech $(q-1)$-cochain
    $\set{\gamma_{\idx 1.q}}_{\idx 1,q \in I}$ with respect to $\cvr
    V$ such that, when
    \begin{equation*}
      \paren{\alpha'_{b; \:\idx 1.q}}_{b\in\Iset+1}
      :=\paren{\frac{\rs*\gamma_{b; \:\idx
            1.q}}{\sect_{(b)}}}_{\mathrlap{b\in\Iset+1}} \quad\;
      :=\Res^{\sigma+1}(\gamma_{\idx 1.q})
      \in \smashoperator[r]{\prod_{b\in\Iset+1}} K_{\lcS+1[b]} \otimes
      \res{F}_{\lcS+1[b]} \paren{\lcS+1[b] \cap V_{\idx 1.q}}
    \end{equation*}
    (notation chosen for the consistency with those in Proposition
    \ref{prop:res-formula-dbar-exact-dot-harmonic}) and
    \begin{equation*}
      \eqcls{\gamma_{\idx 1.q}}
      := \paren{\gamma_{\idx 1.q} \bmod \aidlof*} \in \logKX \otimes
      \faidlof+1* \quad\text{ on } V_{\idx 1.q} \; ,
    \end{equation*}
    the collection $\set{\alpha'_{b;\:\idx 1.q}}_{\idx 1,q\in I}$
    is a \v Cech \emph{$(q-1)$-cocycle} representing (the class of)
    $w_b$ in $\cohgp{q-1}[\lcS+1[b]]{K_{\lcS+1[b]} \otimes F}$ for
    each $b \in\Iset+1$ such that
    \begin{equation*}
      w_b = \dbar v'_{b;(2)} +(-1)^{q-1} \:\underbrace{
        \dbar \rho^{i_{q}} \wedge \dotsm \wedge
        \dbar\rho^{i_2} \cdot \rho^{i_1} }_{=: \:
        \paren{\dbar\rho}^{\idx q.1}} \alpha'_{b;\:\idx 1.q}
      =: \dbar v'_{b;(2)} +(-1)^{q-1} \frac{v_{b;(\infty)}'}{\sect_{(b)}}
    \end{equation*}
    (again, notation chosen for the consistency with those in Proposition
    \ref{prop:res-formula-dbar-exact-dot-harmonic})
    for some $K_{\lcS+1[b]} \otimes \res{F}_{\lcS+1[b]}$-valued
    $(0,q-2)$-form $v'_{b;(2)}$ on $\lcS+1[b]$ with $L^2$ coefficients
    with respect to $\norm\cdot_{\lcS+1[b]}$, and the collection
    $\set{\eqcls{\gamma_{\idx 1.q}}}_{\idx 1,q \in I}$ is a \v Cech
    \emph{$(q-1)$-cocycle} representing (the class of) $w$ in
    $\spH/q-1/{\faidlof+1*} \xrightarrow[\isom]{\Res^{\sigma+1}}
    \spH/q-1/{\residlof+1*}$.
    The image $\delta w$ in $\spH{\residlof*}$ is then represented by
    \begin{equation*}
      \set{\Res^\sigma\paren{\paren{\delta\gamma}_{\idx 0.q}}}_{\idx 0,q \in
      I} \; ,
    \end{equation*}
    in which $\delta$ is the \v Cech boundary operator.
    Note that applying $\Res^\sigma$ to $\paren{\delta\gamma}_{\idx
      0.q}$ is valid as $\set{\eqcls{\gamma_{\idx 1.q}}}_{\idx 1,q \in
      I}$ is a cocycle and thus coefficients of
    $\paren{\delta\gamma}_{\idx 0.q}$ lie in $\aidlof*$.
    Set
    \begin{equation*}
      \rs*\gamma_{p;\:\idx 1.q} := \PRes[\lcS](\frac{\gamma_{\idx
            1.q}}{\sect_D}) \cdot \sect_{(p)} 
    \end{equation*}
    such that
    \begin{equation*}
      \Res^\sigma\paren{\paren{\delta\gamma}_{\idx 0.q}}
      =\paren{\frac{\paren{\delta\rs*\gamma_p}_{\idx
            0.q}}{\sect_{(p)}}}_{p \in \Iset}
      \in \prod_{p \in \Iset} K_{\lcS} \otimes \res F_{\lcS}
      \paren{\lcS \cap V_{\idx 0.q}} \; .
    \end{equation*}
    Note that 
    \begin{equation*}
      (-1)^q \paren{\dbar\rho}^{\idx q.0}
      \frac{\paren{\delta \rs*\gamma_p}_{\idx 0.q}}{\sect_{(p)}}
      % =(-1)^q \paren{\dbar\rho}^{\idx q.1}
      % \frac{\rs*\gamma_{p;\:\idx 1.q}}{\sect_{(p)}}
      =-
      \frac{\dbar\paren{\paren{\dbar\rho}^{\idx q.1} \rs*\gamma_{p;\:\idx 1.q}}}{\sect_{(p)}}
      =: - \:\frac{\dbar v_{p; (\infty)}'}{\sect_{(p)}}
      \quad\text{ on } \lcS
    \end{equation*}
    is a $\dbar$-closed form representing the class of $\res{\delta
      w}_{\lcS}$ (the component of $\delta w$ on $\lcS$) in $\cohgp
    q[\lcS]{K_{\lcS} \otimes F}$ via Dolbeault isomorphism.
    
    Therefore, from the assumption $u \in
    \paren{\ker\tau_\sigma}^\perp$ and taking into account the \v
    Cech--Dolbeault isomorphism \eqref{eq:Cech-Dolbeault-isom} and the
    fact that each $u_p$ is harmonic, one has
    \begin{align*}
      0 =\iinner{(-1)^{q-1} \delta w}{u}_{\lcc'}
      &=(-1)^q \sum_{p\in \Iset} \iinner{\frac{\dbar
          v_{p;(\infty)}'}{\sect_{(p)}}}{u_p}_{\lcS}
      =(-1)^q \sum_{p\in \Iset} \iinner{\dbar v_{p;(\infty)}'}{u_p
        \sect_{(p)}}_{\lcS, \phi_{(p)}}
      \\
      &\overset{\mathclap{\text{Prop.~\ref{prop:res-formula-dbar-exact-dot-harmonic}}}}=
        \quad \;\; (-1)^{q-1} \sigma
        \smashoperator{\sum_{b \in \Iset+1}} \iinner{v_{b;(\infty)}'}{w_b
        \sect_{(b)}}_{\lcS+1[b], \phi_{(b)}}
      \\
      &=\sigma \smashoperator{\sum_{b \in \Iset+1}}
        \iinner{
          \paren{w_b -\dbar v_{b;(2)}'} \sect_{(b)}
        }{
          w_b \sect_{(b)}
        }_{\lcS+1[b], \phi_{(b)}}
      \\
      &\overset{\mathclap{w_b \text{ harmonic}}}= \quad\;\;\;
        \sigma
        \smashoperator{\sum_{b \in \Iset+1}}
        \iinner{w_b}{w_b}_{\lcS+1[b]}
        =\sigma \norm{w}_{\lcc+1'}^2 \; .
    \end{align*}
    As a result, $w_b = 0$ for each $b\in\Iset+1$, thus $su_p = 0$
    (hence $u_p = 0$) for each $p\in\Iset$ by Step
    \ref{item:express-su-in-residue-norm}.
    This completes the proof. \qedhere
  \end{enumerate}
\end{proof}

\begin{remark} \label{rem:singular-vphi_F}
  When $\vphi_F$ and $\vphi_M$ have only neat analytic singularities
  such that $\vphi_F^{-1}(-\infty) \cup \vphi_M^{-1}(-\infty) \cup D$
  has only snc and $\vphi_F^{-1}(-\infty) \cup \vphi_M^{-1}(-\infty)$
  contains no irreducible components of $D$ (hence no lc centers of
  $(X,D)$), the proof is still valid when the K\"ahler metric $\omega$
  on $X$ is replaced by a complete metric on $X \setminus
  \paren{\vphi_F^{-1}(-\infty) \cup \vphi_M^{-1}(-\infty)}$ as
  described in \cite{Chan&Choi_injectivity-I}*{\S 2.2 item (4)}.
  See \cite{Chan&Choi_injectivity-I}*{\S 3.3} for the technical
  modifications required.
\end{remark}

\begin{remark} \label{rem:no-hard-Lefschetz}
  Notice that the refinement of hard Lefschetz theorem (see
  \cite{Matsumura_injectivity-lc}*{Thm.~1.7} or
  \cite{Chan&Choi_injectivity-I}*{Thm.~2.5.1}) is not used in this
  proof.
  It is used in previous works to show that $\frac{u}{\sect_D}$ is
  smooth for every harmonic $u$ representing a class in $\cohgp
  q[X]{\logKX\otimes \mtidlof<X>{\phi_D}}$.
  This argument can be replaced by using directly the isomorphism
  induced by $\holo_X \xrightarrow[\isom]{\otimes \sect_D} D \otimes
  \mtidlof<X>{\phi_D}$, or $\holo_{\lcS} \xrightarrow[\isom]{\otimes
    \sect_{(p)}} \Diff_p(D) \otimes \mtidlof<\lcS>{\phi_{(p)}}$, which
  is more relevant to this article (see also Lemma
  \ref{lem:su-harmonicity}).
  However, when $\vphi_F$ and $\vphi_M$ have neat analytic singularities
  as described in \cite{Chan&Choi_injectivity-I}*{\S 2.2}, the theorem
  is still needed to get certain control of the regularity of $u$ on the
  polar sets of $\vphi_F$ and $\vphi_M$ (see
  \cite{Chan&Choi_injectivity-I}*{Prop.~3.3.1}).
\end{remark}

%%% Local Variables:
%%% mode: latex
%%% TeX-master: "Injectivity-Fujino"
%%% coding: utf-8
%%% End:

%%%%% Reference list %%%%%
\begin{bibdiv}
  \begin{biblist}
    \IfFileExists{references.ltb}{
      \bibselect{references}
    }{
      \bib{Ambro_quasi-log-var}{article}{
  author={Ambro, F.},
  title={Quasi-log varieties},
  journal={Tr. Mat. Inst. Steklova},
  volume={240},
  date={2003},
  number={Biratsion. Geom. Line\u {\i }n. Sist. Konechno Porozhdennye Algebry},
  pages={220--239},
  issn={0371-9685},
  translation={ journal={Proc. Steklov Inst. Math.}, date={2003}, number={1(240)}, pages={214--233}, issn={0081-5438}, },
  review={\MR {1993751}},
}

\bib{Ambro_injectivity}{article}{
  author={Ambro, Florin},
  title={An injectivity theorem},
  journal={Compos. Math.},
  volume={150},
  date={2014},
  number={6},
  pages={999--1023},
  issn={0010-437X},
  review={\MR {3223880}},
  doi={10.1112/S0010437X13007768},
}

\bib{Cao&Demailly&Matsumura}{article}{
  author={Cao, JunYan},
  author={Demailly, Jean-Pierre},
  author={Matsumura, Shinichi},
  title={A general extension theorem for cohomology classes on non reduced analytic subspaces},
  journal={Sci. China Math.},
  volume={60},
  date={2017},
  number={6},
  pages={949--962},
  issn={1674-7283},
  review={\MR {3647124}},
  doi={10.1007/s11425-017-9066-0},
}

\bib{Cao&Paun_LC-inj}{article}{
  author={Cao, Junyan},
  author={P\u aun, Mihai},
  title={$\partial \bar \partial $-lemmas and a conjecture of O. Fujino},
  eprint={arXiv:2303.16239 [math.AG]},
  date={2023},
}

\bib{Chan_on-L2-ext-with-lc-measures}{article}{
  author={Chan, Tsz On Mario},
  title={On an $L^2$ extension theorem from log-canonical centres with log-canonical measures},
  journal={Math. Z.},
  volume={301},
  date={2022},
  number={2},
  pages={1695--1717},
  issn={0025-5874},
  review={\MR {4418335}},
  doi={10.1007/s00209-021-02890-9},
  eprint={https://rdcu.be/cFDPA},
  arxiv={2008.03019 [math.CV]},
  note={Numbering of cited sections and theorems follows the arXiv version},
}

\bib{Chan_adjoint-ideal-nas}{article}{
  author={Chan, Tsz On Mario},
  title={A new definition of analytic adjoint ideal sheaves via the residue functions of log-canonical measures I},
  journal={J. Geom. Anal.},
  volume={33},
  date={2023},
  pages={Paper No. 279, 68 pp.},
  doi={10.1007/s12220-023-01314-w},
  eprint={https://rdcu.be/deUDt},
  arxiv={2111.05006 [math.CV]},
}

\bib{Chan&Choi_ext-with-lcv-codim-1}{article}{
  author={Chan, Tsz On Mario},
  author={Choi, Young-Jun},
  title={Extension with log-canonical measures and an improvement to the plt extension of Demailly-Hacon-P\u {a}un},
  journal={Math. Ann.},
  volume={383},
  date={2022},
  number={3-4},
  pages={943--997},
  issn={0025-5831},
  review={\MR {4458394}},
  doi={10.1007/s00208-021-02152-3},
  eprint={https://rdcu.be/cn5N6},
  arxiv={1912.08076 [math.CV]},
}

\bib{Chan&Choi_injectivity-I}{article}{
  author={Chan, Tsz On Mario},
  author={Choi, Young-Jun},
  title={On an injectivity theorem for log-canonical pairs with analytic adjoint ideal sheaves},
  arxiv={2205.06954 [math.CV]},
  date={2023},
  note={to appear in Trans. Amer. Math. Soc.},
}

\bib{Demailly}{webpage}{
  author={Demailly, Jean-Pierre},
  title={Complex analytic and differential geometry},
  note={OpenContent Book},
  url={https://www-fourier.ujf-grenoble.fr/~demailly/manuscripts/agbook.pdf},
  date={2012},
}

\bib{Donnelly&Xavier}{article}{
  author={Donnelly, Harold},
  author={Xavier, Frederico},
  title={On the differential form spectrum of negatively curved Riemannian manifolds},
  journal={Amer. J. Math.},
  volume={106},
  date={1984},
  number={1},
  pages={169--185},
  issn={0002-9327},
  review={\MR {729759}},
  doi={10.2307/2374434},
}

\bib{Enoki}{article}{
  author={Enoki, Ichiro},
  title={Kawamata-Viehweg vanishing theorem for compact K\"{a}hler manifolds},
  conference={ title={Einstein metrics and Yang-Mills connections}, address={Sanda}, date={1990}, },
  book={ series={Lecture Notes in Pure and Appl. Math.}, volume={145}, publisher={Dekker, New York}, },
  date={1993},
  pages={59--68},
  review={\MR {1215279}},
}

\bib{Esnault&Viehweg_book}{book}{
  author={Esnault, H\'{e}l\`ene},
  author={Viehweg, Eckart},
  title={Lectures on vanishing theorems},
  series={DMV Seminar},
  volume={20},
  publisher={Birkh\"{a}user Verlag, Basel},
  date={1992},
  pages={vi+164},
  isbn={3-7643-2822-3},
  review={\MR {1193913}},
  doi={10.1007/978-3-0348-8600-0},
}

\bib{Fujino_log-MMP}{article}{
  author={Fujino, Osamu},
  title={Fundamental theorems for the log minimal model program},
  journal={Publ. Res. Inst. Math. Sci.},
  volume={47},
  date={2011},
  number={3},
  pages={727--789},
  issn={0034-5318},
  review={\MR {2832805}},
  doi={10.2977/PRIMS/50},
}

\bib{Fujino_injectivity-II}{article}{
  author={Fujino, Osamu},
  title={A transcendental approach to Koll\'{a}r's injectivity theorem II},
  journal={J. Reine Angew. Math.},
  volume={681},
  date={2013},
  pages={149--174},
  issn={0075-4102},
  review={\MR {3181493}},
  doi={10.1515/crelle-2012-0036},
}

\bib{Fujino_vanishing-thms}{article}{
  author={Fujino, Osamu},
  title={Vanishing theorems},
  conference={ title={Minimal models and extremal rays}, address={Kyoto}, date={2011}, },
  book={ series={Adv. Stud. Pure Math.}, volume={70}, publisher={Math. Soc. Japan, [Tokyo]}, },
  date={2016},
  pages={299--321},
  review={\MR {3618264}},
  doi={10.2969/aspm/07010299},
  arxiv={1202.4200v2 [math.AG]},
}

\bib{Fujino_injectivity-hodge-theoretic}{article}{
  author={Fujino, Osamu},
  title={Injectivity theorems},
  conference={ title={Higher dimensional algebraic geometry---in honour of Professor Yujiro Kawamata's sixtieth birthday}, },
  book={ series={Adv. Stud. Pure Math.}, volume={74}, publisher={Math. Soc. Japan, Tokyo}, },
  date={2017},
  pages={131--157},
  review={\MR {3791211}},
  doi={10.2969/aspm/07410131},
  arxiv={1303.2404v3 [math.AG]},
}

\bib{Fujino_survey}{article}{
  author={Fujino, Osamu},
  title={On semipositivity, injectivity and vanishing theorems},
  conference={ title={Hodge theory and $L^2$-analysis}, },
  book={ series={Adv. Lect. Math. (ALM)}, volume={39}, publisher={Int. Press, Somerville, MA}, },
  date={2017},
  pages={245--282},
  review={\MR {3751293}},
}

\bib{Fujino&Matsumura}{article}{
  author={Fujino, Osamu},
  author={Matsumura, Shin-ichi},
  title={Injectivity theorem for pseudo-effective line bundles and its applications},
  journal={Trans. Amer. Math. Soc. Ser. B},
  volume={8},
  date={2021},
  pages={849--884},
  review={\MR {4324359}},
  doi={10.1090/btran/86},
  arxiv={1605.02284 [math.CV]},
}

\bib{Gongyo&Matsumura}{article}{
  author={Gongyo, Yoshinori},
  author={Matsumura, Shinichi},
  title={Versions of injectivity and extension theorems},
  language={English, with English and French summaries},
  journal={Ann. Sci. \'{E}c. Norm. Sup\'{e}r. (4)},
  volume={50},
  date={2017},
  number={2},
  pages={479--502},
  issn={0012-9593},
  review={\MR {3621435}},
  doi={10.24033/asens.2325},
  arxiv={1406.6132 [math.AG]},
}

\bib{Griffiths&Harris}{book}{
  author={Griffiths, Phillip},
  author={Harris, Joseph},
  title={Principles of algebraic geometry},
  series={Wiley Classics Library},
  note={Reprint of the 1978 original},
  publisher={John Wiley \& Sons, Inc., New York},
  date={1994},
  pages={xiv+813},
  isbn={0-471-05059-8},
  review={\MR {1288523}},
  doi={10.1002/9781118032527},
}

\bib{Kollar_injectivity}{article}{
  author={Koll\'{a}r, J\'{a}nos},
  title={Higher direct images of dualizing sheaves. I},
  journal={Ann. of Math. (2)},
  volume={123},
  date={1986},
  number={1},
  pages={11--42},
  issn={0003-486X},
  review={\MR {825838}},
  doi={10.2307/1971351},
}

\bib{Kollar_Sing-of-MMP}{book}{
  author={Koll\'{a}r, J\'{a}nos},
  title={Singularities of the minimal model program},
  series={Cambridge Tracts in Mathematics},
  volume={200},
  note={With a collaboration of S\'{a}ndor Kov\'{a}cs},
  publisher={Cambridge University Press, Cambridge},
  date={2013},
  pages={x+370},
  isbn={978-1-107-03534-8},
  review={\MR {3057950}},
  doi={10.1017/CBO9781139547895},
}

\bib{Matsumura_injectivity-survey}{article}{
  author={Matsumura, Shinichi},
  title={Injectivity theorems with multiplier ideal sheaves and their applications},
  conference={ title={Complex analysis and geometry}, },
  book={ series={Springer Proc. Math. Stat.}, volume={144}, publisher={Springer, Tokyo}, },
  date={2015},
  pages={241--255},
  review={\MR {3446761}},
  doi={10.1007/978-4-431-55744-9\_18},
}

\bib{Matsumura_injectivity}{article}{
  author={Matsumura, Shinichi},
  title={An injectivity theorem with multiplier ideal sheaves of singular metrics with transcendental singularities},
  journal={J. Algebraic Geom.},
  volume={27},
  date={2018},
  number={2},
  pages={305--337},
  issn={1056-3911},
  review={\MR {3764278}},
  doi={10.1090/jag/687},
  arxiv={1308.2033 [math.CV]},
}

\bib{Matsumura_rel-vanishing-w-nd}{article}{
  author={Matsumura, Shinichi},
  title={Variation of numerical dimension of singular hermitian line bundles},
  conference={ title={Geometric complex analysis}, },
  book={ series={Springer Proc. Math. Stat.}, volume={246}, publisher={Springer, Singapore}, },
  date={2018},
  pages={247--255},
  review={\MR {3923231}},
  doi={10.1007/978-981-13-1672-2\_19},
}

\bib{Matsumura_injectivity-lc}{article}{
  author={Matsumura, Shinichi},
  title={A transcendental approach to injectivity theorem for log canonical pairs},
  journal={Ann. Sc. Norm. Super. Pisa Cl. Sci. (5)},
  volume={19},
  date={2019},
  number={1},
  pages={311--334},
  issn={0391-173X},
  review={\MR {3923849}},
}

\bib{Matsumura_injectivity-Kaehler}{article}{
  author={Matsumura, Shinichi},
  title={Injectivity theorems with multiplier ideal sheaves for higher direct images under K\"{a}hler morphisms},
  journal={Algebr. Geom.},
  volume={9},
  date={2022},
  number={2},
  pages={122--158},
  issn={2313-1691},
  review={\MR {4429015}},
  doi={10.14231/ag-2022-005},
  arxiv={1607.05554v2 [math.CV]},
}

\bib{Ohsawa&Takegoshi-spectral_seq}{article}{
  author={Ohsawa, Takeo},
  author={Takegoshi, Kensh\={o}},
  title={Hodge spectral sequence on pseudoconvex domains},
  journal={Math. Z.},
  volume={197},
  date={1988},
  number={1},
  pages={1--12},
  issn={0025-5874},
  review={\MR {917846}},
  doi={10.1007/BF01161626},
}

\bib{Siu}{article}{
  author={Siu, Yum Tong},
  title={Complex-analyticity of harmonic maps, vanishing and Lefschetz theorems},
  journal={J. Differential Geom.},
  volume={17},
  date={1982},
  number={1},
  pages={55--138},
  issn={0022-040X},
  review={\MR {658472}},
}

\bib{Takegoshi_higher-direct-images}{article}{
  author={Takegoshi, Kensh\={o}},
  title={Higher direct images of canonical sheaves tensorized with semi-positive vector bundles by proper K\"{a}hler morphisms},
  journal={Math. Ann.},
  volume={303},
  date={1995},
  number={3},
  pages={389--416},
  issn={0025-5831},
  review={\MR {1354997}},
  doi={10.1007/BF01460997},
}

    }
  \end{biblist}
\end{bibdiv}

\end{document}

%%% Local Variables:
%%% mode: latex
%%% TeX-master: t
%%% coding: utf-8
%%% End: